\documentclass{amsart}

\usepackage{amsmath}
\usepackage{amssymb}
\usepackage{mathrsfs}
\usepackage{url}
\usepackage{enumitem}
\usepackage{stmaryrd}
\usepackage[all]{xy}
\usepackage{verbatim}

\newtheorem{Thm}{Theorem}[section]
\newtheorem{Con}[Thm]{Conjecture}
\newtheorem{Cor}[Thm]{Corollary}
\newtheorem{Def}[Thm]{Definition}

\newtheorem{Lem}[Thm]{Lemma}
\newtheorem{Prop}[Thm]{Proposition}
 
\newtheorem{Rmk}[Thm]{Remark}
\newtheorem{Prob}[Thm]{Problem}
\newtheorem{Question}[Thm]{Question}
\numberwithin{equation}{Thm}

\newcommand{\FF}{\mathbb{F}}

\newcommand{\RR}{\mathbb{R}}

\newcommand{\ZZ}{\mathbb{Z}}

\DeclareMathOperator{\Sing}{Sing}

\date{\today}

\author{Daqing Wan}
\address{Center for Discrete Mathematics, Chongqing University, Chongqing 401331, China.}
\address{College of Math. and Statistics, Chongqing University, Chongqing 401331, China.}
\email{dwan@math.uci.edu}

\title{NP-hardness of SVP in Euclidean space}

\begin{document}

\begin{abstract}
In 1981, van Emde Boas conjectured that computing a shortest non-zero vector of a lattice in a Euclidean space is $\mathbf{NP}$-hard. In this paper, we prove this conjecture, thereby derandomizing Ajtai's classical randomized hardness result (1998). We follow the derandomization program formulated by Micciancio (1998--2014) 
who conjectured the existence of an efficient deterministic construction of locally dense lattices. The key is to resolve this conjecture. 
Our proof builds on the candidate construction via Reed-Solomon codes by Bennett and Peikert (2023), and depends crucially on Deligne's work on the Weil conjectures for higher-dimensional varieties over finite fields.

\end{abstract}

\maketitle

%\section{Introduction}

\section{Introduction} 

A lattice $\mathcal{L}$ in the $n$-dimensional Euclidean space $\RR^n$ is the set of all integer linear combinations of $m$ linearly independent column vectors ${\bf v}_1, \dots, {\bf v}_m$ in $\ZZ^n$, where $m\leq n$ is called the dimension of the lattice $\mathcal{L}$. Note that we only consider integral lattices in $\RR^n$. The matrix $B = ({\bf v}_1, \dots, {\bf v}_m)$ is called a basis of $\mathcal{L}$. The lattice $\mathcal{L}$ generated by $B$ is defined as 
\[ \mathcal{L} = \mathcal{L}(B) := \left\{ \sum_{i=1}^m a_i {\bf v}_i : a_1, \dots, a_m \in \ZZ \right\}. \]

Lattices are classically studied mathematical objects that originated from the pioneering work of Gauss \cite{G1801} over two centuries ago. They have proved to be invaluable in numerous computer science applications, including integer programming \cite{Len83, Kan87}, coding theory \cite{CS99}, cryptanalysis \cite{LO85, Cop01, Bon99}, and, most notably, lattice-based cryptography \cite{Ajt96, MG02, Pei16}, which offers the critical advantage of resisting known efficient quantum attacks. Indeed, the security of lattice-based cryptography fundamentally relies on the hardness of computational lattice problems, making this a highly active research area in both theoretical computer science and cryptography. 

The central algorithmic problem in lattice theory is the Shortest Vector Problem (SVP): given a lattice basis $B$ as input, find a shortest non-zero vector in $\mathcal{L}(B)$. The fundamental question here is 
\begin{Question}
Is there a polynomial-time algorithm to solve $\mathrm{SVP}$?
\end{Question}
The celebrated LLL algorithm \cite{LLL82} and its subsequent improvements find an approximate short lattice vector that may be exponentially longer than the true shortest non-zero vector. Currently, all algorithms solving exact $\mathrm{SVP}$ have exponential worst-case running time.

From its introduction, SVP was widely believed to be computationally intractable for large dimensions $n$. Nevertheless, its precise complexity has remained a long-standing open problem in theoretical computer science; see \cite{Ben23, Zon25, Men26} for recent surveys on this subject and its foundational applications to post-quantum cryptography. 

\begin{Con}[van Emde Boas \cite{vEB81}]\label{Con1.0} 
{\rm SVP} is {\bf NP}-hard. 
\end{Con}

This conjecture implies ${\rm SVP}$ is at least as hard as the most difficult problems in ${\bf NP}$, 
showing that ${\rm SVP}$ has no polynomial time algorithm under the standard assumption that ${\bf NP} \neq {\bf P}$, thereby answering the fundamental algorithmic question regarding SVP. 

Almost twenty years after van Emde Boas's pioneering work, a major breakthrough was made by Ajtai \cite{Ajt98}, who proved that SVP is {\bf NP}-hard under randomized polynomial-time reduction, by reducing the {\bf NP}-complete subset sum problem to ${\rm SVP}$. This implies that 
${\rm SVP}$ has no polynomial time algorithm under the stronger assumption that ${\bf NP} \neq {\bf RP}$.  

To prove van Emde Boas's conjecture in full, one must derandomize Ajtai's randomized polynomial-time reduction. Numerous authors have attempted this over the years. In particular, Micciancio \cite{Mic98} simplified Ajtai's sophisticated construction and formalized a derandomization approach via the lifting of 
a linear code and by reducing the {\bf NP}-hard Closest Vector Problem (CVP) to ${\rm SVP}$. This  assumes the existence of an efficient, deterministic construction of a certain combinatorial gadget: a locally dense lattice with an explicit bad center (see \cite[Conjecture 1]{Mic14}). Micciancio proposed two strategies to construct this gadget efficiently. The first \cite{Mic98} relies on the conjectural distribution of smooth numbers in very short intervals; however, this number-theoretic conjecture remains far out of reach, even assuming the Generalized Riemann Hypothesis. The second \cite{Mic12} attempts to construct the gadget using the lifting of a tower of binary BCH codes, which has also proven difficult to fully realize. 

More recently, Bennett and Peikert \cite{BP23} proposed a third approach, constructing the gadget by lifting Reed-Solomon codes over large prime finite fields. This method appears simpler and more amenable to derandomization, as its coding-theoretic analogue \cite{DMS99}—the construction of a similar gadget using Reed-Solomon codes over large finite fields—was successfully derandomized over a decade ago by Cheng and Wan \cite{CW09} using Weil's bound for character sums on curves (see also \cite{KA11, Mic14} for subsequent simpler proofs). However, as demonstrated in \cite{BP23}, a direct application of the Weil bound fails to yield useful information in the lattice setting. Indeed, derandomization in the lattice setting is far more profound, requiring techniques that go well beyond the Weil bound. In this paper, we show that this barrier can be overcome by appealing to some of the deepest theorems in arithmetic geometry, specifically utilizing the full strength of Deligne's theorem on the Weil conjectures. Our main result is:

\begin{Thm}\label{Thm1.1} 
$\mathrm{SVP}$ is {\bf NP}-hard. 
\end{Thm}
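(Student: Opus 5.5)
Our plan follows Micciancio's derandomization framework \cite{Mic98, Mic14}. We reduce the \textbf{NP}-hard approximate Closest Vector Problem (CVP, in the variant where the target lies at distance at most $d$ and the instance solutions have bounded coefficients) to the promise version of SVP within some constant factor $\gamma>1$ in the $\ell_2$ norm. By \cite[Conjecture 1]{Mic14}, it is enough to build, deterministically and in polynomial time, a \emph{locally dense lattice with an explicit bad center}. Concretely, we need a lattice $\mathcal{L}\subset\ZZ^n$, a center ${\bf s}$, a radius $r$ with $\lambda_1(\mathcal{L})\ge \gamma' r$ for a fixed $\gamma'>\sqrt 2$, and a projection $T$. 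These must satisfy two conditions: the ball $B({\bf s},r)$ contains exponentially many lattice points, and $T$ maps those points onto $\{0,1\}^k$. Given this gadget, Micciancio's embedding combines it with the CVP instance and produces the SVP instance deterministically. This part is standard, so the whole task is to construct the gadget.

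For the gadget we use the Bennett--Peikert construction \cite{BP23}. Take a large prime $p$ and distinct elements $a_1,\dots,a_n\in\FF_p$, and lift the Reed--Solomon code
\[
\mathcal{L}=\{{\bf x}\in\ZZ^n:\ \textstyle\sum_i x_i a_i^j\equiv 0 \pmod p,\ 0\le j<h\}.
\]
The minimum distance comes from Newton's identities: a short nonzero vector gives a multiset of $a_i$ on which the first $h$ power sums agree, which forces $\|{\bf x}\|_2^2\ge 2h$ or so. Take the center ${\bf s}=\beta(1,\dots,1)/2$ type, or shift it by a chosen target syndrome. Lattice points near ${\bf s}$ then correspond to $0/1$ vectors ${\bf x}$ of weight $w$ whose support $S$ has prescribed power sums $e_j(S)$ or $p_j(S)$ for $j<h$. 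Equivalently, $\prod_{i\in S}(X-a_i)$ must have prescribed top $h$ coefficients, so we are counting $w$-subsets of $\{a_i\}$ whose monic polynomial lies in a given affine family.

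The heart of the proof, and in our view the main obstacle, is to show deterministically that the number of such subsets is close to its expected value $\binom{n}{w}p^{-h}$ for an explicit choice of $\{a_i\}$ and the target, with $w$ and $h$ proportional and $h\approx w/\gamma'^2$. For this we take $\{a_i\}=\FF_p$ or a subfield or subgroup, and write the count as a character sum over $\FF_p^w$:
\[
\frac{1}{w!}\#\{(x_1,\dots,x_w)\in\FF_p^w \text{ distinct}:\ p_j(x)=c_j,\ j<h\}.
\]
These points are the $\FF_p$-points of an affine complete intersection $V$ of dimension $w-h$. If $V$ is geometrically irreducible with a singular locus of small dimension $\delta$, then Deligne's theorem, via Katz/Hooley-type estimates, gives $\#V(\FF_p)=p^{w-h}+O(C\,p^{(w-h+\delta+1)/2})$. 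Here $C$ is an explicit Betti-number bound of size roughly $(h!)^{O(w)}$. The Weil bound on curves is not enough for this, since we need full square-root cancellation in dimension $w-h$.

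We therefore expect the main steps to be as follows. (i) Analyze the singular locus of the power-sum variety: a point is singular when the Jacobian has Vandermonde rank less than $h$, which means at most $h-1$ distinct coordinates, so $\delta\le h-1$. (ii) Remove the diagonals by inclusion–exclusion. (iii) Choose $p$ as a polynomial in $n$ and $w,h=\Theta(\log p)$ with the right constants, so that the error term is dominated by the main term and the count is at least $2^{\Omega(k)}$. After that, the combinatorial lemma of \cite{Mic98}, or an explicit choice of $T$ using the structure of the $0/1$ solutions, produces the projection onto $\{0,1\}^k$. Balancing the Betti-number constant against the exponent savings $p^{(w-h-\delta-1)/2}$ will be the delicate step. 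If $\delta\le h-1$ is too weak there, we would restrict to points with all coordinates distinct, which should give $\delta=-1$ on that open set, and apply Deligne to the compactly supported cohomology of that smooth open subvariety.
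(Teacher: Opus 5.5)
Your skeleton matches the paper: Micciancio's gadget reduction, the Bennett--Peikert lattice with a binary coset representative as bad center, a Deligne-type count on the power-sum complete intersection, and inclusion--exclusion for the diagonals. However, the step you call the heart of the proof fails as you set it up.

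The regime is backwards. The gadget needs $\lambda_1\ge\gamma' r$ with $1<\gamma'<\sqrt2$. For $\gamma'>\sqrt2$, a ball of radius $r$ contains at most $n+1$ lattice points, because their differences from the center have pairwise negative inner products. In your notation ($h$ checks, weight $w$, $\lambda_1\ge\sqrt{2h}$), this forces $h<w<2h$. Your choice $h\approx w/\gamma'^2<w/2$ is the easy (Weil-bound) regime, where $\sqrt w$ exceeds the only available lower bound $\sqrt{2h}$ on $\lambda_1$.

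In the correct regime, your bound $\delta\le h-1$ gives an error of order $p^{w/2}>p^{w-h}$, i.e.\ nothing. Besides, Hooley--Katz needs the singular locus of the projective closure, including points at infinity, which you never examine.

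Your fallback fails too. For a smooth non-proper variety, Deligne only says $H^i_c$ has weights $\le i$, and the groups in degrees between $\dim$ and $2\dim-1$ can carry such weights. So smoothness of the distinct-coordinate locus yields only a Lang--Weil-type saving of $p^{1/2}$, which is hopeless against the Betti constant.

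The missing idea is the geometry at infinity. In the paper's notation ($k$ checks, weight $h$, prime $q$):
\begin{enumerate}
\item The closure $\sum_i x_i^j=h_jx_{h+1}^j$ in $\mathbb{P}^h$ is a complete intersection: adjoining the power sums up to degree $h$ leaves finitely many points, by Newton's identities.
\item Its section at infinity, $\sum_i x_i^j=0$, is \emph{smooth}. A rank-deficient Jacobian forces at most $k-2$ distinct values, and a Vandermonde system then gives $m_iz_i=0$ with multiplicities $0<m_i<q$, a contradiction.
\item By the Zak--Katz hyperplane-section lemma, the closure therefore has at most isolated singularities.
\item Hooley--Katz with $s=0$ then gives error $O(B\,q^{(h-k+2)/2})$ against the main term $q^{h-k+1}$, where $B$ is the total Betti number.
\end{enumerate}

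The sizes also fail. A polynomial-time reduction needs $2^r$ points with $n=\mathrm{poly}(r)$, i.e.\ $2^{n^{\Omega(1)}}$ points. Your choice $w,h=\Theta(\log p)$ gives at most $\binom{n}{w}=2^{O(\log^2 n)}$. The number of checks must be polynomial in $q$; the paper takes $k\approx q^{\epsilon_1}$. Then the Betti constant must be singly exponential with polynomial base, like the bound $\frac12(2k)^h$ the paper uses or Katz's $3(k+1)^{h+k-1}$. Your estimate $(h!)^{O(w)}$ has logarithm of order $k^2\log k$ in the paper's notation. That swamps the saving $q^{(h-k)/2}$, whose logarithm is only of order $\epsilon k\log q$. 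Balancing $(2k)^h\le q^{\epsilon_1(1+\epsilon)k}$ against $q^{\epsilon k/2}$ is exactly what forces $\epsilon_1<\epsilon/(2(1+\epsilon))$.

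Finally, Micciancio's combinatorial lemma chooses the projection $T$ at random, which reintroduces the randomness you are trying to remove, and your ``explicit $T$'' is unspecified. The paper takes $T$ to be the projection onto the first $r$ coordinates. It proves every fiber is nonempty by rerunning the count, uniformly over $x\in\{0,1\}^r$ of weight $t$, on $\sum_{i\le h-t}z_i^j=h_j-\sum_{i\le r}a_i^jx_i$. Distinctness and $z_i\notin\{a_1,\dots,a_r\}$ are imposed by inclusion--exclusion. This is where $r=\lfloor q^\delta\rfloor$ with $\delta<\epsilon_1$ enters.
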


This settles Conjecture \ref{Con1.0}. The Shortest Vector Problem can be naturally generalized to the $\ell_p$ norm for any $1 \leq p \leq \infty$, rather than being restricted to the Euclidean norm ($p=2$). Furthermore, while the aforementioned SVP is formulated as a search problem, to prove its hardness, it suffices to establish the hardness of its decision version. Following standard literature, we investigate the more general $\gamma$-approximate decision problem in the $\ell_p$ norm, denoted by $\gamma\text{-}\mathrm{GapSVP}_p$, where $1 \leq p \leq \infty$, and the approximation factor satisfies $\gamma \geq 1$ (which may be a constant or a function of the dimension $n$). Recall that the $\ell_p$ norm in $\RR^n$ is defined by 
\[ \|(x_1, \dots, x_n)\|_p = \left(\sum_{i=1}^n |x_i|^p\right)^{1/p}, \quad \|(x_1, \dots, x_n)\|_{\infty} = \max_{1 \leq i \leq n} |x_i|. \]

The case $p=2$ corresponds to the Euclidean norm, which is of paramount importance in applications, especially in lattice-based cryptography. One motivation for studying the $p \neq 2$ case is that the problem structurally tends to become more tractable as $p$ increases. Indeed, for $p=\infty$, van Emde Boas \cite{vEB81} established that SVP is ${\bf NP}$-hard in the $\ell_{\infty}$ norm. Additionally, one hopes that techniques developed to resolve the large $p$ regime can eventually be adapted to handle smaller values of $p$. 

We now formalize the approximation version and its $\ell_p$-norm analogue for $1 \leq p \leq \infty$. A fundamental geometric parameter of any lattice is its minimum distance. 

\begin{Def} 
The $\ell_p$-norm minimum distance of a lattice $\mathcal{L}$ in $\RR^n$ is 
\[ \lambda^{(p)}(\mathcal{L}) = \min_{v \in \mathcal{L} \setminus \{0\}} \|v\|_p. \]
\end{Def}

The $\gamma$-approximate decision version is formulated as the following promise problem, where the approximation factor $\gamma$ is assumed to satisfy $\gamma \geq 1$. 

\begin{Prob}[$\gamma\text{-}\mathrm{GapSVP}_p$] Fix $1\leq p < \infty$ and constant $\gamma \geq 1$. 
Given a lattice basis $M$ and a distance threshold $d>0$, decide whether $\lambda^{(p)}(\mathcal{L}) \leq d$ or $\lambda^{(p)}(\mathcal{L}) > \gamma d$ when one of the two cases is promised to hold. 
\end{Prob}

For the exact problem ($\gamma = 1$), we write $\mathrm{GapSVP}_p$. Let $\mathrm{SVP}_p$ denote the search problem of computing a shortest non-zero lattice vector in the $\ell_p$ norm; thus, $\mathrm{SVP}$ is simply $\mathrm{SVP}_2$. If $\mathrm{GapSVP}_p$ is ${\bf NP}$-hard, then $\mathrm{SVP}_p$ is also ${\bf NP}$-hard. Since van Emde Boas \cite{vEB81} proved that $\mathrm{GapSVP}_{\infty}$ is ${\bf NP}$-hard, it is natural to expect that his conjecture extends from $p=2$ to all finite $p \geq 1$. 

\begin{Con}\label{Con2.0} 
For every $1\leq p < \infty$, $\mathrm{GapSVP}_p$ is {\bf NP}-hard. 
\end{Con}

This generalized conjecture has been studied extensively. As mentioned earlier, the first major breakthrough was established by Ajtai for the classical Euclidean case $p=2$. 

\begin{Thm}[Ajtai \cite{Ajt98}] 
$\mathrm{GapSVP}_2$ is {\bf NP}-hard under randomized polynomial-time reductions. 
\end{Thm}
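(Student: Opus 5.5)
We follow the route that Micciancio \cite{Mic98} later distilled from Ajtai's argument. Ajtai's own reduction from subset sum is the original proof; the reduction below is different but gives the same statement. We reduce from the closest vector problem $\mathrm{GapCVP}_2$, which is $\mathbf{NP}$-hard under \emph{deterministic} reductions (van Emde Boas \cite{vEB81}), even restricted to instances $(B,{\bf t},d)$ in which the YES witness ${\bf x}$ with $\|B{\bf x}-{\bf t}\|_2\le d$ may be taken in $\{0,1\}^k$. All randomness is confined to building a single gadget.

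\textbf{The gadget.} We use a \emph{locally dense lattice}: a lattice $\mathcal{L}(L)\subset\ZZ^m$, a center ${\bf s}$, a radius $r$, an approximation constant $\alpha<\sqrt2$ (we actually need $\alpha$ close to $1$), and an integer matrix $T$ with the following properties.
\begin{enumerate}[label=(\roman*)]
\item $\lambda^{(2)}(\mathcal{L}(L))>\alpha r$.
\item The ball of radius $r$ around ${\bf s}$ contains lattice vectors $L{\bf z}$ whose images satisfy $T{\bf z}\supseteq\{0,1\}^k$.
\end{enumerate}

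\textbf{The reduction.} Given such a gadget, we form the SVP lattice with basis
\[
\begin{pmatrix} \beta B T & -\beta{\bf t}\\ L & -{\bf s}\end{pmatrix},
\]
with scaling $\beta$ chosen so that $\beta^2 d^2+r^2$ is the threshold.

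\emph{Completeness.} Given a $0/1$ witness ${\bf x}$, pick ${\bf z}$ with $T{\bf z}={\bf x}$ and $\|L{\bf z}-{\bf s}\|\le r$. Then $({\bf z},1)$ gives a short vector.

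\emph{Soundness.} Take any nonzero $({\bf z},w)$. If $w=0$, property (i) forces length $>\alpha r$. If $w\ne0$, the top block is $\beta\|B(T{\bf z})-w{\bf t}\|$. For a NO instance of a suitably gap-amplified CVP (one that also rules out multiples $w{\bf t}$) this is large. A standard choice of $\beta$ and of the CVP gap then separates the two cases by a constant factor $\gamma>1$. This already yields the exact $\mathrm{GapSVP}_2$ claim.

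\textbf{Randomized construction of the gadget.} This is the main step. Take $L$ to be a Construction-A lift of a Reed--Solomon code over $\FF_q$ ($q$ prime), as in \cite{BP23}:
\[
\mathcal{L}=\{{\bf v}\in\ZZ^n : {\bf v}\bmod q\in C\},\qquad C=\{(f(a_1),\dots,f(a_n)) : \deg f<k'\}.
\]
\begin{itemize}
\item \emph{Minimum distance.} A short nonzero vector either is $\equiv0\pmod q$, hence has norm $\ge q$, or has Hamming weight $\ge n-k'+1$, hence has $\ell_2$ norm $\ge\sqrt{n-k'+1}$. This gives (i) with $r\approx\sqrt{h}$ for $h$ somewhat smaller than $n-k'$.
\item \emph{Density.} Consider $0/1$ vectors of weight $h$ lying in $\mathcal{L}+{\bf s}$ for a shift ${\bf s}$. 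There are $\binom nh$ such weight-$h$ vectors and only $q^{n-k'}$ cosets. Averaging over a \emph{uniformly random} coset ${\bf s}$ therefore shows that, with good probability, the ball around ${\bf s}$ contains at least $\binom nh/q^{n-k'}$ lattice points. Choosing parameters with $q^{n-k'}\ll\binom nh$ makes this superpolynomially large.
\item \emph{Covering $\{0,1\}^k$.} Pass from "many points" to "$T$ maps onto all of $\{0,1\}^k$" with Micciancio's probabilistic lemma, a Sauer--Shelah type argument: a random $0/1$ linear map $T$ of suitable sparsity sends any sufficiently large set of weight-$h$ vectors onto $\{0,1\}^k$ with high probability.
\end{itemize}
These two random choices, the coset ${\bf s}$ and the map $T$, make the reduction randomized. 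Both are polynomial-time samplable with one-sided error, which gives $\mathbf{NP}$-hardness under randomized reductions.

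\textbf{Expected obstacle.} The delicate point is the parameter balancing in the density step. We need $\alpha r$ to stay below $\lambda$ while the count $\binom nh q^{-(n-k')}$ remains superpolynomial. If the Reed--Solomon parameters prove too tight, we would instead fall back on Micciancio's smooth-number lattice, where the ratio $\lambda/r$ approaches $\sqrt2$ and the same averaging over a random center applies.
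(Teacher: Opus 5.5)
\textbf{There is a genuine gap.} Your architecture is the right one. It is the framework the paper itself uses: a $0/1$-witness GapCVP instance embedded via a locally dense gadget (Theorem~\ref{thm:reduction}), with the gadget a lifted Reed--Solomon code as in \cite{BP23}. The paper makes every ingredient explicit, so Ajtai's randomized statement follows from its main theorem a fortiori. However, the gadget analysis you give cannot succeed, and it fails exactly at the step you flag.

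\textbf{Why the density step fails.} Write $m=n-k'$ for the number of parity checks, so $\mathcal{L}$ has $q^m$ cosets in $\ZZ^n$. The only lower bound you prove is $\lambda^{(2)}(\mathcal{L})\ge\sqrt{m+1}$. Keeping $r=\sqrt h$ below it forces $h\le m$, and then, since $n\le q$,
\[
\binom{n}{h}\Big/q^{m}\;\le\;\frac{n^{h}}{h!\,q^{m}}\;\le\;\frac{1}{h!}.
\]
So the average coset contains fewer than one weight-$h$ binary vector, and no parameter choice gives $q^{m}\ll\binom nh$. Counting all integer vectors of squared norm at most $m$ does not help either: there are at most $2^m\binom{n+m}{m}\le 4^m q^m/m!$ of them, an average of at most $4^m/m!$ per coset.

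\textbf{The missing idea} is the Bennett--Peikert doubling of the minimum distance, which the paper reproduces as Lemma~\ref{lem:BP-min-dist}.
\begin{enumerate}[label=(\alph*)]
\item Take $n=q$, so that the parity checks of your code are exactly the power sums $\sum_i v_i a_i^j=0$ for $0\le j\le m-1$. For $n<q$ the dual is a generalized Reed--Solomon code, and the $j=0$ check is no longer $\sum_i v_i=0$.
\item For $m\le q/2$, the $j=0$ check forces the positive and negative parts of a lattice vector of $\ell_1$-norm $<2m$ to be multisets of equal size $<m$.
\item The remaining checks plus Newton's identities force these two multisets to coincide, hence to be empty. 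So $\lambda^{(2)}(\mathcal{L})\ge\sqrt{2m}$.
\end{enumerate}
This allows $h=\lfloor(1+\epsilon)m\rfloor>m$ while keeping $\lambda^{(2)}/r\ge\sqrt{2/(1+\epsilon)}>1$. The average count becomes roughly $q^{\epsilon m}/h!$, which is superpolynomial for $m\approx q^{\epsilon_1}$ with $\epsilon_1$ a small enough constant. Your fallback to Micciancio's smooth-number lattice \cite{Mic98} would work in principle, but as stated it is a pointer to a different proof, not an argument. So the proposal does not establish the gadget.

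\textbf{A smaller fix in the sampling.} Averaging over a uniformly random coset controls only the expected number of short vectors, not a lower bound that holds with good probability. Instead, take the center to be a uniformly random weight-$h$ binary vector. Cosets containing fewer than $\delta\binom nh/q^m$ weight-$h$ vectors jointly contain fewer than $\delta\binom nh$ of them. Hence, with probability at least $1-\delta$, the sampled center lies in a coset with at least $\delta\binom nh/q^m$ such vectors, which is the kind of large set Micciancio's probabilistic covering lemma takes as input.

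With these two repairs your plan becomes the randomized proof of \cite{BP23}. The paper goes further by replacing the random center with the explicit $\mathbf{y}$ and the random map with the coordinate projection; that is where Deligne's theorem becomes necessary.
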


Ajtai's hardness result for the exact problem ($\gamma=1$) was subsequently extended to increasingly larger constant approximation factors $\gamma \geq 1$ (see, e.g., \cite{CN98, Mic98, Kho03, Kho04, RR06, HR07, Mic12}). It is known that for all $p \geq 1$, $\gamma\text{-}\mathrm{GapSVP}_p$ is ${\bf NP}$-hard for any constant $\gamma \geq 1$ under randomized reductions. It remains hard even for nearly polynomial factors $\gamma = n^{\Omega(1/\log\log n)}$, under randomized subexponential-time reductions. 

Note that for $p \geq 2$, $\gamma\text{-}\mathrm{GapSVP}_p$ is unlikely to be ${\bf NP}$-hard for large factors $\gamma \geq c_p\sqrt{n}$, where $c_p > 0$ is a constant depending only on $p$ (see \cite{GG98, AR04, Pei07}). Conversely, the security of lattice-based cryptography often relies on the conjectural hardness of $\gamma\text{-}\mathrm{GapSVP}_p$ for much larger approximation factors $\gamma$, typically polynomial in $n$. This discrepancy suggests that proving standard ${\bf NP}$-hardness is unlikely for the parameter regimes of interest in cryptography. In particular, finding a short non-zero lattice vector $v$ satisfying $\|v\|_2 \leq \sqrt{m} |\det(B^tB)|^{1/2m}$ (known as Minkowski-SVP) is expected to be computationally hard, but likely not ${\bf NP}$-hard. 

Derandomizing these hardness results has been a long-standing open problem. Historically, the primary framework for derandomization was the construction of locally dense lattices via coding theory as formulated by Micciancio. More recently, alternative approaches based on probabilistically checkable proofs (PCPs) and subset-sum variants have also proved highly fruitful. For $p > 2$, powerful deterministic hardness results have been established in several recent works. Most notably, Hair and Sahai \cite{HS25*} showed that for any $p>2$ and any $\epsilon>0$, $\gamma\text{-}\mathrm{GapSVP}_p$ is ${\bf NP}$-hard for $1 \leq \gamma < 2^{\log^{1-\epsilon}n}$. This establishes the deterministic ${\bf NP}$-hardness of ${\rm SVP}_p$ for all $p>2$, settling Conjecture \ref{Con2.0} in the $p>2$ regime. However, the PCP-based approach of \cite{HS25*} currently cannot prove Conjecture \ref{Con2.0} in the  $1 \leq p \leq 2$ regime, which includes the practically critical Euclidean case $p=2$. 

In Hecht and Safra \cite[Theorem 1.1]{HS25}, also utilizing PCP techniques, it is shown that for any constant $p>2$ and constant $1 \leq \gamma < \sqrt{2}$, $\gamma\text{-}\mathrm{GapSVP}_p \notin {\bf P}$ unless $\mathrm{3SAT} \in \mathbf{DTIME}(2^{O(n^{2/3}\log n)})$. While this provides a deterministic hardness result, it relies on a sub-exponential time reduction (rather than a polynomial-time reduction) and remains restricted to $p>2$. Hittmeir \cite{Hit25} proved a fine-grained deterministic hardness result for $\gamma\text{-}\mathrm{GapSVP}_p$ parameterized by $p$, via a reduction to a variant of the subset-sum problem. Crucially, none of these recent techniques have succeeded in establishing deterministic ${\bf NP}$-hardness for $1 \leq p \leq 2$, most notably leaving the $p=2$ case open. 

In a recent preprint, Hair and Sahai \cite{HS26**} independently achieved a breakthrough by proving a deterministic hardness result for all $1 \leq p < \infty$ via the PCP framework. Their reduction is sub-exponential in time and thus does not settle the deterministic ${\bf NP}$-hardness for the $1 \leq p \leq 2$ regime. 

We now focus on the development of locally dense lattices, which is the framework adopted in this work. Classically, derandomizing Ajtai's result (for $p=2$) has been approached by reduction to the Closest Vector Problem (CVP), which has long been known to be ${\bf NP}$-hard \cite{vEB81}. In \cite{Mic98, Mic12}, this derandomization is reduced to constructing a specific combinatorial gadget. Roughly speaking, this gadget consists of a locally dense lattice $\mathcal{L}$ and an explicit coset $s + \mathcal{L}$ containing subexponentially many short vectors whose norms are at most a fraction of the lattice's minimum distance. Consequently, list decoding is infeasible for the ``bad center" $s$ at this error radius. While constructing such a locally dense lattice $\mathcal{L}$ is generally not very difficult, and a random choice of $s$ yields a bad center with high probability (giving a randomized polynomial-time reduction), derandomization requires an explicit, deterministic construction of the bad center $s$. Although the bad center can easily be guessed, the fundamental difficulty lies in proving that this candidate indeed satisfies the necessary properties. This is where a rigorous mathematical proof becomes necessary. 

In \cite{Mic98}, Micciancio proposed a number-theoretic approach to explicitly construct $\mathcal{L}$ and $s$. However, this framework relies on the unproven conjecture that for any $\epsilon > 0$, there exists $d > 0$ such that the short interval $[n, n + n^{\epsilon}]$ contains an odd, square-free, $\log^d n$-smooth integer for all sufficiently large $n$. This number-theoretic conjecture is currently far out of reach, even assuming the Generalized Riemann Hypothesis. In \cite{Mic12}, a more sophisticated method was proposed based on a tower of binary BCH codes. This construction has the advantage of yielding a randomized reduction with only one-sided error, unlike the two-sided error reductions in \cite{Kho03, HR07}. Thus, the framework in \cite{Mic12} partially derandomized these reductions and suggested a viable path toward full derandomization. However, despite the elegance of this coding-theoretic connection, the binary field $\FF_2$ is too small to permit the application of powerful tools from arithmetic geometry. 

A few years ago, Bennett and Peikert \cite{BP23} introduced a third approach to construct the desired lattice gadget by lifting Reed-Solomon codes over large prime finite fields, which notably provided a much simpler proof of Ajtai's randomized hardness result. This Reed-Solomon lattice framework, though originally randomized, appeared highly amenable to derandomization. Indeed, the analogous gadget in coding theory—known as a locally dense code—was deterministically constructed by Cheng and Wan \cite{CW09} using Weil's bound for character sums. That coding-theoretic result established the deterministic ${\bf NP}$-hardness of approximating the minimum distance of a linear code, thereby derandomizing the main result of \cite{DMS99}. However, derandomizing the lattice analogue in \cite{BP23} requires proving that the proposed bad center of the Reed-Solomon lattice (which is a lifting of a Reed-Solomon code, rather than a code itself) is indeed valid. This task is significantly more challenging: as discussed in \cite{BP23}, the classical Weil bound is insufficient to yield any non-trivial bounds in the lattice setting. 

In this work, we demonstrate that the much deeper Deligne bound for higher-dimensional varieties can be successfully applied to prove that the proposed bad center is indeed valid and satisfies the required projection properties. This resolves Micciancio's open conjecture \cite[Conjecture 1]{Mic14} on the efficient deterministic construction of locally dense lattices. As a direct consequence, we obtain the following:

\begin{Thm}\label{Thm3} 
For every $1\leq p < \infty$, $\gamma\text{-}\mathrm{GapSVP}_p$ is {\bf NP}-hard for all $1\leq \gamma < 2^{1/p}$. 
\end{Thm}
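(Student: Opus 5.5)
Theorem \ref{Thm3} follows from Micciancio's reduction framework \cite{Mic98, Mic12, Mic14} once a locally dense lattice with an explicit bad center can be built deterministically, which is exactly \cite[Conjecture 1]{Mic14}. Concretely, I would build a gadget $(\mathcal{L}, s, T, r)$ in deterministic polynomial time: a lattice $\mathcal{L}$ with $\lambda^{(p)}(\mathcal{L}) \geq r$; a center $s$ such that the coset $s+\mathcal{L}$ contains a huge set $V$ of vectors of $\ell_p$ norm at most $\alpha r$, with $\alpha$ close to $2^{-1/p}$; and an integer matrix $T$ with $T(V) \supseteq \{0,1\}^k$. The last condition is the projection property. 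Given such a gadget, the standard embedding of a $\mathrm{GapCVP}'$ instance (known to be $\mathbf{NP}$-hard deterministically, with the target given over $\{0,1\}$ coefficients) into an SVP instance yields $\gamma\text{-}\mathrm{GapSVP}_p$ hardness for every $\gamma < 1/\alpha$. Letting $\alpha \to 2^{-1/p}$ gives the range $\gamma < 2^{1/p}$. I would take this reduction step from \cite{Mic12, BP23} essentially verbatim.

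For the gadget I follow \cite{BP23}, lifting a Reed--Solomon code over a prime field. Take a prime $q$ and evaluation points $a_1,\dots,a_n \in \FF_q$, and let $C$ be the Reed--Solomon code of dimension about $n-h$. Set $\mathcal{L} = C + q\ZZ^n$ (Construction A), or its dual variant. A nonzero vector of $\mathcal{L}$ either has a nonzero reduction mod $q$, so it has weight at least $h$ and $\ell_p$ norm at least $h^{1/p}$, or it lies in $q\ZZ^n$, which has norm at least $q$. Choosing $q \geq h^{1/p}$ gives $\lambda^{(p)} \geq h^{1/p}$.

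The short vectors in $s+\mathcal{L}$ will be $0/1$ vectors $\mathbf{1}_S$ with $|S| = h/2$, ideally slightly fewer, so their norm is about $(h/2)^{1/p}$, which gives $\alpha \approx 2^{-1/p}$. The condition $\mathbf{1}_S \in s + \mathcal{L}$ says that the power sums $\sum_{i\in S} a_i^j$, for $j < h$, equal prescribed values $t_j$ determined by $s$. Following \cite{BP23}, I would take the explicit center where the $t_j$ are the power sums of a fixed field element $b$ together with its multiplicities, or equivalently say that $\prod_{i\in S}(x-a_i)$ has prescribed top coefficients. The projection $T$ reads off selected coordinates of $S$. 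The claim to prove is that for every pattern in $\{0,1\}^k$, some subset $S$ realizing the prescribed symmetric functions agrees with that pattern.

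This is the main obstacle. It amounts to counting $\FF_q$-points on the variety $X$ of $(h/2)$-subsets of the given evaluation set, equivalently monic polynomials that split into distinct linear factors from that set, with the first several coefficients fixed and some roots forced in or out. I would write the count as a character sum over $\FF_q^{m}$, parametrizing $S$ by its elementary symmetric functions and using additive characters to impose the coefficient constraints. The Weil bound on curves is too weak here, as \cite{BP23} note. Instead I would show that the relevant affine variety, a fiber of the map from $\mathrm{Sym}^{h/2}$ to the first coefficients, is geometrically irreducible of the expected dimension with controlled singular locus. Then Deligne's theorem (Weil II), through a Katz-type bound in which the error is $O(D\, q^{\dim X - 1/2})$ with $D$ bounded by the degree, shows that the point count is $q^{\dim X}(1+o(1))$ once $q$ is polynomially large in $h$. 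In particular the count is positive for each of the $2^k$ projection patterns. The delicate points are proving irreducibility and the codimension of the singular locus, which I expect to handle with a discriminant and Jacobian computation in symmetric-function coordinates, and making the error term uniform in the pattern.
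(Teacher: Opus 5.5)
Your reduction step is the same as the paper's, which simply invokes Micciancio's theorem. But the gadget as you set it up fails, and the point-count you propose is too weak even after that is repaired. \emph{First, your parameters are self-defeating.} You use only the Hamming distance of a code with about $h$ parity rows, giving $\lambda^{(p)}(\mathcal{L})\approx h^{1/p}$. You then ask for sets $S$ with $|S|\approx h/2$ whose power sums $\sum_{i\in S}a_i^j$ are prescribed for all $j<h$. Since $|S|<h<q$, Newton's identities show that $p_1,\dots,p_{|S|}$ already determine $S$. Equivalently, two such sets $S\neq S'$ would give a lattice vector $\mathbf{1}_S-\mathbf{1}_{S'}$, nonzero mod $q$, of Hamming weight at most $h$, below the code distance $h+1$. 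So the coset contains at most one such vector, your ``fiber'' is a single point, and no projection property is possible. The same holds for every weight $w\le h-1$, so with the naive Hamming bound you are forced to $\alpha\ge 1$; also, ``slightly fewer than $h/2$'' would give $\alpha<2^{-1/p}$. The missing ingredient is the Bennett--Peikert lemma (Lemma~\ref{lem:BP-min-dist}). For the lift $\mathcal{L}_{q,k}$ of the code with $k$ parity rows and $q\ge 2k$, the $\ell_1$ minimum distance is $2k$, not $k+1$; the proof writes a short lattice vector as a difference of multisets $T^{+},T^{-}$ with equal power sums and applies Newton's identities. This is what allows weight $h=\lfloor(1+\epsilon)k\rfloor$ against only $k-1$ equations, so that $X_{k,h,\mathbf{u}}\subset\mathbb{A}^h$ has positive dimension $h-k+1\approx\epsilon k$ while $\alpha=((1+\epsilon)/2)^{1/p}$.

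\emph{Second, an error term $O(D\,q^{\dim X-1/2})$ cannot work in the required regime.} Here $D$ is governed by the degree $(k-1)!$ of $X_{k,h,\mathbf{u}}$, or by its total Betti number, which the paper bounds only by $\frac12(2k)^h$. A saving of just $q^{1/2}$ therefore forces $k=O(\log q/\log\log q)$. Then there are at most $q^{O(k)}=2^{O(\log^2 q)}$ short vectors, so $r=O(\log^2 q)$ and $q$ is not $\mathrm{poly}(r)$; taking $q$ polynomial in $h$ does not suffice. The paper instead uses the Deligne/Hooley--Katz bound of Proposition~\ref{Del}, with error $q^{(\dim X+1+s)/2}$ where $s=\dim\Sing$, whose saving $q^{(\dim X-1-s)/2}$ grows with the dimension. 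Getting $s\le 0$ is the real geometric content:
\begin{itemize}
\item the closure $\overline{X}_{k,h,\mathbf{u}}$ is a complete intersection, shown by adding power sums up to $j=h$ and using Newton's identities to reach dimension $0$;
\item its section at infinity $\sum_i x_i^j=0$ is smooth: the Jacobian is a scaled Vandermonde, forcing at most $k-2$ distinct coordinates, and a second Vandermonde argument with multiplicities $<q$ rules this out;
\item the Zak/Katz hyperplane-section lemma then gives isolated singularities.
\end{itemize}
The Betti bound $\frac12(2k)^h\le q^{\epsilon_1(1+\epsilon)k}$ is then beaten by the saving $q^{(h-k)/2}\approx q^{\epsilon k/2}$ when $k\approx q^{\epsilon_1}$ with $\epsilon_1<\epsilon/(2(1+\epsilon))$. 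Uniformity over projection patterns is automatic, since these estimates do not depend on $\mathbf{u}$ and the crude inclusion--exclusion costs only polynomial factors. Finally, the count must be done on the ordered-root variety in $\mathbb{A}^h$ and divided by $h!$. In symmetric-function coordinates your fiber is just an affine space whose $\mathbb{F}_q$-points include all non-split polynomials, so a Jacobian computation there says nothing about splitting.
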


\begin{Cor}
For every $1\leq p < \infty$, $\gamma\text{-}\mathrm{GapSVP}_p \notin {\bf P}$ for all $1\leq \gamma < 2^{1/p}$, assuming ${\bf NP} \neq {\bf P}$. 
\end{Cor}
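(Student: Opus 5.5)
We follow Micciancio's reduction framework \cite{Mic98, Mic12, Mic14}: $\gamma\text{-}\mathrm{GapSVP}_p$ for $\gamma<2^{1/p}$ reduces deterministically from the $\mathbf{NP}$-hard exact CVP in $\ell_p$ (e.g.\ a restricted version whose YES instances have $0/1$ solutions of small norm \cite{vEB81, Mic98}), once we have a deterministic polynomial-time locally dense lattice gadget. The gadget consists of three objects. The first is a lattice $\mathcal{L}$ with $\lambda^{(p)}(\mathcal{L}) \geq r$. The second is an explicit center $s$ such that the ball of radius $\alpha r$ around $s$, with $\alpha<2^{-1/p}$ arbitrarily close (this threshold produces the factor $2^{1/p}$), contains exponentially many lattice points. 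The third is an integer matrix $T$ mapping these close points onto all of $\{0,1\}^k$. Given the gadget, the standard embedding of the CVP instance and the gadget into a single block basis finishes the reduction. The soundness and completeness estimates are then routine and are exactly those in \cite{Mic98, BP23}. So the whole proof reduces to Micciancio's \cite[Conjecture 1]{Mic14}, which we resolve deterministically.

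For the lattice we take the Bennett--Peikert construction \cite{BP23}. Fix a prime $q$ polynomial in $n$ and an evaluation set $S=\{a_1,\dots,a_n\}\subset \FF_q$. Let $\mathcal{L}$ be the Construction-A lifting to $\ZZ^n$ of the dual of a Reed--Solomon code, i.e.\ of integer vectors $x$ with $\sum_i x_i a_i^j \equiv 0 \pmod q$ for $0\le j<h$. A nonzero short vector $x$ of small $\ell_p$ norm corresponds to a small multiset of points with vanishing power sums. Newton's identities then force $\prod(1-a_iT)^{x_i}\equiv 1 \bmod T^h$, so $\|x\|_1\geq h$ and $\lambda^{(p)}$ is controlled, essentially as in \cite{BP23}.

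For the bad center we take the explicit candidate $s=(\beta, \beta^2,\dots,\beta^h)$-type syndrome coming from the coordinate associated with a fixed element $\beta\notin S$ (equivalently the syndrome of a degree-one polynomial $T-\beta$). The close lattice vectors then correspond to $0/1$ vectors $x$ of weight $w$ with $\prod_{i: x_i=1}(T-a_i)\equiv$ a prescribed polynomial modulo the top $h$ coefficients. Counting them amounts to counting $w$-subsets of $S$ whose elementary symmetric functions $e_1,\dots,e_h$ take prescribed values, that is, $\FF_q$-points on the variety
\[ V=\{(x_1,\dots,x_w)\in S^w : e_j(x_1,\dots,x_w)=c_j,\ 1\le j\le h\}. \]
Writing the count via additive characters, we get $\frac{1}{q^h}\sum_{\psi}\sum_{x\in S^w}\psi(\sum_j t_j(e_j(x)-c_j))$. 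This is the main obstacle: the Weil bound on curves gives nothing here, as \cite{BP23} observed. Instead we will take $S=\FF_q$ (or a large subfield-free structured subset) so that the inner sums are complete exponential sums of polynomials in $w$ variables. We then apply Deligne's theorem to the Deligne polynomial $f_t(x)=\sum_j t_j e_j(x)$. Its leading form is $t_h e_h$ or a lower $e_j$, and we need to check its nonsingularity, degree prime to $q$, after suitably restricting to distinct coordinates via inclusion--exclusion. This yields $|\sum| \le (\deg-1)^w q^{w/2}$, and hence a main term $q^{w-h}$ that dominates provided $w\ge 2h+O(w\log h/\log q)$.

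Finally, the surjection $T$ onto $\{0,1\}^k$ is obtained by the same counting applied to fibers: fixing the first $k$ coordinates of $x$ in $\{0,1\}$ and re-running the Deligne estimate shows that every pattern occurs. Alternatively, one can invoke the probabilistic-combinatorial lemma of \cite{Mic98} in its deterministic form, given uniform distribution of the close vectors. Choosing $q=n^{O(1)}$, $h$, and $w$ with $w/h$ tending to $2$ from above gives the radius ratio $(w/h)^{1/p}\to 2^{1/p}$ relative to $\lambda^{(p)}$ ($\geq (2h)^{1/p}$ from the parity/Newton argument). Everything is explicit and polynomial-time, which completes the deterministic reduction and hence the proof of Theorem~\ref{Thm3}.
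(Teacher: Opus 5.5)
\textbf{There is a genuine gap: your counting step cannot work in the range the gadget needs.} The corollary itself follows in one line from Theorem~\ref{Thm3}: a polynomial-time algorithm for an $\mathbf{NP}$-hard problem would give $\mathbf{NP}\subseteq\mathbf{P}$. Your reduction of Theorem~\ref{Thm3} to a deterministic locally dense gadget via Theorem~\ref{thm:reduction} is also the paper's framework. But you set out to construct the gadget yourself, and that construction breaks at the counting step.

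You complete with additive characters and bound each nontrivial complete sum over $\FF_q^w$ by $(d-1)^w q^{w/2}$. The total error is then of order $(d-1)^w q^{w/2}$ against a main term $q^{w-h}$, so, as you note, the count is nontrivial only for $w\gtrsim 2h$.

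Your close vectors are $0/1$ vectors of weight $w$, so their $\ell_p$-norm is $w^{1/p}$. The certified minimum distance is only $(2h)^{1/p}$. So the gadget parameter is $\alpha=(w/2h)^{1/p}$, and it must lie in $(2^{-1/p},1)$, i.e.\ $h<w<2h$. To push $\gamma<1/\alpha$ up to $2^{1/p}$ you need $w/h\to 1^{+}$, not $2^{+}$. In the only regime where your estimate says anything, $\alpha>1$ and the gadget is vacuous. This is exactly the Weil barrier ($\epsilon>1$) described in the paper. Your closing ratio $(w/h)^{1/p}\to 2^{1/p}$ mixes the two normalizations, and ``$\alpha<2^{-1/p}$'' has the inequality reversed.

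Deligne's hypothesis is not met either. For $h\ge 3$ the leading form $t_h e_h$ is singular: at $(1,0,\dots,0)$ every partial derivative $e_{h-1}(x_1,\dots,\hat{x}_i,\dots,x_w)$ vanishes. If you switch to power sums, the sum factors as $\bigl(\sum_{a\in\FF_q}\psi(g_t(a))\bigr)^w$, which is just the one-variable Weil bound again. Inclusion--exclusion over coinciding coordinates does not change this. Two smaller problems: a center built from $\beta\notin S$ is incompatible with your later choice $S=\FF_q$, and the projection lemma of \cite{Mic98} is probabilistic, so it cannot serve in a deterministic reduction.

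\textbf{The missing idea} is to apply Deligne's theorem to the solution variety itself, not to exponential sums over the ambient space. Bounding each of the $q^{h}$ character sums separately throws away all cancellation among them. The true error is essentially of square-root size relative to the main term, not relative to $q^{w}$. In the paper's notation (your $w,h$ become $h,k$) the argument runs as follows.
\begin{enumerate}
\item The projective closure $\overline{X}_{k,h,\mathbf{u}}$ of $\{\sum_i x_i^j=h_j,\ 1\le j\le k-1\}$ is a complete intersection of dimension $h-k+1$.
\item Its section at infinity, $\overline{X}_{k,h}$, is smooth by a Vandermonde argument using $h<q$. By Lemma~\ref{singular}, $\overline{X}_{k,h,\mathbf{u}}$ then has at most isolated singularities.
\item Proposition~\ref{Del} (Deligne in the Hooley--Katz form), together with the Betti number bound of \cite{WZ26}, gives $|N_h(q)-q^{h-k+1}|\le\tfrac12(2k)^h q^{(h-k+2)/2}$ (Proposition~\ref{estimate1}).
\item Taking $k\approx q^{\epsilon_1}$ with $\epsilon_1<\epsilon/(2(1+\epsilon))$ absorbs the factor $(2k)^h$ for every $\epsilon>0$. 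This is what permits $h/k=1+\epsilon\to 1^{+}$.
\item The same estimate, applied to the sections $\{x_i=x_j\}$ and to the fiber varieties $Z_x$, with a crude inclusion--exclusion for distinct coordinates and for avoiding $a_1,\dots,a_r$, gives both the lower bound on $|S_2(\mathbf{y},h)|$ and surjectivity of the coordinate projection, all deterministically.
\end{enumerate}
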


This theorem fully derandomizes the main result of \cite{BP23}. In the special case $\gamma=1$, it settles van Emde Boas's conjecture (Conjecture \ref{Con1.0}). A naturally arising open problem is to find an elementary proof of this result that bypasses heavy algebraic geometry. Another challenge is to extend this result to larger constant approximation factors $\gamma \geq 1$, potentially via tensor product techniques to amplify the hardness factor. For the $p > 2$ regime, this is already achieved in \cite{HS25*} for sub-polynomial factors via PCPs. It remains a fascinating question whether the PCP techniques of \cite{HS25*, HS25} or the subset-sum approach of \cite{Hit25} can be adapted to the small $p$ regime (particularly $p=2$) to prove the exact {\bf NP}-hardness, perhaps in tandem with Deligne's theorem. 

\begin{Rmk} 
As shown in \cite[Lemma 5.1]{BP23}, any explicit bad center of a locally dense Reed-Solomon lattice yields an explicit list-decoding configuration for Reed-Solomon codes where the agreement-to-dimension ratio exceeds the state-of-the-art bound of Guruswami and Rudra \cite[Corollary 2]{GR05} (which requires a ratio of $2 - \Omega(1)$ to produce super-polynomial list sizes). As an application of our main construction, this agreement-to-dimension ratio can be made arbitrarily large (see Corollary \ref{ratio}). 
\end{Rmk}

The remainder of this paper is organized as follows. In Section 2, we review the formal definition of locally dense lattices and their connection to the derandomization of SVP, following \cite{Mic12, BP23}. In Section 3, we present the definition of Reed-Solomon lattices and their utility in constructing locally dense gadgets, following \cite{BP23}. This reduces the geometric task of constructing locally dense lattices to the study of $\FF_q$-rational points on certain algebraic varieties over a large prime field $\FF_q$. In Section 4, we analyze the geometric properties of these varieties, showing that they are complete intersections with mild singularities. In Section 5, we apply Deligne's theorem (the Riemann Hypothesis for varieties over finite fields) and the total Betti number bounds of \cite{WZ26} to obtain sharp estimates for the number of their $\FF_q$-rational points. In Section 6, we apply the inclusion-exclusion sieve to establish that the Reed-Solomon lattice is locally dense in a weaker sense (without coordinate projection). Finally, in Section 7, we extend this sieving framework to prove local density in the strong sense (with projection), thereby completing the derandomization proof.

\subsection*{Acknowledgements} 
I would like to thank Qi Cheng and Hendrik Lenstra for helpful discussions on these and related topics over the years. My interest in lattices was initiated during a visit to the Institute for Advanced Study at Tsinghua University many years ago; I am grateful to Xiaoyun Wang for her warm hospitality and for hosting my visit to her research group.

\section{Locally dense lattices}

Locally dense lattices, as formulated by Micciancio, are fundamental in proving complexity results 
and in de-randomization. They are also of independent interest as basic objects of study in mathematics and 
theoretical computer science. In this section, we review the notion of locally dense lattices, 
beginning with a description of a simplified version. 

Roughly speaking, a lattice $\mathcal{L}$ in $\mathbb{R}^n$ is called locally dense if for some $\alpha \in (0, 1)$, 
there is $x \in \mathbb{Z}^n$ such that the shift $x + \mathcal{L}$ contains at least $2^{n^{\epsilon}}$ 
elements of $\ell_p$-norm at most $\alpha \lambda^{(p)}(\mathcal{L})$ for some $\epsilon > 0$. That is, the ball 
$B(0, \alpha \lambda^{(p)}(\mathcal{L}))$ centered at $0$ with $\ell_p$-radius $\alpha \lambda^{(p)}(\mathcal{L}) < \lambda^{(p)}(\mathcal{L})$ 
contains subexponentially many vectors in $x+\mathcal{L}$. Namely, 
$$|B(0, \alpha \lambda^{(p)}(\mathcal{L})) \cap (x + \mathcal{L})| \geq 2^{n^{\epsilon}}.$$
Clearly, we must have $\alpha \geq 1/2$ for the ball to contain more than one 
vector in $x+\mathcal{L}$. Furthermore, we require $\alpha > 2^{-1/p}$ for the ball to contain more than a polynomial number of vectors in $x+\mathcal{L}$, at least in the $\ell_2$-norm case; 
see Micciancio and Goldwasser~\cite[Chapter 5]{MG02}. Thus, we can and will assume that $\alpha \in (2^{-1/p}, 1)$. The center $x$ is called a \emph{bad center} of the lattice $\mathcal{L}$. 

Locally dense lattices are not efficiently list-decodable, even combinatorially, to within the 
distance $\alpha \lambda^{(p)}(\mathcal{L})$ around the bad center $x$. 
A locally dense lattice usually has many bad centers, which can often be proved by an averaging argument. 
A suitable sampling would then find a bad center with non-trivial probability, leading to 
hardness results under random polynomial-time reductions. To obtain a deterministic reduction, one needs to deterministically construct a candidate center and mathematically prove that this suspected center is indeed a bad center. This is the key to de-randomization. 

We now recall the formal definition of locally dense lattices following~\cite{Mic12} and~\cite{BP23}. 
The formal definition is slightly more involved than the intuitive description above.

\begin{Def}[Locally dense lattice]\label{def:locally-dense} 
For $1 \leq p < \infty$, a real number $\alpha \in (2^{-1/p}, 1)$, and positive integers 
$r < n$, a $(p, \alpha, r, n)$-locally dense lattice consists of an integer lattice $\mathcal{L}$ in $\mathbb{R}^n$, 
a positive integer $\ell$, a shift $x \in \mathbb{Z}^n$, and an $r \times n$ matrix $A \in \mathbb{Z}^{r \times n}$, 
satisfying the following two properties:
\begin{enumerate}
    \item[(1)] $\lambda^{(p)}(\mathcal{L}) \geq \ell^{1/p}$, and
    \item[(2)] Let 
    $$V := (x + \mathcal{L}) \cap B(0, \alpha \ell^{1/p})$$
    denote the set of all vectors in the coset $x + \mathcal{L}$ with $\ell_p$-norm at most $\alpha \ell^{1/p}$. 
    Let $A(V)$ denote the image of $V$ under the $\mathbb{Z}$-linear map $A: \mathbb{Z}^n \rightarrow \mathbb{Z}^r$. Then, 
    $$\{0, 1\}^r \subseteq A(V) := \{ A(v) : v \in V \}.$$
\end{enumerate}
\end{Def}

In applications, we typically choose $n$ to be polynomial in $r$ (more precisely, $r \sim n^{\epsilon}$ for some constant $\epsilon > 0$). 
Property (1) guarantees that the $\ell_p$-minimum distance of the lattice is sufficiently large. 
Property (2) ensures that not only $V$, but also its projection $A(V)$ under the linear map $A$, contains at least $2^r \sim 2^{n^{\epsilon}}$ elements. In particular, the cardinality of both sets is at least subexponential in $n$. 

In our de-randomization, we will choose $A$ to be the projection map of $V$ onto the first $r$ coordinates, 
that is, $A(z_1, \dots, z_n)^T = (z_1, \dots, z_r)^T$. Mathematically, this corresponds to the matrix block representation:
$$\mathbf{A} = [I_r \mid 0_{r \times (n-r)}]$$ 
where $I_r$ is the $r \times r$ identity matrix and $0_{r \times (n-r)}$ denotes the $r \times (n-r)$ zero matrix. 

If we only require Property (1) and the weaker condition that the set $V$ itself has subexponentially many elements (omitting the projection constraint on $A(V)$), then $\mathcal{L}$ is called a \emph{locally dense lattice in the weaker sense}. If we require both Property (1) and Property (2) as defined above, then $\mathcal{L}$ is a \emph{locally dense lattice in the strong sense}. 
Generally, a locally dense lattice in the weaker sense can be shown to be locally dense in the strong sense with some additional work. Thus, constructing a locally dense lattice in the weaker sense is the most critical initial milestone.

The following reduction is established in ~\cite[Theorem 5.1]{Mic12}, see also ~\cite[Corollary 2.11]{BP23}. 
 
\begin{Thm}\label{thm:reduction} 
Let $1\leq p < \infty$, let $r$ be a positive integer, and let $\alpha \in (2^{-1/p}, 1)$. Suppose that 
there exists an algorithm computing a $(p, \alpha, r, \mathrm{poly}(r))$-locally dense lattice in deterministic $\mathrm{poly}(r)$ time. 
Then, $\gamma$-$\mathrm{GapSVP}_p$ is $\mathbf{NP}$-hard for all $1 \leq \gamma < 1/\alpha$. 
\end{Thm}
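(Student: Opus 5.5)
The plan is to reduce deterministically from $\gamma'$-$\mathrm{GapCVP}'_p$, the variant of the Closest Vector Problem whose ``yes'' instances admit a $\{0,1\}$-combination of the basis vectors $B\in\mathbb{Z}^{m\times k}$ within distance $t^{1/p}$ of the target $y$. This variant is $\mathbf{NP}$-hard for every constant $\gamma'\ge 1$ by the classical deterministic reductions from exact set cover / 3SAT (Arora et al.), so no randomness is needed there. Given such an instance, run the assumed algorithm with $r=k$ to obtain, in deterministic $\mathrm{poly}(k)$ time, a $(p,\alpha,k,n)$-locally dense lattice $(\mathcal{L}(L),\ell,x,A)$ with $n=\mathrm{poly}(k)$. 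Then build the embedding lattice generated by the columns of
\[
\begin{pmatrix} \beta B A & \beta y \\ L & x \\ 0 & \mu \end{pmatrix} ,
\]
with scaling parameters $\beta,\mu>0$ chosen below (rescaled to be integral), and output the threshold $d=(\alpha^p\ell+\beta^p t+\mu^p)^{1/p}$. The reduction is polynomial because $k$, $n$ and all entries are polynomially bounded.

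\emph{Completeness.} Suppose $Bz$ is within $t^{1/p}$ of $y$ for some $z\in\{0,1\}^k$. Property (2) gives $v\in(x+\mathcal{L})\cap B(0,\alpha\ell^{1/p})$ with $Av=z$; write $v=Lw+x$. The combination with coefficients $(w,-1)$ then yields the vector $(\beta(BAv - y),\,v,\,-\mu)$, which up to sign equals $(\beta(Bz-y),\,v,\,-\mu)$. Its $p$-th power norm is at most $\beta^p t+\alpha^p\ell+\mu^p=d^p$.

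\emph{Soundness.} Take any nonzero lattice vector, with last coefficient $c\in\mathbb{Z}$. If $c=0$, the middle block is a nonzero vector of $\mathcal{L}$ (or the vector is otherwise nonzero in a way we will also need to handle), so its norm is at least $\ell^{1/p}$ by Property (1). If $|c|\ge 2$, the last coordinate alone contributes $2^p\mu^p$. If $c=\pm1$, the top block is $\beta$ times the distance from $y$ to a point of $\mathcal{L}(B)$, so by the ``no'' promise it is at least $\beta\gamma' t^{1/p}$. Choose $\mu$ and $\beta$ so that $\mu^p\approx\alpha^p\ell$ and $\beta^p t$ is a small constant multiple of $\ell$, and take $\gamma'$ large. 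Then:
\begin{itemize}
\item in case $c=0$ the norm is at least $\ell^{1/p}$;
\item in the other cases the norm is at least a large multiple of $\ell^{1/p}$;
\item meanwhile $d^p\approx(2\alpha^p+\epsilon)\ell$.
\end{itemize}
The ratio of the minimum to $d$ must exceed $\gamma$. The balancing is delicate because $\alpha>2^{-1/p}$ makes $2\alpha^p>1$. Following Micciancio, I would therefore drop the $\mu$ row (taking $c=\pm1$ implicitly via a homogenization argument) and instead handle $|c|\ge2$ through the $\gamma'$ gap, using that $\mathcal{L}(B)$ scaled by $c$ is still far from $cy$. This gives yes-threshold $\approx(\alpha^p\ell+\beta^p t)^{1/p}\to\alpha\ell^{1/p}$ against a no-minimum of $\ell^{1/p}$, hence any $\gamma<1/\alpha$.

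\textbf{Main obstacle.} The step I expect to be hardest is this soundness case analysis for $|c|\ge2$ and for $c=0$ with $w$ such that $Lw=0$ but $BAw\neq0$ (handled by requiring $L$ to have full column rank). For $|c|\ge 2$ the fix is to reduce instead from the non-homogeneous-robust version of $\mathrm{GapCVP}'$ in which $c\,y$ is far from $\mathcal{L}(B)$ for every nonzero integer $c$; this version is known to be deterministically $\mathbf{NP}$-hard (Micciancio's $\mathrm{GapCVP}'$). Since this is exactly \cite[Theorem 5.1]{Mic12}, I would follow that proof, checking that its only randomized ingredient is the locally dense gadget, which is now supplied deterministically.
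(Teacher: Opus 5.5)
Your final construction (drop the $\mu$ row and reduce from Micciancio's $\mathrm{GapCVP}'$, whose no-instances satisfy $\|Bz-cy\|_p>\gamma' t^{1/p}$ for every $z\in\mathbb{Z}^k$ and every nonzero $c\in\mathbb{Z}$) is correct, and it is essentially the reduction of \cite[Theorem 5.1]{Mic12} (cf.\ \cite[Corollary 2.11]{BP23}) that the paper cites without reproving. When you write it up, use the generating matrix $\begin{pmatrix}\beta BAL & \beta(BAx-y)\\ L & x\end{pmatrix}$, since your block $\beta BA$ over $L$ does not produce the vector you want; with this matrix, coefficients $(w,c)$ give $\bigl(\beta(BAv-cy),\,v\bigr)$ with $v=Lw+cx$, the $c=0$ case is automatic (the vector is nonzero only if $Lw\neq 0$), and taking $\beta^p t=\eta\ell$ with $\gamma'^p\eta\ge 1$ and $\gamma(\alpha^p+\eta)^{1/p}<1$ yields every $\gamma<1/\alpha$.
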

 
 In this theorem, we can remove a finite number of small integers $r$ and assume that $r$ is sufficiently large. Proving the deterministic $\mathbf{NP}$-hardness of $\gamma$-$\mathrm{GapSVP}_p$ for $1 \leq \gamma < 1/\alpha$ 
 is thus reduced to a deterministic polynomial time algorithm 
 which computes a locally dense lattice as above for all sufficiently large $r$. The existence of such a deterministic polynomial time algorithm is conjectured 
 by Micciancio \cite[Conjecture 1]{Mic14}. It is the last part of his derandomization program.

In the existing literature, several techniques have been developed to construct locally dense lattices, but none are deterministic. 
The central paradigm in~\cite{BP23} is to construct the desired locally dense lattices via the lifting of Reed-Solomon codes. 
This is the framework upon which we build, and we recall its explicit construction in the next section.

\section{Reed-Solomon lattices}

Let $q$ be a prime, and $\mathbb{F}_q = \mathbb{Z}/q\mathbb{Z}$ be the prime finite field of $q$ elements. Write 
$\mathbb{F}_q = \{a_1, \dots, a_q\}$. This ordering of the elements in $\mathbb{F}_q$ will be fixed. 
For a positive integer $1 < k < q$, let 
$$H_q(k) := 
\begin{pmatrix}
1 & 1 & \dots & 1 \\
 a_1 & a_2 & \dots & a_q \\
 \vdots & \vdots & \ddots & \vdots \\
a_1^{k-1} & a_2^{k-1} & \dots & a_q^{k-1} 
\end{pmatrix}.$$
The transposes of the $k$ row vectors of $H_q(k)$ generate the $k$-dimensional Reed-Solomon code $\mathrm{RS}_q(k)$ in $\mathbb{F}_q^q$. We shall work with the dual code of $\mathrm{RS}_q(k)$, which is the $(q-k)$-dimensional Reed-Solomon code $\mathrm{RS}_q(q-k)$ in $\mathbb{F}_q^q$. Thus, the $k \times q$ matrix $H_q(k)$ is the parity-check matrix of $\mathrm{RS}_q(q-k)$ in $\mathbb{F}_q^q$. That is, 
$$\mathrm{RS}_q(q-k) = \{ \mathbf{x} \in \mathbb{F}_q^q : H_q(k)\mathbf{x} = \mathbf{0} \}.$$
It is a $(q-k)$-dimensional MDS code in $\mathbb{F}_q^q$, and hence has minimum distance $q - (q-k) + 1 = k+1$. 

\begin{Def} 
Given a prime $q$ and an integer $1 < k < q$, the Reed-Solomon lattice $\mathcal{L}_{q,k}$ in $\mathbb{Z}^q$ is the lifting to $\mathbb{Z}^q$ of the Reed-Solomon code $\mathrm{RS}_q(q-k)$ in $\mathbb{F}_q^q$: 
$$\mathcal{L}_{q,k} := \mathrm{RS}_q(q-k) + q\mathbb{Z}^q = \{ \mathbf{v} \in \mathbb{Z}^q : H_q(k)\mathbf{v} = \mathbf{0}\} \supseteq q\mathbb{Z}^q.$$
The dimension of the lattice $\mathcal{L}_{q,k}$ in $\mathbb{Z}^q$ is $n=q$. 
\end{Def}

A generator matrix for $\mathcal{L}_{q,k}$ can be found in deterministic poly($q$) time as follows.  
Partition the parity-check matrix as
\[
H_q(k)=\begin{pmatrix}V&W\end{pmatrix},
\]
where
\[
V=
\begin{pmatrix}
1&1&\cdots&1\\
a_1&a_2&\cdots&a_k\\
\vdots&\vdots&&\vdots\\
a_1^{k-1}&a_2^{k-1}&\cdots&a_k^{k-1}
\end{pmatrix}
\in \mathbb{F}_q^{k\times k}
\]
and
\[
W=
\begin{pmatrix}
1&\cdots&1\\
a_{k+1}&\cdots&a_q\\
\vdots&&\vdots\\
a_{k+1}^{k-1}&\cdots&a_q^{k-1}
\end{pmatrix}
\in \mathbb{F}_q^{k\times(q-k)}.
\]
Since $a_1,\dots,a_k$ are distinct,
\[
\det(V)=\prod_{1\le i<j\le k}(a_j-a_i)\ne 0
\quad\text{in }\mathbb{F}_q,
\]
so $V$ is invertible.  Set
\[
\Lambda:=V^{-1}W\in\mathbb{F}_q^{k\times(q-k)},
\]
and choose an integer lift
\[
\widehat{\Lambda}\in\mathbb{Z}^{k\times(q-k)}
\quad\text{with}\quad
\widehat{\Lambda}\bmod q=\Lambda.
\]
For example, every entry of $\widehat{\Lambda}$ may be chosen in
$\{0,1,\dots,q-1\}$, or as a centered residue representative.

\begin{Prop}
With basis vectors placed in the columns, the matrix
\[
\boxed{
B_{q,k}
=
\begin{pmatrix}
qI_k&-\widehat{\Lambda}\\[2mm]
0_{(q-k)\times k}&I_{q-k}
\end{pmatrix}
}
\]
is an integer basis matrix for $L_{q,k}$.  In particular,
\[
L_{q,k}=B_{q,k}\mathbb{Z}^q
\qquad\text{and}\qquad
\det(L_{q,k})=|\det(B_{q,k})|=q^k.
\]
\end{Prop}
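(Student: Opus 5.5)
To prove the proposition, I will show two inclusions, $B_{q,k}\mathbb{Z}^q\subseteq L_{q,k}$ and $L_{q,k}\subseteq B_{q,k}\mathbb{Z}^q$. Throughout, I use the description $L_{q,k}=\{\mathbf{v}\in\mathbb{Z}^q : H_q(k)\mathbf{v}\equiv \mathbf{0} \pmod q\}$. The key observation is that multiplying by the invertible matrix $V^{-1}$ does not change the kernel modulo $q$. Hence
\[
\mathbf{v}\in L_{q,k}\iff V^{-1}H_q(k)\mathbf{v}\equiv \mathbf{0}\pmod q \iff \mathbf{v}_1+\Lambda\mathbf{v}_2\equiv \mathbf{0}\pmod q,
\]
where I write $\mathbf{v}=(\mathbf{v}_1,\mathbf{v}_2)$ with $\mathbf{v}_1\in\mathbb{Z}^k$ and $\mathbf{v}_2\in\mathbb{Z}^{q-k}$. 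Here I use $V^{-1}H_q(k)=(I_k\mid \Lambda)$ over $\mathbb{F}_q$. Since $\widehat\Lambda\equiv\Lambda$, the condition becomes $\mathbf{v}_1\equiv -\widehat{\Lambda}\mathbf{v}_2 \pmod q$.

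For the inclusion $B_{q,k}\mathbb{Z}^q\subseteq L_{q,k}$, I check each column. The first $k$ columns are $q\mathbf{e}_i$, which satisfy the condition trivially. The column indexed by $k+j$ is $(-\widehat\Lambda\mathbf{e}_j,\mathbf{e}_j)$, and it satisfies $\mathbf{v}_1=-\widehat\Lambda\mathbf{v}_2$ exactly. Since $L_{q,k}$ is a group, every integer combination of the columns also lies in $L_{q,k}$.

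For the reverse inclusion, take $\mathbf{v}\in L_{q,k}$ and set $\mathbf{b}=\mathbf{v}_2$. By the criterion above, $\mathbf{v}_1+\widehat\Lambda\mathbf{v}_2=q\mathbf{a}$ for some $\mathbf{a}\in\mathbb{Z}^k$. Then
\[
B_{q,k}(\mathbf{a},\mathbf{b})=(q\mathbf{a}-\widehat\Lambda\mathbf{v}_2,\ \mathbf{v}_2)=\mathbf{v}.
\]

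It remains to confirm that $B_{q,k}$ is a basis and to compute the determinant. The matrix is block upper triangular with diagonal blocks $qI_k$ and $I_{q-k}$. Its determinant is therefore $q^k\neq 0$, so the columns are linearly independent and form a basis. It follows that $\det(L_{q,k})=q^k$. As a sanity check, this agrees with the index $[\mathbb{Z}^q:L_{q,k}]=q^k$, because $L_{q,k}$ is the kernel of the map $\mathbb{Z}^q\to\mathbb{F}_q^k$ given by $H_q(k)$, which is surjective since $H_q(k)$ has full rank $k$.

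There is no real obstacle here. The only point needing care is the reduction from $H_q(k)$ to its systematic form $(I_k\mid\Lambda)$, which uses the invertibility of the Vandermonde block $V$ over $\mathbb{F}_q$. I will also note that the construction runs in deterministic $\mathrm{poly}(q)$ time, since inverting $V$ over $\mathbb{F}_q$ and lifting its entries are routine computations.
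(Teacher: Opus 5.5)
Your proposal is correct and follows essentially the same route as the paper: you use $V^{-1}$ to reduce the syndrome condition to $\mathbf{v}_1+\Lambda\mathbf{v}_2\equiv\mathbf{0}\pmod q$, check that the columns of $B_{q,k}$ lie in $L_{q,k}$, write an arbitrary lattice vector as $B_{q,k}(\mathbf{a},\mathbf{v}_2)$, and read off $\det(B_{q,k})=q^k$ from the block upper triangular form. The extra index check $[\mathbb{Z}^q:L_{q,k}]=q^k$, via surjectivity of $H_q(k)$ modulo $q$, is a consistency check that the paper does not include.
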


\begin{proof}
Modulo $q$, one has
\begin{align*}
H_q(k)B_{q,k}
&=
\begin{pmatrix}V&W\end{pmatrix}
\begin{pmatrix}
qI_k&-\widehat{\Lambda}\\
0&I_{q-k}
\end{pmatrix}\\
&=
\begin{pmatrix}
qV&W-V\widehat{\Lambda}
\end{pmatrix}
\equiv
\begin{pmatrix}
0&W-V(V^{-1}W)
\end{pmatrix}
=0.
\end{align*}
Thus every column of $B_{q,k}$ belongs to $L_{q,k}$, and therefore
\[
B_{q,k}\mathbb{Z}^q\subseteq L_{q,k}.
\]

Conversely, let
\[
v=
\begin{pmatrix}u\\w\end{pmatrix}
\in L_{q,k},
\qquad
u\in\mathbb{Z}^k,
\quad
w\in\mathbb{Z}^{q-k}.
\]
The syndrome condition gives
\[
Vu+Ww\equiv 0\pmod q.
\]
Multiplying by $V^{-1}$ over $\mathbb{F}_q$ yields
\[
u+\Lambda w\equiv 0\pmod q.
\]
Since $\widehat{\Lambda}$ is an integer lift of $\Lambda$, there is a
vector $z\in\mathbb{Z}^k$ such that
\[
u=qz-\widehat{\Lambda}w.
\]
Hence
\[
v
=
\begin{pmatrix}
qz-\widehat{\Lambda}w\\
w
\end{pmatrix}
=
B_{q,k}
\begin{pmatrix}z\\w\end{pmatrix},
\]
which proves $L_{q,k}\subseteq B_{q,k}\mathbb{Z}^q$.

Finally, $B_{q,k}$ is block upper triangular, so
\[
\det(B_{q,k})
=
\det(qI_k)\det(I_{q-k})
=q^k.
\]
\end{proof}

By definition, a column vector $\mathbf{v} \in \mathbb{Z}^q$ is in $\mathcal{L}_{q,k}$ if and only if its reduction $\mathbf{v} \pmod{q}$ is in $\mathrm{RS}_q(q-k)$.
Since the minimum Hamming distance of $\mathrm{RS}_q(q-k)$ is $k+1$, it follows that the $\ell_p$-minimum distance of $\mathcal{L}_{q,k}$ satisfies 
$$\lambda^{(p)}(\mathcal{L}_{q,k}) \geq (k+1)^{1/p}.$$
This is true for ${\bf v} \in \mathcal{L}_{q,k}-\{ 0\}$ if ${\bf v}$ is not divisible by $q$. If ${\bf v}\not=0$ is divisible by $q$, we also have 
\[ |{\bf v}|_p \geq q^{1/p}  \geq (k+1)^{1/p}.\]
The exact minimum distance $\lambda^{(p)}(\mathcal{L}_{q,k})$ of the lattice $\mathcal{L}_{q,k}$ is unknown. This leads to the following natural question. 

\begin{Question} What is the minimum distance $\lambda^{(p)}(\mathcal{L}_{q,k})$?  
\end{Question}

In constructing locally dense lattices, 
we would like to have the minimum distance 
to be large. In this direction, 
an important observation by Bennett and Peikert~\cite{BP23} is the following improvement. 

\begin{Lem}\label{lem:BP-min-dist} 
Let $1 \leq k \leq q/2$. Then, for all $1 \leq p < \infty$, we have 
$$\lambda^{(p)}(\mathcal{L}_{q,k}) \geq (2k)^{1/p}.$$
\end{Lem}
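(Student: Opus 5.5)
The plan is to reduce the $\ell_p$ bound to an $\ell_1$ bound, and then prove the $\ell_1$ bound by a Newton-identity argument on the multisets of evaluation points encoded by the positive and negative parts of a lattice vector.

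\emph{Reduction to $p=1$.} For an integer $t$ and $p \geq 1$ we have $|t|^p \geq |t|$. Hence $\|\mathbf{v}\|_p^p \geq \|\mathbf{v}\|_1$ for every $\mathbf{v} \in \mathbb{Z}^q$. So it suffices to show that every nonzero $\mathbf{v} \in \mathcal{L}_{q,k}$ satisfies $\|\mathbf{v}\|_1 \geq 2k$. Suppose instead that $0 < \|\mathbf{v}\|_1 < 2k \leq q$. Then every entry satisfies $|v_i| < q$, so $\mathbf{v} \not\equiv 0 \pmod q$; in particular the case $\mathbf{v} \in q\mathbb{Z}^q \setminus \{0\}$ is handled separately by $\|\mathbf{v}\|_1 \geq q \geq 2k$.

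\emph{Encoding by polynomials.} Write $v_i = v_i^+ - v_i^-$ with $v_i^\pm \geq 0$ and $\min(v_i^+, v_i^-) = 0$. Define
\[
f(X) = \prod_i (X - a_i)^{v_i^+}, \qquad g(X) = \prod_i (X - a_i)^{v_i^-} \in \mathbb{F}_q[X].
\]
These have no common root. The parity-check condition $H_q(k)\mathbf{v} \equiv 0$ says exactly that the power sums satisfy $p_j(\text{roots of } f) = p_j(\text{roots of } g)$ in $\mathbb{F}_q$ for $0 \leq j \leq k-1$, with roots counted with multiplicity. The case $j=0$ gives $\sum_i v_i \equiv 0 \pmod q$. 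Since $|\sum_i v_i| \leq \|\mathbf{v}\|_1 < q$, this forces $\sum_i v_i = 0$ over $\mathbb{Z}$, so $\deg f = \deg g = m$ with $2m = \|\mathbf{v}\|_1$. Thus $1 \leq m < k$, where $m \geq 1$ because $\mathbf{v} \neq 0$.

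\emph{Newton's identities.} The identities $j e_j = \sum_{i=1}^{j} (-1)^{i-1} e_{j-i} p_i$ determine $e_1, \dots, e_m$ recursively from $p_1, \dots, p_m$. This recursion requires dividing by $j \leq m < k \leq q/2 < q$, which is invertible in $\mathbb{F}_q$. The needed power sums $p_1, \dots, p_m$ all have index at most $k-1$, so they coincide for $f$ and $g$. Hence all elementary symmetric functions of the roots agree, i.e.\ $f = g$ as monic polynomials of degree $m \geq 1$. This contradicts the fact that $f$ and $g$ have disjoint roots.

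The only delicate points are the two uses of $2k \leq q$: first, to lift the degree equality from $\mathbb{F}_q$ to $\mathbb{Z}$, and second, to invert $j$ in Newton's identities. I expect the main care to be needed in verifying that $m \leq k-1$, so that only the available power sums are used; this is exactly where the strict inequality $\|\mathbf{v}\|_1 < 2k$ enters.
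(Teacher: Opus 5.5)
Your proof is correct and follows essentially the same route as the paper. You reduce to $\ell_1$ via $|t|^p \ge |t|$, use the $j=0$ syndrome equation with $\|\mathbf{v}\|_1 < 2k \le q$ to force the positive and negative parts to have equal size $m<k$, and then apply Newton's identities to show the two disjoint multisets coincide and so are empty; your polynomials $f,g$ are just a repackaging of the paper's multisets $T^{\pm}$, and your aside about $q\mathbb{Z}^q$ is harmless but unnecessary.
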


\begin{proof} 
For the reader's convenience, we recall the elegant proof from~\cite{BP23}. Since $\|\mathbf{x}\|_p \geq \|\mathbf{x}\|_1^{1/p}$ for all $\mathbf{x} \in \mathbb{Z}^q$, the lemma is reduced to the case $p=1$. 
Suppose now that $\mathbf{x} = (x_1, \dots, x_q)^T \in \mathcal{L}_{q,k}$ is a lattice vector with 
$$m := \|\mathbf{x}\|_1 = \sum_{i=1}^q |x_i| < 2k.$$
To prove the lemma, we need to show that $\mathbf{x} = \mathbf{0}$. Consider the syndrome equation $H_q(k)\mathbf{x} = \mathbf{0}$, which is the following system in $\mathbb{F}_q$:
$$\sum_{i=1}^q x_i a_i^j = 0, \quad 0 \leq j \leq k-1,$$
where $0^0 = 1$ by convention. 
Let $T^+$ be the multiset consisting of all $a_i$ with multiplicity $x_i$ when $x_i > 0$. Let $T^-$ be the multiset consisting of all $a_i$ with multiplicity $-x_i$ when $x_i < 0$. Thus, $|T^+| + |T^-| = m < 2k \leq q$. 
The above system with $j=0$ says that $|T^+| - |T^-| \equiv 0 \pmod{q}$. This implies that $|T^+| = |T^-| = m/2 < k$, since both $|T^+|$ and $|T^-|$ are non-negative integers bounded by $m < 2k \leq q$. 
The system now implies that the $j$-th power symmetric functions of the two multisets $T^+$ and $T^-$ are identical for all $1 \leq j \leq k-1$. By Newton's identities, the $j$-th elementary symmetric functions of the two multisets $T^+$ and $T^-$ (of the same cardinality $m/2 \leq k-1$) are identical for all $1 \leq j \leq k-1$. It follows that the two multisets $T^+$ and $T^-$ must be identical, which forces them to be empty since they are disjoint. This means that $\mathbf{x} = \mathbf{0}$, completing the proof. 
\end{proof}

We would like to show that $\mathcal{L}_{q,k}$ is a $(p, \alpha, \lfloor q^{\epsilon}\rfloor, q)$-locally dense lattice in $\mathbb{Z}^q$ for each constant $\alpha \in (2^{-1/p}, 1)$ and for suitable choices of $1 \leq k < q/2$ and $0 < \epsilon < 1$. Most importantly, we need to deterministically construct and prove an explicit bad center.

\begin{Def}
Let $1 \leq h \leq q/2$ be an integer. Take a binary vector  $\mathbf{y}  \in \mathbb{Z}^q$ with Hamming weight $h$. For simplicity, we choose and fix 
 $$\mathbf{y} = (1, \dots, 1, 0, \dots, 0)^T \in \mathbb{Z}^q$$ 
 with the first $h$ coordinates being $1$ and the last $q-h$ coordinates being $0$. Write 
$$\mathbf{u} := H_q(k)\mathbf{y} = (h, h_1, \dots, h_{k-1})^T \in \mathbb{F}_q^k.$$ 
\end{Def}
Clearly, the first coordinate of $\mathbf{u}$ is $h$, as the first row of $H_q(k)$ is $(1, 1, \dots, 1)$. By definition, we obtain the following property. 

\begin{Prop} 
The coset of the binary vector $\mathbf{y}$ for the lattice $\mathcal{L}_{q,k}$ is given by 
$$\mathbf{y} + \mathcal{L}_{q,k} = \{ \mathbf{x} = (x_1, \dots, x_q)^T \in \mathbb{Z}^q : H_q(k)\mathbf{x} = \mathbf{u} = H_q(k)\mathbf{y}\}.$$
\end{Prop}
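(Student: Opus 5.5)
The statement is a direct unwinding of definitions, so I will prove the two inclusions using only the description of $\mathcal{L}_{q,k}$ as the kernel of the syndrome map. Consider the $\ZZ$-linear map
\[
\sigma:\ZZ^q\to\FF_q^k,\qquad \sigma(\mathbf{x})=H_q(k)\mathbf{x}\bmod q .
\]
Entries of $\mathbf{x}$ are reduced modulo $q$, and $H_q(k)$ is viewed over $\FF_q=\ZZ/q\ZZ$. By the definition of the Reed-Solomon lattice, $\mathcal{L}_{q,k}=\ker\sigma$. The identity to prove then says that the coset $\mathbf{y}+\ker\sigma$ is the fibre $\sigma^{-1}(\sigma(\mathbf{y}))=\sigma^{-1}(\mathbf{u})$. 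This is the standard fact that cosets of a kernel are exactly the fibres of the homomorphism.

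For the inclusion $\subseteq$, take $\mathbf{x}=\mathbf{y}+\mathbf{v}$ with $\mathbf{v}\in\mathcal{L}_{q,k}$. Linearity gives
\[
H_q(k)\mathbf{x}=H_q(k)\mathbf{y}+H_q(k)\mathbf{v}=\mathbf{u}+\mathbf{0}=\mathbf{u}
\]
in $\FF_q^k$.

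For the inclusion $\supseteq$, take $\mathbf{x}\in\ZZ^q$ with $H_q(k)\mathbf{x}=\mathbf{u}$ and set $\mathbf{v}:=\mathbf{x}-\mathbf{y}\in\ZZ^q$. Then $H_q(k)\mathbf{v}=\mathbf{u}-\mathbf{u}=\mathbf{0}$ in $\FF_q^k$, so $\mathbf{v}\in\mathcal{L}_{q,k}$. Hence $\mathbf{x}=\mathbf{y}+\mathbf{v}\in\mathbf{y}+\mathcal{L}_{q,k}$.

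There is no real obstacle here. The only point needing care is consistency of the setting: the equation $H_q(k)\mathbf{x}=\mathbf{u}$ must be read modulo $q$, with $\mathbf{u}$ the reduction of the integer vector $H_q(k)\mathbf{y}$, exactly as $H_q(k)\mathbf{v}=\mathbf{0}$ is read in the definition of $\mathcal{L}_{q,k}$. This reading also confirms the stated first coordinate: the first row of $H_q(k)$ is all ones, so the first entry of $\mathbf{u}$ is the coordinate sum of $\mathbf{y}$, which is $h$.
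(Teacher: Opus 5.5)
Your proof is correct and takes the same approach as the paper. The paper simply asserts the proposition ``by definition,'' and your two-inclusion argument, identifying the cosets of the kernel of the syndrome map $\mathbf{x}\mapsto H_q(k)\mathbf{x} \bmod q$ with its fibres, is exactly the unwinding the paper leaves implicit.
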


We shall prove that this explicit vector $\mathbf{y}$ is a bad center of the locally dense Reed-Solomon lattice $\mathcal{L}_{q,k}$ for suitable choices of $k$ and $h = \lfloor (1+\epsilon)k \rfloor \leq q/2$. To show that the coset $\mathbf{y} + \mathcal{L}_{q,k}$ contains many short vectors, it is sufficient to show that it contains many short binary vectors. For this purpose, we introduce the following subset $S_2(\mathbf{y}, h)$, where the subscript $2$ indicates that we restrict our attention to binary vectors. 

\begin{Def} 
Let $S_2(\mathbf{y}, h)$ denote the set of binary vectors in the coset $\mathbf{y} + \mathcal{L}_{q,k}$ with Hamming weight $h$: 
$$S_2(\mathbf{y}, h) := \{ \mathbf{x} = (x_1, \dots, x_q)^T \in \{0, 1\}^q : H_q(k)\mathbf{x} = \mathbf{u}, \ \|\mathbf{x}\|_1 = h\}.$$ 
Clearly, $\mathbf{y} \in S_2(\mathbf{y}, h)$. Fix a constant $\alpha \in (2^{-1/p}, 1)$, so that $1 < 2\alpha^p < 2$. Define 
$$h := \lfloor \alpha^p (2k) \rfloor = \lfloor (2\alpha^p) k \rfloor = \lfloor (1+\epsilon)k \rfloor, \quad 0 < \epsilon := 2\alpha^p - 1 < 1.$$ 
In particular, $\alpha = \left(\frac{1+\epsilon}{2}\right)^{1/p}$. 
\end{Def}

Every vector $\mathbf{x} \in S_2(\mathbf{y}, h)$ is a binary vector of Hamming weight $h$, and thus its $\ell_p$-norm is bounded by 
$$\|\mathbf{x}\|_p = h^{1/p} \leq \alpha (2k)^{1/p} \leq \alpha \lambda^{(p)}(\mathcal{L}_{q,k}),$$
where the last inequality follows from Lemma \ref {lem:BP-min-dist}. This yields the following proposition. 

\begin{Prop}\label{prop:V-contains-S2} 
In the notation of Definition~\ref{def:locally-dense} on locally dense lattices, we take $\mathcal{L} = \mathcal{L}_{q,k}$. Then $\ell = 2k$ satisfies Condition (1) of Definition~\ref{def:locally-dense}, and 
$$V := (\mathbf{y} + \mathcal{L}_{q,k}) \cap B(0, \alpha (2k)^{1/p}) \supseteq S_2(\mathbf{y}, h).$$
\end{Prop}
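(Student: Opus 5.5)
This proposition is a direct consequence of Lemma~\ref{lem:BP-min-dist} together with the definitions of $h$ and $S_2(\mathbf{y},h)$. No counting is needed. I will verify the two claims separately.

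\emph{Condition (1).} I must show $\lambda^{(p)}(\mathcal{L}_{q,k}) \geq \ell^{1/p}$ with $\ell = 2k$. The standing hypothesis $h \leq q/2$ together with $h = \lfloor (1+\epsilon)k\rfloor \geq k$ gives $k \leq q/2$. I would nevertheless state explicitly that $k \leq q/2$ is assumed, since this is the only hypothesis the lemma requires. Lemma~\ref{lem:BP-min-dist} then gives $\lambda^{(p)}(\mathcal{L}_{q,k}) \geq (2k)^{1/p} = \ell^{1/p}$ directly. Note also that $\mathcal{L}_{q,k} \supseteq q\mathbb{Z}^q$ is an integer lattice and $\mathbf{y}\in\mathbb{Z}^q$, so the remaining data of Definition~\ref{def:locally-dense} are of the required type.

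\emph{Containment $S_2(\mathbf{y},h)\subseteq V$.} Let $\mathbf{x}\in S_2(\mathbf{y},h)$.

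First, $\mathbf{x}$ lies in the coset. Since $\mathbf{x}$ is binary, $\mathbf{x}\in\mathbb{Z}^q$, and $H_q(k)\mathbf{x} = \mathbf{u} = H_q(k)\mathbf{y}$ in $\mathbb{F}_q^k$. By linearity $H_q(k)(\mathbf{x}-\mathbf{y}) = \mathbf{0}$, so $\mathbf{x}-\mathbf{y}\in\mathcal{L}_{q,k}$. Hence $\mathbf{x}\in\mathbf{y}+\mathcal{L}_{q,k}$; this is exactly the description of the coset given in the preceding proposition.

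Second, $\mathbf{x}$ lies in the ball. Because $\mathbf{x}$ has entries in $\{0,1\}$ and $\|\mathbf{x}\|_1 = h$, exactly $h$ coordinates equal $1$, so
\[
\|\mathbf{x}\|_p = \Bigl(\sum_i |x_i|^p\Bigr)^{1/p} = h^{1/p}.
\]
By definition $h = \lfloor \alpha^p (2k)\rfloor \leq \alpha^p(2k)$, and $t\mapsto t^{1/p}$ is increasing on $[0,\infty)$, so
\[
\|\mathbf{x}\|_p = h^{1/p} \leq \alpha(2k)^{1/p}.
\]
Therefore $\mathbf{x}\in B(0,\alpha(2k)^{1/p})$, and combining the two steps gives $\mathbf{x}\in V$.

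There is no substantive obstacle here. The only points needing care are confirming that the hypothesis $k\le q/2$ of Lemma~\ref{lem:BP-min-dist} holds, and keeping the ball radius as $\alpha\ell^{1/p}$ with $\ell=2k$ rather than $\alpha\lambda^{(p)}(\mathcal{L}_{q,k})$. This matters because Definition~\ref{def:locally-dense} measures the radius in terms of $\ell$, and the exact minimum distance is unknown.
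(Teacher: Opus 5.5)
Your proof is correct and follows the paper's argument. Condition (1) comes directly from Lemma~\ref{lem:BP-min-dist}, using $k \le h \le q/2$, and the containment holds because each $\mathbf{x}\in S_2(\mathbf{y},h)$ lies in the coset by definition and satisfies $\|\mathbf{x}\|_p = h^{1/p} \le \alpha(2k)^{1/p}$, since $h=\lfloor \alpha^p(2k)\rfloor$; you simply spell out the hypothesis check $k\le q/2$ and the coset membership, which the paper leaves implicit.
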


In order to show that $V$ satisfies Condition (2) of Definition~\ref{def:locally-dense}, it is sufficient to show that the linear projection to the first $r$ coordinates, when restricted to the subset $S_2(\mathbf{y}, h)$, is surjective onto $\{0, 1\}^r$. That is, we want to show that 
$$\{0, 1\}^r \subseteq A(S_2(\mathbf{y}, h)).$$ 
To achieve this, we first treat the simpler case of estimating $|S_2(\mathbf{y}, h)|$, showing that it is at least subexponential in $n=q$. The stronger version incorporating the linear projection will be handled in the final section. 

\begin{Rmk} 
If one applies the Weil bound to estimate $|S_2(\mathbf{y}, h)|$, one only obtains non-trivial information for $\epsilon > 1$, which implies $\alpha > 1$. However, this is of no help for our goals. By Theorem~\ref{thm:reduction}, we must work with a constant $\alpha \in (2^{-1/p}, 1)$ (i.e., $0 < \epsilon < 1$) to prove the conjecture of van Emde Boas. Furthermore, to de-randomize the main result in~\cite{BP23}, we need to take the constant $\alpha$ arbitrarily close to $2^{-1/p}$, which means taking $\epsilon$ to be an arbitrarily small positive constant. 
\end{Rmk}

As mentioned, our first goal is to show that the cardinality of the set $S_2(\mathbf{y}, h)$ is at least subexponential in $q$; that is, $|S_2(\mathbf{y}, h)| \geq q^{\delta k}$ for some constant $\delta > 0$, where $h = \lfloor(1+\epsilon)k\rfloor$ and $k = \lfloor q^{\epsilon_1}\rfloor$ with $0 < \epsilon_1 < 1$. To do so, we relate the quantity $|S_2(\mathbf{y}, h)|$ to the number of $\mathbb{F}_q$-rational points on a certain quasi-projective algebraic variety over $\mathbb{F}_q$, and then apply deep results from arithmetic geometry, specifically Deligne's theorem on the Weil conjectures. 

Recall that $\mathbb{F}_q = \{a_1, \dots, a_q\}$ with a fixed ordering. Let $\mathbf{x} = (x_1, \dots, x_q)^T \in S_2(\mathbf{y}, h)$. Since $\mathbf{x}$ is a binary vector with exactly $h$ coordinates equal to $1$ and the rest equal to $0$, we can write $x_{i_j} = 1$ for $j = 1, \dots, h$, where $1 \leq i_1 < \dots < i_h \leq q$. These indices $\{i_1, \dots, i_h\}$ uniquely determine the elements $\{a_{i_1}, \dots, a_{i_h}\}$ in $\mathbb{F}_q$. By definition of the matrix $H_q(k)$, we compute 
$$H_q(k)\mathbf{x} = \left(h, \sum_{j=1}^h a_{i_j}, \sum_{j=1}^h a_{i_j}^2, \dots, \sum_{j=1}^h a_{i_j}^{k-1}\right)^T.$$
The first coordinate on both sides is equal to $h$. Thus, the syndrome equation 
$$H_q(k)\mathbf{x} = \mathbf{u} = \begin{pmatrix} h \\ h_1 \\ \vdots \\ h_{k-1} \end{pmatrix}$$ 
reduces to the following system of equations: 
$$\sum_{j=1}^h a_{i_j} = h_1, \quad \sum_{j=1}^h a_{i_j}^2 = h_2, \quad \dots, \quad \sum_{j=1}^h a_{i_j}^{k-1} = h_{k-1}.$$

\begin{Def} 
Let $W = \{ (1, a, \dots, a^{k-1})^T : a \in \mathbb{F}_q \} \subset \mathbb{F}_q^k$ be the set of the $q$ column vectors of the matrix $H_q(k)$. 
\end{Def}

\begin{Def} 
Let 
$$\mathcal{N}_h(q) := \left\{ (\mathbf{w}_1, \dots, \mathbf{w}_h) \in W^h : \sum_{i=1}^h \mathbf{w}_i = \mathbf{u} \right\}, \quad N_h(q) := |\mathcal{N}_h(q)|.$$ 
$$\mathcal{N}_h^*(q) := \left\{ (\mathbf{w}_1, \dots, \mathbf{w}_h) \in W^h : \sum_{i=1}^h \mathbf{w}_i = \mathbf{u}, \ \mathbf{w}_i \text{ distinct} \right\}, \quad N_h^*(q) := |\mathcal{N}_h^*(q)|.$$ 
\end{Def}

Each vector $\mathbf{x} \in S_2(\mathbf{y}, h)$ yields a unique solution $(\mathbf{w}_{i_1}, \dots, \mathbf{w}_{i_h}) \in W^h$ in $\mathcal{N}_h^*(q)$, where 
$$\mathbf{w}_{i_j} = \begin{pmatrix} 1 \\ a_{i_j} \\ \vdots \\ a_{i_j}^{k-1} \end{pmatrix} \in W, \quad 1 \leq j \leq h.$$ 
Since $i_1 < \dots < i_h$, this solution under the permutation of the indices yields $h!$ distinct solutions $(\mathbf{w}_1, \dots, \mathbf{w}_h)$ in $\mathcal{N}_h^*(q)$. Distinct vectors $\mathbf{x} \in S_2(\mathbf{y}, h)$ yield distinct solutions in $\mathcal{N}_h^*(q)$ because each $\mathbf{x}$ corresponds to a distinct subset $\{a_{i_1}, \dots, a_{i_h}\}$ of $\mathbb{F}_q$. Thus, each vector $\mathbf{x} \in S_2(\mathbf{y}, h)$ contributes exactly $h!$ to $N_h^*(q)$, giving the identity 
$$|S_2(\mathbf{y}, h)| = \frac{1}{h!} N_h^*(q).$$
To establish our main theorem, we must prove a sharp lower bound for $N_h^*(q)$. Clearly, $N_h^*(q) \leq N_h(q)$. A natural stepping stone is to first prove a large lower bound for $N_h(q)$. 

\begin{Def} 
Let $X_{k,h,\mathbf{u}}$ be the affine algebraic variety in the affine space $\mathbb{A}^h$ defined by 
$$X_{k,h,\mathbf{u}} : \sum_{i=1}^h x_i^j = h_j, \quad 1 \leq j \leq k-1.$$ 
Let $X_{k,h,\mathbf{u}}^*$ be the open subvariety of $X_{k,h,\mathbf{u}}$ consisting of points with pairwise distinct coordinates; that is, 
$$X_{k,h,\mathbf{u}}^* := X_{k,h,\mathbf{u}} \cap \bigcap_{1 \leq i_1 < i_2 \leq h} \{ x_{i_1} - x_{i_2} \neq 0 \}.$$
\end{Def}

This affine variety $X_{k,h,\mathbf{u}}$ is defined by $k-1$ equations in $h$ variables. Because we choose $h = \lfloor (1+\epsilon)k \rfloor$ to be significantly larger than $k$, one expects that the system has many $\mathbb{F}_q$-rational points for large $q$; roughly on the order of 
$$q^{h-(k-1)} = q^{\lfloor \epsilon k +1 \rfloor}$$ 
rational points, which is subexponential in $q$ for $k = \lfloor q^{\epsilon_1} \rfloor$. 

By definition, we observe that $N_h(q)$ is precisely the number of $\mathbb{F}_q$-rational points on $X_{k,h,\mathbf{u}}$. Similarly, $N_h^*(q)$ is the number of $\mathbb{F}_q$-rational points on $X_{k,h,\mathbf{u}}^*$. In summary, we have established the following relations: 
\begin{Prop} 
We have 
$$N_h(q) = |X_{k,h,\mathbf{u}}(\mathbb{F}_q)|, \quad N_h^*(q) = |X_{k,h,\mathbf{u}}^*(\mathbb{F}_q)|, \quad |S_2(\mathbf{y}, h)| = \frac{1}{h!} |X_{k,h,\mathbf{u}}^*(\mathbb{F}_q)|.$$ 
\end{Prop}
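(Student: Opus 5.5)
The three identities follow by matching each object with the set it counts, so the plan is a direct unwinding of the definitions. The map
$$\varphi : \mathbb{F}_q \to W,\qquad a \mapsto (1, a, \dots, a^{k-1})^T$$
is a bijection. It is surjective by the definition of $W$, and injective because the second coordinate recovers $a$ (we have $k \geq 2$ since $k>1$).

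For the first identity, apply $\varphi$ coordinatewise to get a bijection $\mathbb{F}_q^h \to W^h$, sending $(x_1,\dots,x_h)$ to $(\mathbf{w}_1,\dots,\mathbf{w}_h)$. The condition $\sum_i \mathbf{w}_i = \mathbf{u}$ splits coordinate by coordinate.
\begin{itemize}
\item The zeroth coordinate reads $\sum_{i=1}^h 1 = h$. This holds automatically, since the first entry of $\mathbf{u}$ is $h$.
\item The $j$-th coordinate, for $1 \le j \le k-1$, reads $\sum_{i=1}^h x_i^j = h_j$. These are exactly the defining equations of $X_{k,h,\mathbf{u}}$.
\end{itemize}
Hence $\mathcal{N}_h(q)$ corresponds bijectively to $X_{k,h,\mathbf{u}}(\mathbb{F}_q)$, which gives $N_h(q) = |X_{k,h,\mathbf{u}}(\mathbb{F}_q)|$.

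For the second identity, use injectivity of $\varphi$: the $\mathbf{w}_i$ are pairwise distinct if and only if the $x_i$ are pairwise distinct. That is precisely the open condition defining $X_{k,h,\mathbf{u}}^*$. So the bijection restricts to one between $\mathcal{N}_h^*(q)$ and $X^*_{k,h,\mathbf{u}}(\mathbb{F}_q)$, and $N_h^*(q) = |X^*_{k,h,\mathbf{u}}(\mathbb{F}_q)|$.

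For the third identity, I would make the $h!$-to-one correspondence precise rather than just quote it.
\begin{itemize}
\item Consider the map sending an ordered tuple $(x_1,\dots,x_h) \in X^*_{k,h,\mathbf{u}}(\mathbb{F}_q)$ to the binary vector $\mathbf{x} \in \{0,1\}^q$ that has $1$ exactly at the indices $i$ with $a_i \in \{x_1,\dots,x_h\}$.
\item Because the coordinates are distinct, $\mathbf{x}$ has Hamming weight exactly $h$.
\item By the computation of $H_q(k)\mathbf{x}$ given earlier, $H_q(k)\mathbf{x} = (h, \sum_j x_j, \dots, \sum_j x_j^{k-1})^T = \mathbf{u}$. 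So $\mathbf{x} \in S_2(\mathbf{y},h)$.
\item Conversely, each $\mathbf{x} \in S_2(\mathbf{y},h)$ has a support of size $h$ whose elements satisfy the power-sum equations. Its preimages are exactly the $h!$ orderings of that support, and all of them lie in $X^*_{k,h,\mathbf{u}}(\mathbb{F}_q)$.
\end{itemize}
So the map is surjective with every fiber of size exactly $h!$. This gives $|S_2(\mathbf{y},h)| = \frac{1}{h!}|X^*_{k,h,\mathbf{u}}(\mathbb{F}_q)|$.

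No step is genuinely hard. The only point needing care is that the zeroth-row equation $h = h$ is read in $\mathbb{F}_q$, where it is automatically satisfied, so it imposes no condition. Then $X_{k,h,\mathbf{u}}$ correctly uses only the $k-1$ equations with $1 \le j \le k-1$.
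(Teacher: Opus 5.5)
Your proposal is correct and takes the same approach as the paper. The first two identities are read off from the definitions via the bijection $a \mapsto (1,a,\dots,a^{k-1})^T$, and the third comes from the $h!$-to-one correspondence between ordered tuples of distinct coordinates and supports of vectors in $S_2(\mathbf{y},h)$. You are slightly more careful than the paper in checking explicitly that every tuple in $X^*_{k,h,\mathbf{u}}(\mathbb{F}_q)$ comes from some $\mathbf{x}\in S_2(\mathbf{y},h)$, a surjectivity the paper leaves implicit.
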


These cardinalities have been analyzed in the literature using Weil's bound for character sums; see, for example,~\cite{LMRW20} and~\cite{BP23}. However, methods relying solely on the Weil bound only yield non-trivial estimates when $\epsilon > 1$, making them too weak to handle our case of interest, $0 < \epsilon < 1$. 

To overcome this limitation, we will analyze these points using Deligne's theorem on the Weil conjectures. To apply this machinery, we must first understand the geometry of the affine variety $X_{k,h,\mathbf{u}}$ and its compactification, which turns out to be remarkably well-behaved. These varieties are complete intersections in both affine and projective spaces with very mild singularities, allowing the full strength of Deligne's theorem to be brought to bear. The more complex open subvariety $X_{k,h,\mathbf{u}}^*$ will then be handled in the subsequent sections via inclusion-exclusion sieving. 

% Add this to your preamble if not already present:
% \DeclareMathOperator{\Sing}{Sing}
% \DeclareMathOperator{\dimen}{dim} % or use standard \dim

\section{Geometry of the Variety $X_{k, h, \mathbf{u}}$}

\begin{Def} 
Let $\overline{X}_{k,h, \mathbf{u}}$ be the projective variety in the projective space $\mathbb{P}^h$ defined over $\mathbb{F}_q$ by 
\[
\overline{X}_{k,h, \mathbf{u}}: \sum_{i=1}^h x_i^j = h_j x_{h+1}^j, \quad 1 \leq j \leq k-1.
\]
Let $\overline{X}_{k,h}$ be the projective variety in $\mathbb{P}^{h-1}$ defined over $\mathbb{F}_q$ by 
\[
\overline{X}_{k,h}: \sum_{i=1}^h x_i^j = 0, \quad 1 \leq j \leq k-1.
\]
\end{Def}

The variety $\overline{X}_{k,h, \mathbf{u}}$ is the projective closure of the affine variety $X_{k,h, \mathbf{u}}$ in $\mathbb{P}^h$. Let $H_{\infty} \subset \mathbb{P}^h$ be the hyperplane at infinity defined by the equation $x_{h+1} = 0$. Under the natural identification of $H_{\infty}$ with $\mathbb{P}^{h-1}$, the projective variety $\overline{X}_{k,h}$ is realized as the intersection of $\overline{X}_{k,h,\mathbf{u}}$ with $H_{\infty}$:
\[
\overline{X}_{k,h} = \overline{X}_{k,h, \mathbf{u}} \cap H_{\infty}.
\]
Thus, we have the open-closed decomposition
\[
X_{k,h, \mathbf{u}} = \overline{X}_{k,h, \mathbf{u}} \setminus H_{\infty} = \overline{X}_{k,h, \mathbf{u}} \setminus \overline{X}_{k,h}.
\]
This reduces the point counting on $X_{k,h, \mathbf{u}}$ to estimating the number of $\mathbb{F}_q$-rational points on the two projective varieties $\overline{X}_{k,h, \mathbf{u}}$ and $\overline{X}_{k,h}$: 
\[
N_h(q) = |X_{k,h, \mathbf{u}}(\mathbb{F}_q)| = |\overline{X}_{k,h, \mathbf{u}}(\mathbb{F}_q)| - |\overline{X}_{k,h}(\mathbb{F}_q)|.
\]
To understand these two terms, we investigate the geometry of the projective varieties $\overline{X}_{k,h, \mathbf{u}}$ and $\overline{X}_{k,h}$.

\begin{Prop}\label{comp} 
Let $1 \leq k \leq h < q$ with $q$ being a prime. 
The projective variety $\overline{X}_{k,h, \mathbf{u}}$ over $\mathbb{F}_q$ is a complete intersection of dimension $h-k+1$ in $\mathbb{P}^h$. 
The projective variety $\overline{X}_{k,h}$ over $\mathbb{F}_q$ is a complete intersection of dimension $h-k$ in $\mathbb{P}^{h-1}$.  
\end{Prop}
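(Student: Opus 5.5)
Both assertions reduce to a dimension count: a projective variety in $\mathbb{P}^N$ cut out by $k-1$ homogeneous equations is a complete intersection exactly when every irreducible component has dimension $N-(k-1)$. Krull's principal ideal theorem already gives every component dimension at least $N-k+1$, so only the upper bound needs proof. The plan is to do this first for the variety at infinity $\overline{X}_{k,h}\subset\mathbb{P}^{h-1}$ and then deduce the result for $\overline{X}_{k,h,\mathbf{u}}\subset\mathbb{P}^h$.

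\emph{Step 1: $\overline{X}_{k,h}$.} Work over $\overline{\mathbb{F}}_q$ with the affine cone $C\subset\mathbb{A}^h$ defined by the power sums $p_j=\sum_i x_i^j$, $1\le j\le k-1$. Since $k-1<q$, the integers $1,\dots,k-1$ are invertible in $\mathbb{F}_q$. Newton's identities therefore show that the vanishing of $p_1,\dots,p_{k-1}$ is equivalent to the vanishing of the elementary symmetric functions $e_1,\dots,e_{k-1}$. So $C$ is the preimage of the linear subspace $\{e_1=\dots=e_{k-1}=0\}\subset\mathbb{A}^h$ under the map $\pi=(e_1,\dots,e_h):\mathbb{A}^h\to\mathbb{A}^h$. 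This map is finite: the $x_i$ are roots of $T^h-e_1T^{h-1}+\cdots$, so $\overline{\mathbb{F}}_q[x]$ is integral over the ring of symmetric functions. A finite morphism does not raise dimension, so every component of $C$ has dimension at most $h-(k-1)$. Projectivizing, every component of $\overline{X}_{k,h}$ has dimension at most $h-k$. Krull gives the reverse inequality, so the $k-1$ power sums form a regular sequence in the graded polynomial ring; in particular $C$ is Cohen--Macaulay and unmixed, which is what ``complete intersection'' means. One point needs care: the statement also allows $k=h$, where $\dim=0$ and the cone is the origin alone. The argument still works there, since the cone is then a point and the projective variety is empty, of ``dimension'' $h-k=0$ in the degenerate convention; I would check whether the authors intend $k<h$, as in the application $h=\lfloor(1+\epsilon)k\rfloor$.

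\emph{Step 2: $\overline{X}_{k,h,\mathbf{u}}$.} The same argument applies to the affine cone in $\mathbb{A}^{h+1}$ with coordinates $(x,x_{h+1})$. Consider the map
\[
(x,x_{h+1})\mapsto (e_1(x),\dots,e_h(x),x_{h+1}).
\]
It is again finite. By Newton's identities with $1,\dots,k-1$ invertible, the equations $p_j(x)=h_jx_{h+1}^j$ translate into $e_j(x)=g_j x_{h+1}^j$ for constants $g_j$ determined by $h_1,\dots,h_{k-1}$; one solves recursively, and this is where the inductive use of Newton's identities happens. These equations define in the target a graph-type subvariety of dimension $(h+1)-(k-1)$. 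Its preimage therefore has all components of dimension at most $h-k+2$, so after projectivizing every component has dimension at most $h-k+1$. Krull again gives equality. Alternatively, one can observe that every component of $\overline{X}_{k,h,\mathbf{u}}$ has dimension at least $h-k+1$ and meets $H_\infty$ in a subset of $\overline{X}_{k,h}$, which has dimension $h-k$ by Step 1. Since a projective variety of dimension at least $1$ meets every hyperplane, each component has dimension at most $h-k+1$. Components of dimension $0$ would violate the Krull lower bound when $h\ge k$.

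The main obstacle is making the Newton-identity translation rigorous in characteristic $q$, in particular the requirement $k-1<q$. It is also important that the conclusion concerns the equations as written, so that they form a regular sequence, and not merely the set-theoretic dimension. I would verify this through the graded Cohen--Macaulay property of the polynomial ring: a homogeneous sequence cutting out the expected codimension is regular.
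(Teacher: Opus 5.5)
Your argument is correct, but it gets the upper bound by a different route than the paper.

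\textbf{The paper's route.} The paper keeps the power-sum equations and appends $\sum_i x_i^j=h_jx_{h+1}^j$ for $k\le j\le h$. Using Newton's identities up to degree $h$ (hence $h<q$), it shows the augmented variety $\overline{X}_{h+1,h,\mathbf{u}}\subset\mathbb{P}^h$ is a finite set of at most $h!$ points, matching B\'ezout. Since each hypersurface section lowers dimension by at most one, this gives $\dim\overline{X}_{k,h,\mathbf{u}}\le h-k+1$. The variety $\overline{X}_{k,h}$ is handled ``similarly''.

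\textbf{Your route.} You pull back along the finite quotient map $x\mapsto(e_1(x),\dots,e_h(x))$, with $x_{h+1}$ adjoined for $\overline{X}_{k,h,\mathbf{u}}$, after rewriting only the first $k-1$ equations in terms of $e_1,\dots,e_{k-1}$. Underneath it is the same fact: the $x_i$ are integral over the elementary symmetric functions, which is exactly what makes the paper's augmented variety finite. Your packaging has two advantages:
\begin{enumerate}
\item It only needs $1,\dots,k-1$ to be invertible, so it proves the statement under the weaker hypothesis $k\le q$, with no restriction on $h$.
\item Your appeal to Cohen--Macaulayness makes explicit that the defining equations form a regular sequence. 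That is the scheme-theoretic complete-intersection property later fed into Proposition~\ref{Del}.
\end{enumerate}
Your alternative for Step 2 is also valid. Every component has dimension at least $h-k+1\ge 1$, meets $H_\infty$ in a subset of $\overline{X}_{k,h}$, and so has dimension at most $h-k+1$. This reverses the paper's order, deducing the projective-closure case from the case at infinity.

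\textbf{A wrong side remark.} For $k=h$ the cone is not the origin and $\overline{X}_{h,h}$ is not empty; this does not affect your main argument. The $h-1$ equations $p_1=\dots=p_{h-1}=0$ give $e_1=\dots=e_{h-1}=0$. So the cone is the preimage of the $e_h$-axis, a union of lines. Hence $\overline{X}_{h,h}$ consists of the $(h-1)!$ points whose coordinates are a permutation of the $h$-th roots of unity; for example, it is the point $(1:-1)$ when $h=2$. This is a genuine zero-dimensional complete intersection. An empty set would have dimension $-1$ and would contradict the statement, so your remark was also internally inconsistent. The cone collapses to the origin only when $k=h+1$. Your general argument already covers $k=h$ correctly, so there is no need to restrict to $k<h$.
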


\begin{proof}
Each time we intersect a projective variety with a hypersurface defined by a single homogeneous equation, the dimension drops by at most $1$. Since $\overline{X}_{k,h, \mathbf{u}}$ is cut out by $k-1$ homogeneous equations in $\mathbb{P}^h$, we have the lower bound
\[
\dim(\overline{X}_{k,h, \mathbf{u}}) \geq h - (k-1) = h-k+1.
\]
By definition, equality holds if and only if $\overline{X}_{k,h, \mathbf{u}}$ is a complete intersection. Thus, it suffices to prove the corresponding upper bound
\[
\dim(\overline{X}_{k,h, \mathbf{u}}) \leq h-k+1.
\]   
The dimension-dropping property implies the chain of inequalities
\[
\dim(\overline{X}_{h+1,h, \mathbf{u}}) \geq \dim(\overline{X}_{h,h, \mathbf{u}}) - 1 \geq \dots \geq \dim(\overline{X}_{k,h, \mathbf{u}}) - (h-k+1).
\]   
Hence, it is enough to show that the left-hand term is zero; that is, the variety $\overline{X}_{h+1,h, \mathbf{u}} \subset \mathbb{P}^h$ defined by the system
\[
\sum_{i=1}^h x_i^j = h_j x_{h+1}^j, \quad 1 \leq j \leq h
\]
is a zero-dimensional projective variety. 

We analyze the solutions in $\mathbb{P}^h(\overline{\mathbb{F}}_q)$ by splitting them into two cases based on the coordinate $x_{h+1}$:

\paragraph{Case 1 ($x_{h+1} = 0$):} The system reduces to $\sum_{i=1}^h x_i^j = 0$ for all $1 \leq j \leq h$. Because $h < q$, the integers $1, \dots, h$ are coprime to the characteristic of the field. By Newton's identities, the vanishing of these power sums implies that all elementary symmetric polynomials of the variables $x_1, \dots, x_h$ must vanish. This forces $x_1 = x_2 = \dots = x_h = 0$. Since the trivial point $(0, \dots, 0)$ does not define a point in the projective space $\mathbb{P}^h$, there are no solutions in this case.

\paragraph{Case 2 ($x_{h+1} \neq 0$):} We normalize the coordinate by setting $x_{h+1} = 1$. The system becomes:
\[
\sum_{i=1}^h x_i^j = h_j \quad (1 \leq j \leq h).
\]
Again appealing to Newton's identities (which is valid since $h < q$), the power sums $h_1, \dots, h_h$ uniquely determine the elementary symmetric polynomials $e_1, \dots, e_h$ of the variables $x_1, \dots, x_h$. Consequently, the coordinates $\{x_1, \dots, x_h\}$ are uniquely determined as the roots of the single polynomial:
\[
f(T) = T^h - e_1 T^{h-1} + e_2 T^{h-2} - \dots + (-1)^h e_h \in \mathbb{F}_q[T].
\]
Over the algebraic closure $\overline{\mathbb{F}}_q$, $f(T)$ has exactly $h$ roots (counted with multiplicity). Therefore, any coordinate vector $(x_1, \dots, x_h)$ must be a permutation of these roots, which yields at most $h!$ distinct points in the projective space.

Since $\overline{X}_{h+1,h,\mathbf{u}}$ contains at most $h!$ points over $\overline{\mathbb{F}}_q$, its dimension is $0$. 
This proves that $\overline{X}_{k,h,\mathbf{u}}$ is a complete intersection of dimension $h-k+1$. 
In particular, $\overline{X}_{h+1,h,\mathbf{u}}$ is a complete intersection of dimension $0$, defined by $h$ homogeneous polynomial equations of degrees $\{1, 2, \cdots, h\}$. Bezout's well known theorem then implies that $\overline{X}_{h+1,h,\mathbf{u}}$ has at most $h!$ points over $\overline{\mathbb{F}}_q$, 
in fact, exactly $h!$ points counting multiplicity, consistent with the above self-contained proof. 

The proof for the variety $\overline{X}_{k,h}$ follows by a similar induction-downward argument starting from $\mathbb{P}^{h-1}$.
\end{proof}

For a variety $X$, let $\Sing(X)$ denote the singular locus of $X$. If $X$ is smooth, then $\Sing(X) = \emptyset$ and we set $\dim \Sing(X) = -1$ by convention. 

\begin{Lem}\label{singular} 
Let $X$ be a complete intersection of dimension $m \geq 0$ in some projective space $\mathbb{P}^n$. 
Let $Z$ be a hyperplane in $\mathbb{P}^n$. Assume that the hyperplane section $X \cap Z$ is a complete intersection of dimension $m-1$ in $\mathbb{P}^n$. Then, 
\[
\lvert \dim \Sing(X) - \dim \Sing(X \cap Z) \rvert \leq 1.
\]
\end{Lem}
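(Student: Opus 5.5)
The plan is to prove the two inequalities
\[
\dim \Sing(X\cap Z)\ \geq\ \dim\Sing(X)-1 \qquad\text{and}\qquad \dim\Sing(X\cap Z)\ \leq\ \dim\Sing(X)+1
\]
separately. Throughout I use the Jacobian description of singular loci of complete intersections. If $X$ is cut out by $F_1,\dots,F_{n-m}$, then $P\in X$ is singular exactly when the differentials $dF_1(P),\dots,dF_{n-m}(P)$ are linearly dependent. Since $X\cap Z$ is assumed to be a complete intersection of dimension $m-1$, it is cut out by $F_1,\dots,F_{n-m},L$, where $L$ is the linear form defining $Z$. So $P\in X\cap Z$ is singular exactly when $dF_1(P),\dots,dF_{n-m}(P),L$ are dependent.

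\textbf{Lower bound.} First I show $\Sing(X)\cap Z\subseteq \Sing(X\cap Z)$. This is immediate from the Jacobian criterion: if the $dF_i(P)$ are already dependent, they stay dependent after adjoining $L$. Equivalently, $X\cap Z$ is a Cartier divisor on $X$, and if $\mathcal{O}_X/(L)$ is regular at $P$ then so is $\mathcal{O}_{X,P}$. Next I use the projective dimension theorem: a hyperplane meets every projective variety of dimension $s\ge1$ in dimension $\ge s-1$. Hence if $\dim\Sing(X)\ge1$ we get $\dim\Sing(X\cap Z)\ge \dim \Sing(X)-1$. If $\dim\Sing(X)\le 0$, the inequality is trivial with the convention $\dim\emptyset=-1$.

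\textbf{Upper bound.} Decompose $\Sing(X\cap Z)=(\Sing(X)\cap Z)\cup C$, where
\[
C=\{P\in (X\setminus\Sing X)\cap Z:\ T_PX\subseteq Z\}
\]
is the contact locus of $Z$ along the smooth part of $X$. The first piece has dimension $\le\dim\Sing(X)$, so it remains to bound $\dim\overline{C}\le \dim\Sing(X)+1$. The key claim is that every irreducible curve $\Gamma\subseteq\overline{C}$ meets $\Sing(X)$. Granting this, let $c=\dim\overline C$ and suppose $c\ge 2+\dim\Sing(X)$. Cut $\overline C$ with $c-1$ general hyperplanes. The resulting curve meets $\Sing(X)$, so $\dim(\overline C\cap\Sing X)\ge c-1 > \dim \Sing(X)$, which is a contradiction. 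The same argument with $\Sing X=\emptyset$ shows that $C$ is finite when $X$ is smooth.

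\textbf{Main obstacle.} The hard step is the key claim. Along a curve $\Gamma$ of smooth points on which the fixed hyperplane $Z$ is tangent, the Gauss map $P\mapsto T_PX$, or dually the conormal map, is degenerate. I would try the classical argument that the Gauss map of a complete intersection is finite on its smooth locus, so a projective curve along which it is contracted cannot be complete inside $X\setminus \Sing X$. This is where characteristic enters. The argument is valid when $X$ is reflexive, for instance in characteristic $0$, or when $p=q$ exceeds the degrees involved, so that the monodromy/biduality argument goes through. In positive characteristic there are genuine counterexamples of Fermat/Hermitian type in degree $p+1$. So I would either state the lemma under the standing hypothesis that all defining degrees are $<q$, which is the situation of Proposition~\ref{comp} since the degrees are $\le k-1<q$, and verify reflexivity there, or check the claim directly for the explicit power-sum varieties $\overline X_{k,h,\mathbf u}$ and $\overline X_{k,h}$ via their Vandermonde-type Jacobians.
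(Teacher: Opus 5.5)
Your lower bound is correct. It is essentially the argument behind Katz's Lemma~3, which the paper cites: $\Sing(X)\cap Z\subseteq \Sing(X\cap Z)$, and a hyperplane meets a projective set of dimension $s\ge 1$ in dimension $\ge s-1$. Your reduction of the upper bound to the key claim (every irreducible curve $\Gamma\subseteq\overline{C}$ meets $\Sing(X)$) is also sound.

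The gap is that this key claim \emph{is} the whole content of the Zak--Katz bound the paper quotes from Katz's appendix to Hooley, and you do not prove it. The route you sketch would not prove it either. When $\operatorname{codim}X\ge 2$, a curve along which $Z$ is tangent need not be contracted by the Gauss map $X^{\mathrm{sm}}\to G(m,n)$, because the spaces $T_PX$ may move inside $Z$. You only learn that $\Gamma$ maps into the Schubert variety $\{T\subseteq Z\}$, and finiteness of the Gauss map gives nothing there.

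Your fallback does not help. Adding a hypothesis on the degrees and ``verifying reflexivity'', or checking the power-sum varieties by hand, changes the statement rather than proving it. Your assertion that there are positive-characteristic counterexamples is also mistaken. The hypersurfaces $\sum x_i^{p+1}=0$ are non-reflexive, but their contact loci are finite: the Gauss map $(x_0:\dots:x_n)\mapsto(x_0^p:\dots:x_n^p)$ is bijective on points, so each tangent hyperplane touches at exactly one point.

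The missing idea is to work on the projectivized conormal bundle, where the contact curve really is contracted to a point.
First replace $\mathbb{P}^n$ by the linear subspace cut out by the linear equations among the $F_i$. This is harmless since $X\not\subseteq Z$: $Z$ still restricts to a hyperplane, and $T_PX\subseteq Z$ is unaffected. So $X=V(F_1,\dots,F_c)$ with all $d_i=\deg F_i\ge 2$. Put $E=\bigoplus_i\mathcal{O}_X(d_i-1)$, which is ample. Let $\pi:\mathbb{P}(E)\to X$ be the bundle whose points over $P$ are classes $[\lambda]$ with $\lambda\in E_P^\vee=N^\vee_{X,P}(1)$.

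Over $X^{\mathrm{sm}}$ the gradients $\nabla F_i(P)$ are independent. Hence $\phi(P,[\lambda])=[\sum_i\lambda_i\nabla F_i(P)]$ defines a morphism $\pi^{-1}(X^{\mathrm{sm}})\to(\mathbb{P}^n)^\vee$ with $\phi^*\mathcal{O}(1)=\mathcal{O}_{\mathbb{P}(E)}(1)$.

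Now let $\Gamma\subseteq C$ be a complete curve. For each $P\in\Gamma$ there is a unique $[\lambda]$ with $\phi(P,[\lambda])=[Z]$. These points form a complete curve $\tilde\Gamma\subseteq\pi^{-1}(X^{\mathrm{sm}})$ that $\phi$ contracts to $[Z]$. Hence $\deg\mathcal{O}_{\mathbb{P}(E)}(1)|_{\tilde\Gamma}=0$, contradicting ampleness.

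This argument is characteristic-free and needs no reflexivity. Combined with your slicing step, it gives the upper bound in the generality stated, including characteristic $q$, where the paper uses it.
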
  

\begin{proof} 
The upper bound on the hyperplane section,
\[
\dim \Sing(X \cap Z) \leq \dim \Sing(X) + 1,
\]
is Zak's lemma; a proof is provided in Katz's appendix to \cite{Hoo91}. The converse inequality,
\[
\dim \Sing(X) \leq \dim \Sing(X \cap Z) + 1,
\]
is established in \cite[Lemma 3]{Kat99}. 
\end{proof}
  
\begin{Prop}\label{smooth} 
Let $1 \leq k \leq h < q$ with $q$ being a prime. 
The projective variety $\overline{X}_{k,h}$ over $\mathbb{F}_q$ is smooth. In particular, the projective variety $\overline{X}_{k,h, \mathbf{u}}$ over $\mathbb{F}_q$ has at most finitely many isolated singularities. 
\end{Prop}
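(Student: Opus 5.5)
The plan is to verify smoothness of $\overline{X}_{k,h}$ with the Jacobian criterion, and then obtain the statement about $\overline{X}_{k,h,\mathbf{u}}$ from Lemma \ref{singular} applied to the hyperplane section at infinity.

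\textbf{Smoothness of $\overline{X}_{k,h}$.} By Proposition \ref{comp}, $\overline{X}_{k,h}\subset\mathbb{P}^{h-1}$ is a complete intersection of dimension $h-k$ cut out by the $k-1$ forms $F_j=\sum_i x_i^j$, $1\le j\le k-1$. It therefore suffices to show that the $(k-1)\times h$ Jacobian matrix $(j x_i^{j-1})_{j,i}$ has rank $k-1$ at every point $P=(x_1:\dots:x_h)\in\overline{X}_{k,h}(\overline{\mathbb{F}}_q)$.

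Since $k-1<q$, the scalars $j$ are units and may be removed. The remaining matrix $(x_i^{j-1})$ has Vandermonde-type columns $(1,x_i,\dots,x_i^{k-2})^T$, so its rank equals $\min(k-1,s)$, where $s$ is the number of distinct values among $x_1,\dots,x_h$. Thus we must rule out points of $\overline{X}_{k,h}$ with $s\le k-2$.

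Suppose $P$ has distinct values $b_1,\dots,b_s$ with multiplicities $m_1,\dots,m_s$, where $s\le k-2$. The defining equations become $\sum_{t=1}^s m_t b_t^j=0$ for $1\le j\le k-1$. Consider the equations for $j=1,\dots,s$, viewed as a linear system in $(m_1,\dots,m_s)$. Its matrix is $(b_t^j)$, which is $\mathrm{diag}(b_t)$ times a Vandermonde matrix.

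If all $b_t\ne0$, this matrix is invertible. Then $m_t\equiv0 \pmod q$ for every $t$, which is impossible because $1\le m_t\le h<q$.

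If one value, say $b_1$, is $0$, the same argument applied to the nonzero values $b_2,\dots,b_s$ forces them not to exist. Hence $P=(0:\dots:0)$, which is not a projective point. (The same conclusion follows from the Newton-identity argument in Case 1 of Proposition \ref{comp}.)

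This contradiction shows the Jacobian has full rank everywhere, so $\overline{X}_{k,h}$ is smooth and $\dim\Sing(\overline{X}_{k,h})=-1$.

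\textbf{The singularities of $\overline{X}_{k,h,\mathbf{u}}$.} By Proposition \ref{comp}, $X=\overline{X}_{k,h,\mathbf{u}}$ is a complete intersection of dimension $h-k+1$ in $\mathbb{P}^h$. Its section by $Z=H_\infty$ is $\overline{X}_{k,h}$, which is a complete intersection of dimension $h-k$. The hypotheses of Lemma \ref{singular} therefore hold, and it gives
\[
\dim\Sing(X)\le\dim\Sing(\overline{X}_{k,h})+1=0 .
\]
So $\Sing(X)$ is finite, i.e.\ $X$ has at most finitely many isolated singularities.

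The degenerate case $k=1$, where there are no equations and $X=\mathbb{P}^h$, is trivially smooth and can be handled separately.

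\textbf{Main obstacle.} The step needing the most care is the Jacobian rank computation. Two points must be checked: the reduction to counting distinct coordinate values, and that $h<q$ and $k-1<q$ prevent the multiplicities and the factors $j$ from vanishing mod $q$. The rest is a direct citation of Lemma \ref{singular}.
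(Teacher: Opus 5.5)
Your proposal is correct and follows essentially the same route as the paper. The paper likewise uses the Jacobian criterion to reduce to points with at most $k-2$ distinct coordinate values, applies a Vandermonde argument on the nonzero values to force the multiplicities to vanish mod $q$ (contradicting $1\le m_i\le h<q$), and then invokes Lemma \ref{singular} at $H_\infty$ to get $\dim\Sing(\overline{X}_{k,h,\mathbf{u}})\le 0$. Your phrasing of the rank as $\min(k-1,s)$ instead of via $(k-1)$-minors, and your separate treatment of the value $0$, are only cosmetic differences.
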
  

\begin{proof} 
For $k=1$, $\overline{X}_{1,h} = \mathbb{P}^{h-1}$ is trivially smooth. Assume that $2 \leq k \leq h$. 
Let $(x_1, \dots, x_h)$ be a singular point on the projective variety $\overline{X}_{k,h}$. 
By the Jacobian criterion, the $(k-1) \times h$ Jacobian matrix 
\[
J = \begin{pmatrix} 
1 & 1 & \dots & 1 \\
2x_1 & 2x_2 & \dots & 2x_h \\
\vdots & \vdots & \ddots & \vdots \\
(k-1)x_1^{k-2} & (k-1)x_2^{k-2} & \dots & (k-1)x_h^{k-2} 
\end{pmatrix}
\]
must have rank strictly less than $k-1$. Since $k \leq h < q$, the coefficients $j$ for $1 \leq j \leq k-1$ are non-zero in $\mathbb{F}_q$. 

If $\operatorname{rank}(J) < k-1$, then any subset of $k-1$ columns of $J$ must be linearly dependent. Let $\{i_1, \dots, i_{k-1}\} \subseteq \{1, \dots, h\}$ be any choice of $k-1$ column indices. The determinant of the submatrix formed by these columns is:
\[
\det \begin{pmatrix} 
1 & \dots & 1 \\
2x_{i_1} & \dots & 2x_{i_{k-1}} \\
\vdots & \ddots & \vdots \\
(k-1)x_{i_1}^{k-2} & \dots & (k-1)x_{i_{k-1}}^{k-2} 
\end{pmatrix}
= (k-1)! \prod_{1 \leq a < b \leq k-1} (x_{i_b} - x_{i_a}).
\]
Because $k-1 < q$, the factorial $(k-1)!$ is invertible in $\mathbb{F}_q$. Consequently, this determinant vanishes if and only if $x_{i_a} = x_{i_b}$ for some $a < b$. Since this must hold for every choice of $k-1$ coordinates, the set $\{x_1, \dots, x_h\}$ can contain at most $k-2$ distinct values.

Since $(x_1, \dots, x_h)$ defines a point in the projective space, at least one of these values must be non-zero. Let $\{ z_1, \dots, z_e \}$ be the distinct non-zero elements in $\{x_1, \dots, x_h\}$ with corresponding positive multiplicities $\{m_1, \dots, m_e\}$. 
Thus, we have $1 \leq m_1 + \dots + m_e \leq h < q$ and $e \leq k-2$. Because $(x_1, \dots, x_h)$ is on the variety $\overline{X}_{k,h}$, its non-zero coordinates must satisfy:
\[
\sum_{i=1}^e m_i z_i^j = 0, \quad 1 \leq j \leq k-1.
\]  
Since $e \leq k-2 \leq k-1$, we can consider the first $e$ equations:
\[
\begin{pmatrix} 
z_1 & z_2 & \dots & z_e \\
z_1^2 & z_2^2 & \dots & z_e^2 \\
\vdots & \vdots & \ddots & \vdots \\
z_1^e & z_2^e & \dots & z_e^e
\end{pmatrix}
\begin{pmatrix} 
m_1 \\ m_2 \\ \vdots \\ m_e
\end{pmatrix}
= \mathbf{0}.
\]
Factoring out $z_i \neq 0$ from each column yields a standard Vandermonde matrix:
\[
\begin{pmatrix} 
1 & 1 & \dots & 1 \\
z_1 & z_2 & \dots & z_e \\
\vdots & \vdots & \ddots & \vdots \\
z_1^{e-1} & z_2^{e-1} & \dots & z_e^{e-1}
\end{pmatrix}
\begin{pmatrix} 
m_1 z_1 \\ m_2 z_2 \\ \vdots \\ m_e z_e
\end{pmatrix}
= \mathbf{0}.
\]
Since the $z_i$'s are distinct, this Vandermonde matrix is non-singular, which forces $m_i z_i = 0$ in $\mathbb{F}_q$ for all $1 \leq i \leq e$. Since $z_i \neq 0$, we must have $m_i = 0$ in $\mathbb{F}_q$. However, this directly contradicts our assumption that $1 \leq m_i \leq h < q$ for each $i$. Thus, no such singular point can exist, proving that $\overline{X}_{k,h}$ is smooth.

Now, $\overline{X}_{k,h}$ is the hyperplane section $\{ x_{h+1} = 0 \}$ of $\overline{X}_{k,h, \mathbf{u}}$. Both are complete intersections of dimension $h-k$ and $h-k+1$, respectively, in $\mathbb{P}^h$. By Lemma \ref{singular}, $\dim \Sing(\overline{X}_{k,h, \mathbf{u}})$ differs from $\dim \Sing(\overline{X}_{k,h})$ by at most $1$. Since $\overline{X}_{k,h}$ is smooth, its singular locus has dimension $-1$, which implies $\dim \Sing(\overline{X}_{k,h, \mathbf{u}}) \leq 0$. Therefore, the projective variety $\overline{X}_{k,h, \mathbf{u}}$ contains at most finitely many isolated singular points.
\end{proof} 

 \section{Rational Points on the Variety $X_{k,h, \mathbf{u}}$}

We need the following fundamental result on the number  $|X(\mathbb{F}_q)|$ of $\mathbb{F}_q$-rational points on a singular complete intersection $X$ defined over a finite field.  

\begin{Prop}\label{Del} 
Let $X$ be a projective complete intersection of dimension $m\geq 1$ in some projective space $\mathbb{P}^n$ defined over the finite field $\mathbb{F}_q$. Let $s = \dim \operatorname{Sing}(X)$. Then, for every integer $e\geq 1$, we have 
\[
\left| |X(\mathbb{F}_{q^e})| - \frac{q^{e(m+1)}-1}{q^e-1}\right| \leq C q^{\frac{e(m+1+s)}{2}},
\]
where $C$ is a constant independent of the extension degree $e$.   
\end{Prop}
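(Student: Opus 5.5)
We would prove Proposition \ref{Del} with the Grothendieck--Lefschetz trace formula plus Deligne's purity theorem. The only extra input is a vanishing statement for the primitive part of the $\ell$-adic cohomology of $X$ in low degrees. This is essentially Hooley's and Katz's result (Katz's appendix to \cite{Hoo91}); we sketch the route.

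\textbf{Step 1 (trace formula).} Fix a prime $\ell\neq \mathrm{char}(\mathbb{F}_q)$ and write $F$ for geometric Frobenius. Grothendieck's trace formula gives
\[
|X(\mathbb{F}_{q^e})| = \sum_{i=0}^{2m} (-1)^i \operatorname{Tr}\bigl(F^e \mid H^i_c(X_{\overline{\mathbb{F}}_q},\mathbb{Q}_\ell)\bigr).
\]
The sum $\sum_{i=0}^{m} q^{ei} = \frac{q^{e(m+1)}-1}{q^e-1}$ is the contribution of the classes $1,L,\dots,L^m$, where $L$ is the hyperplane class pulled back from $\mathbb{P}^n$. Define the primitive part $P^i$ as the cokernel (or the complement) of the restriction map $H^i(\mathbb{P}^n)\to H^i(X)$. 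Since this map is injective for $i\le 2m$, the error term equals $\sum_i (-1)^i\operatorname{Tr}(F^e\mid P^i)$.

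\textbf{Step 2 (vanishing).} We need $P^i=0$ for all $i\le m+s$. We would argue by induction on $s$ using hyperplane sections, together with Lemma \ref{singular}.
\begin{itemize}
\item If $s=-1$, $X$ is a smooth complete intersection. The weak Lefschetz theorem gives $P^i=0$ for $i\neq m$, and the statement follows.
\item If $s\ge 0$, we first use Lefschetz for the affine complement: $X\setminus(X\cap Z)$ is affine of dimension $m$, so its compactly supported cohomology vanishes below degree $m$. This gives $H^i(X)\cong H^i(X\cap Z)$ for $i<m-1$, and an injection for $i=m-1$.
\item For degrees above $m$, we use Poincaré duality on the smooth locus together with the long exact sequence of the pair $(X,\Sing X)$. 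Since $\dim\Sing X = s$, removing $\Sing X$ alters $H^i_c$ only for $i\le 2s$. Dually, the top-degree primitive cohomology of $X$ agrees with that of the smooth part in degrees $i>m+s$.
\end{itemize}
This is exactly the bookkeeping in Katz's appendix, and we would cite it for the conclusion that the primitive cohomology is concentrated in degrees $m\le i\le m+s$.

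\textbf{Step 3 (weights).} Deligne's Weil II says that every Frobenius eigenvalue on $H^i_c$ of a proper variety has absolute value at most $q^{i/2}$, i.e.\ mixed of weight $\le i$. Hence
\[
\Bigl||X(\mathbb{F}_{q^e})|-\tfrac{q^{e(m+1)}-1}{q^e-1}\Bigr| \le \sum_{i\le m+s}\dim P^i\; q^{ei/2} \le C\,q^{e(m+1+s)/2}.
\]
Here $C=\sum_i \dim P^i$ is a total Betti number and does not depend on $e$. The weight bound alone would give exponent $(m+s)/2$; the statement is weaker, so we can allow the extra factor $q^{1/2}$, which also absorbs any mismatch in the top allowed degree (e.g.\ a range $i\le m+s+1$).

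\textbf{Main obstacle.} The hard part is the vanishing range in Step 2 for singular $X$. Steps 1 and 3 are formal consequences of the trace formula and Deligne's theorem. Rather than reproving the hyperplane-section induction, we would invoke the Hooley--Katz theorem directly, since it is stated in exactly this form with $C$ depending only on $n$, $m$ and the multidegree.
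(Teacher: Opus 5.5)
Your route is the paper's: the paper gives no argument beyond attributing the smooth case to Deligne and the singular case to Hooley--Katz. That is exactly your trace formula, weights, and Hooley--Katz vanishing reduction.

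One claim in your sketch is false, however. The primitive cohomology is concentrated in degrees $m\le i\le m+s+1$, not $m\le i\le m+s$. Accordingly, ``we need $P^i=0$ for all $i\le m+s$'' should read ``for $i<m$ and $i>m+s+1$''.

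The range is sharp. Take the cone $X\subset\mathbb{P}^3$ over a smooth plane cubic $E$, so $m=2$ and $s=0$. Then $|X(\mathbb{F}_{q^e})|=1+q^e|E(\mathbb{F}_{q^e})|$, and the error term is $-a_e q^e$ with $|a_e|\ge q^{e/2}$ for infinitely many $e$. Correspondingly, $H^1(E)(-1)$ appears as primitive cohomology in $H^3(X)$. So your remark that the weights alone give the exponent $(m+s)/2$ is wrong: the extra $q^{1/2}$ in the statement is exactly what is needed, not slack.

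Your duality bullet misses the local cohomology $H^j_{\Sing X}(X,\mathbb{Q}_\ell)$. For a complete intersection it vanishes only for $j<m-s$; at the cone vertex, $H^2_v(X)\neq 0$. Hence $H^{2m-i}(X)\to H^{2m-i}(X\setminus \Sing X)$ is an isomorphism only for $2m-i<m-s-1$, and the primitive part of $H^i(X)$ vanishes only for $i>m+s+1$.

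Because you explicitly allowed the top degree $m+s+1$ in Step 3 and rely on Hooley--Katz for the vanishing, your final bound is still correct.
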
  
  
In the smooth case $s=-1$, this result is a direct consequence of Deligne's theorem \cite{De81} on the Weil conjectures.  
The general singular case can also be derived from Deligne's theorem and properties of $\ell$-adic cohomology, as noted by Hooley and Katz \cite{Hoo91}. If $s \geq m-1$, the estimate is trivial. If $s\leq m-2$ (the interesting case), $X$ is non-singular in codimension $1$, and hence a normal variety.  Since $X$ is a positive dimensional complete intersection, $X$ is geometrically connected. It follows that the variety $X$ is geometrically irreducible. The estimate is strongest when $s=-1$, giving the optimal square root estimate for 
the error term. The estimate becomes increasingly weaker when $s$ grows, i.e., when $X$ becomes more singular. 
  
The above constant $C$ can be taken to be the total degree of the zeta function of $X$, or the larger total $\ell$-adic Betti number of $X$. They can be explicitly bounded in terms of $n$ and the multi-degrees of the defining equations for $X$. We do not give the projective bound here. Instead, we will provide and use the simpler affine bound later. 
  
Now, we are ready to estimate the number $N_h(q)$ of $\mathbb{F}_q$-rational points on the affine complete intersection $X_{k,h, \mathbf{u}}$. 
 
\begin{Prop}\label{estimate1} 
Let $2\leq k < h < q$ with $q$ being a prime. Then, 
\[
|N_h(q) - q^{h-k+1}| \leq \frac{1}{2} (2k)^h q^{\frac{h-k+2}{2}}.
\]
\end{Prop}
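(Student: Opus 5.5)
We write $N_h(q)=|\overline{X}_{k,h,\mathbf{u}}(\mathbb{F}_q)|-|\overline{X}_{k,h}(\mathbb{F}_q)|$ and estimate the two projective counts separately using Proposition \ref{Del}. By Proposition \ref{comp}, $\overline{X}_{k,h,\mathbf{u}}$ is a complete intersection of dimension $m=h-k+1\geq 2$ in $\mathbb{P}^h$. By Proposition \ref{smooth}, its singular locus has dimension $s\leq 0$. Proposition \ref{Del} (with $e=1$) therefore gives
\[
\Bigl||\overline{X}_{k,h,\mathbf{u}}(\mathbb{F}_q)|-(q^{h-k+1}+q^{h-k}+\cdots+1)\Bigr|\leq C_1 q^{\frac{h-k+2}{2}}.
\]
Likewise, $\overline{X}_{k,h}$ is a smooth complete intersection of dimension $h-k$ in $\mathbb{P}^{h-1}$, so $s=-1$. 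When $h-k\geq 1$ this gives
\[
\Bigl||\overline{X}_{k,h}(\mathbb{F}_q)|-(q^{h-k}+\cdots+1)\Bigr|\leq C_2 q^{\frac{h-k}{2}}.
\]
Subtracting, the main terms telescope to exactly $q^{h-k+1}$. The total error is $C_1q^{(h-k+2)/2}+C_2q^{(h-k)/2}$, which is at most $(C_1+C_2)q^{(h-k+2)/2}$. The case $h-k=0$ is excluded by the hypothesis $k<h$.

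The problem is that the constants $C_1,C_2$ in Proposition \ref{Del} are not explicit. To make them explicit, I would redo the argument cohomologically, working directly with the affine variety $X=X_{k,h,\mathbf{u}}$. By the Grothendieck--Lefschetz trace formula,
\[
N_h(q)=\sum_i(-1)^i\operatorname{Tr}\bigl(F\mid H^i_c(X_{\overline{\mathbb{F}}_q},\mathbb{Q}_\ell)\bigr).
\]
From the open–closed decomposition and the structure of the two projective complete intersections, $H^{2(h-k+1)}_c$ is one-dimensional with Frobenius eigenvalue $q^{h-k+1}$ (geometric irreducibility). The lower-degree cohomology coming from the hyperplane classes cancels between $\overline{X}_{k,h,\mathbf{u}}$ and $\overline{X}_{k,h}$. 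Because the singularities are isolated and the hyperplane section at infinity is smooth, the remaining cohomology of $X$ is concentrated in degrees $\leq h-k+2$. By Deligne's theorem, its Frobenius weights are at most $h-k+2$. So
\[
|N_h(q)-q^{h-k+1}|\leq B\,q^{\frac{h-k+2}{2}},
\]
where $B$ is the sum of the remaining compactly supported Betti numbers of $X$, which is at most the total compactly supported Betti number of $X$.

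The final step is to bound that total Betti number. The plan is to use the explicit affine bound of \cite{WZ26} (the Katz/Bombieri-type bound) for an affine variety in $\mathbb{A}^h$ cut out by $k-1$ polynomials of degree at most $k-1$. This has the shape of a total Betti number at most $\frac12(1+\text{(max degree)})^{h}$ up to normalization, possibly $\frac{1}{2}(2d+2)^h$-type. With degree $d\leq k-1$ this gives at most $\frac{1}{2}(2k)^h$, matching the claimed constant.

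The main obstacle is this step: making the Betti number bound explicit with exactly the constant $\frac12(2k)^h$, and justifying that the weight bound $h-k+2$ applies to every non-top cohomology group. For the latter I would use the isolated singularities together with the smooth section at infinity, arguing via the long exact sequence for $X\subset\overline{X}_{k,h,\mathbf{u}}$ and the Lefschetz-type vanishing for complete intersections with $s\leq 0$, as in Hooley--Katz.
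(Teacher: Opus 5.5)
Your proposal is correct, but it reaches the explicit constant by a different and heavier route than the paper.

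\textbf{The paper's route.} The paper never determines the individual groups $H^i_c(X_{k,h,\mathbf{u}})$. It applies Proposition \ref{Del} for \emph{all} extension degrees $e$, using that $C_1,C_2$ are independent of $e$. The difference of the two projective estimates then gives $\bigl||X_{k,h,\mathbf{u}}(\mathbb{F}_{q^e})|-q^{e(h-k+1)}\bigr|\le (C_1+C_2)q^{e(h-k+2)/2}$ uniformly in $e$. Write $Z(X_{k,h,\mathbf{u}},T)=(1-q^{h-k+1}T)^{-1}\prod_i(1-\alpha_iT)/\prod_j(1-\beta_jT)$ in reduced form. The uniform bound makes $\sum_j(1-\beta_jT)^{-1}-\sum_i(1-\alpha_iT)^{-1}$ analytic on $|T|<q^{-(h-k+2)/2}$, which forces $|\alpha_i|,|\beta_j|\le q^{(h-k+2)/2}$. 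So the unknown constant can be replaced by $d_1+d_2<B_c(X_{k,h,\mathbf{u}})$. Proposition \ref{betti} with $n=h$, $r=d=k-1$ then gives $B_c(X_{k,h,\mathbf{u}})\le\binom{h-1}{k-2}k^h\le 2^{h-1}k^h=\frac12(2k)^h$. This is the precise form of the bound you were guessing at. It applies because $X_{k,h,\mathbf{u}}$ is an affine complete intersection of codimension $k-1$, by Proposition \ref{comp}.

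The obstacle you flag therefore disappears once $e$ is allowed to vary. It was using Proposition \ref{Del} only at $e=1$ that left the constants unusable.

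\textbf{Your route.} Your cohomological alternative is also sound, but it needs an extra Hooley--Katz input: a projective complete intersection of dimension $m$ with $\dim\Sing=s$ has the $\ell$-adic cohomology of $\mathbb{P}^m$ in degrees $>m+s+1$.
\begin{itemize}
\item Here $m=h-k+1\ge 2$, $s\le 0$, and $\overline{X}_{k,h}$ is smooth.
\item The long exact sequence for $X_{k,h,\mathbf{u}}\subset\overline{X}_{k,h,\mathbf{u}}$ then gives $H^i_c(X_{k,h,\mathbf{u}})=0$ for $m+2\le i\le 2m-1$, and $H^{2m}_c\cong\mathbb{Q}_\ell(-m)$.
\item In the borderline degree $m+2$, use that $H^{m+1}(\overline{X}_{k,h,\mathbf{u}})\to H^{m+1}(\overline{X}_{k,h})$ is onto via powers of the hyperplane class.
\item Deligne's bound (weights of $H^i_c$ are $\le i$) then yields the estimate with constant $B_c-1$.
\end{itemize}
The low-degree cancellation you mention is true but not needed, since those groups have weights $\le i\le h-k+2$ anyway.

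\textbf{Comparison.} Your route gives finer structural information about the cohomology, and it makes your first paragraph redundant. The paper's argument instead uses Proposition \ref{Del} purely as a black box, together with the rationality of the zeta function, and never has to know which cohomological degrees survive.
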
 
 
\begin{proof}
We give a self-contained proof using Proposition \ref{Del} as a black box and the total Betti number estimate in \cite[Theorem 1.2.1]{WZ26}.  
 
The variety $\overline{X}_{k,h}$ is a smooth projective complete intersection of dimension $h-k>0$ in $\mathbb{P}^{h-1}$. Proposition \ref{Del} with $s=-1$ yields the following estimate for all $e\geq 1$: 
\[
\left| |\overline{X}_{k,h}(\mathbb{F}_{q^e})| - \frac{q^{e(h-k+1)}-1}{q^e-1}\right| \leq C_1q^{\frac{e(h-k)}{2}},
\]
where the constant $C_1$ is independent of the extension degree $e$.   
  
The variety $\overline{X}_{k,h, \mathbf{u}}$ is a projective complete intersection of dimension $h-k+1 \geq 2$ in $\mathbb{P}^h$ with at most isolated singularities. Proposition \ref{Del} with $s=0$ yields the following estimate  for all $e\geq 1$: 
\[
\left| |\overline{X}_{k,h, \mathbf{u}}(\mathbb{F}_{q^e})| - \frac{q^{e(h-k+2)}-1}{q^e-1}\right| \leq C_2q^{\frac{e(h-k+2)}{2}},
\]
where the constant $C_2$ is independent of the extension degree $e$ .

Since $|X_{k,h, \mathbf{u}}(\mathbb{F}_{q^e})| = |\overline{X}_{k,h, \mathbf{u}}(\mathbb{F}_{q^e})| - |\overline{X}_{k,h}(\mathbb{F}_{q^e}) |$, we conclude that for any positive integer $e$, 
\begin{equation}\label{bound}
\left| |X_{k,h, \mathbf{u}}(\mathbb{F}_{q^e})|- q^{e(h-k+1)}\right| \leq (C_1+C_2)q^{\frac{e(h-k+2)}{2}}.
\end{equation}
Write the rational zeta function of $X_{k,h, \mathbf{u}}$ over $\mathbb{F}_q$ in reduced form:
\[
Z(X_{k,h, \mathbf{u}}, T) := \exp\left(\sum_{e=1}^{\infty} \frac{|X_{k,h, \mathbf{u}}(\mathbb{F}_{q^e})|}{e}T^e\right) = \frac{1}{(1-q^{h-k+1}T)}\frac{\prod_{i=1}^{d_1}(1-\alpha_iT)}{\prod_{j=1}^{d_2}(1-\beta_jT)}.
\]
Equivalently, for every positive integer $e$, we have 
\[
|X_{k,h, \mathbf{u}}(\mathbb{F}_{q^e})|- q^{e(h-k+1)} = \sum_{j=1}^{d_2} \beta_j^e - \sum_{i=1}^{d_1} \alpha_i^e.
\] 
The bound in \eqref{bound} for all $e$ implies that the following rational function 
\[
\sum_{e=0}^{\infty} \left(\sum_{j=1}^{d_2} \beta_j^e - \sum_{i=1}^{d_1} \alpha_i^e\right)T^e = \sum_{j=1}^{d_2}\frac{1}{1-\beta_jT} - \sum_{i=1}^{d_1}\frac{1}{1-\alpha_iT}
\] 
is analytic (possesses no poles) in the open disk $|T| < q^{-\frac{h-k+2}{2}}$. It follows that 
\[
|\alpha_i| \leq q^{\frac{h-k+2}{2}}, \quad |\beta_j| \leq q^{\frac{h-k+2}{2}}, \quad 1\leq i\leq d_1, \quad 1\leq j \leq d_2.
\]
This implies that for all positive integers $e$, 
\[
\left| |X_{k,h, \mathbf{u}}(\mathbb{F}_{q^e})|- q^{e(h-k+1)} \right| \leq \sum_{j=1}^{d_2} |\beta_j|^e + \sum_{i=1}^{d_1} |\alpha_i|^e \leq (d_1+d_2)q^{\frac{e(h-k+2)}{2}},
\] 
where $d_1+d_2$ is bounded by the total degree $d_1+d_2+1$ of $Z(X_{k,h, \mathbf{u}}, T)$. 

Now, the affine variety $X_{k,h,\mathbf{u}}$ in $\mathbb{A}^h$ is defined by the vanishing of $k-1$ equations with degrees at most $k-1$. Bombieri's total degree bound \cite{Bo78} for an affine variety gives 
\[
d_1 + d_2 +1 \leq (4(k-1)+9)^{h+k-1} = (4k+5)^{h+k-1}.
\]

This total degree bound can be improved via cohomological methods. For a prime $\ell \neq q$, let $\mathbb{Q}_{\ell}$ denote the field of $\ell$-adic rational numbers. The $\ell$-adic cohomological trace formula states that  
\[
Z(X_{k, h, \mathbf{u}}, T) = \prod_{i=0}^{2(h-k+1)} \det \left(I - \sigma_q T \mid H_c^i( X_{k, h, \mathbf{u}} \otimes \bar{\mathbb{F}}_q, \mathbb{Q}_{\ell})\right)^{(-1)^{i-1}},
\]
where $H_c^i( X_{k, h, \mathbf{u}} \otimes \bar{\mathbb{F}}_q, \mathbb{Q}_{\ell})$ denotes the $\ell$-adic cohomology with compact support, which is a finite-dimensional vector space over $\mathbb{Q}_{\ell}$, and $\sigma_q$ is the geometric Frobenius which induces an invertible linear map on this space. From the trace formula, one deduces the inequality 
\[
d_1 + d_2 + 1 \leq B_c(X_{k, h, \mathbf{u}}) := \sum_{i=0}^{2(h-k+1)} \dim_{\mathbb{Q}_{\ell}} H_c^i( X_{k, h, \mathbf{u}} \otimes \bar{\mathbb{F}}_q, \mathbb{Q}_{\ell}).
\]
The quantity $B_c(X_{k, h, \mathbf{u}})$ is the total Betti number, whose independence on the choice of the auxiliary prime $\ell$ remains a major open problem in arithmetic geometry. 

In any case, a sharp upper bound for the total Betti number yields a sharp upper bound for the total degree. For instance, Katz's estimate \cite[p. 43]{Kat01} for the total $\ell$-adic Betti number gives 
\[
d_1+d_2+1 \leq B_c(X_{k, h, \mathbf{u}}) \leq 3(2+k-1)^{h+k-1} = 3(k+1)^{h+k-1},
\]
which is better than Bombieri's bound. Further 
improved total Betti number bounds are studied systematically in \cite{WZ26}. Since $X_{k,h,\mathbf{u}}$ is an affine complete intersection in $\mathbb{A}^h$, we can apply Proposition \ref{betti} below (which is \cite[Theorem 1.2.2]{WZ26}) to get an asymptotically optimal total $\ell$-adic Betti number bound. This yields:
\[
d_1+d_2+1 \leq B_c(X_{k, h, \mathbf{u}}) \leq \binom{h-1}{k-2} k^h < 2^{h-1}k^h = \frac{1}{2}(2k)^h.
\] 
The proof is complete.
\end{proof}
 
\begin{Prop}[\cite{WZ26}] \label{betti}
Let $X$ be an affine complete intersection of codimension $r$ in $\mathbb{A}^n$ defined by $r\leq n$ polynomials of degrees at most $d$. Then, the total $\ell$-adic Betti number $B_c(X)$ of $X$ (and consequently, the total degree of the zeta function of $X$ if $X$ is defined over $\mathbb{F}_q$) is bounded by 
\[
B_c(X) \leq \binom{n-1}{r-1}(d+1)^n.
\] 
\end{Prop}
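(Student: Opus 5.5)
The plan is to follow Katz's strategy for bounding sums of Betti numbers: first reduce to a \emph{generic} (hence smooth and transversal at infinity) affine complete intersection, then compute its cohomology exactly. The degree bump from $d$ to $d+1$ in the statement is where the singular-to-generic specialization cost goes.

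\emph{Step 1: reduction to a generic member.} Let $X=\{f_1=\dots=f_r=0\}\subset\mathbb{A}^n$ be of codimension $r$ with $\deg f_i\le d$. I would put $X$ in a one-parameter family
\[
X_t=\{f_i+t\,g_i=0,\ 1\le i\le r\},
\]
where the $g_i$ are generic polynomials of degree $d+1$ whose leading forms are chosen to make the projective closure of $X_t$ (for generic $t$) smooth and transverse to $H_\infty$. The point of raising the degree is that the generic fibre then has leading forms independent of the $f_i$. For the special fibre, I would use the nearby/vanishing cycles formalism (as in Katz's \cite{Kat01} argument, or a Bertini-type induction on hyperplane sections) to get
\[
B_c(X_0)\le B_c(X_{\bar\eta}).
\]
This semicontinuity step is where I expect the real difficulty. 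The total Betti number is not semicontinuous in general, so one must exploit the complete-intersection hypothesis, which makes the relevant $R\Psi$ sheaves perverse up to shift; this controls the cohomology of the special fibre by that of the nearby fibre. The first thing to try is Katz's trick of bounding compactly supported cohomology of the special fibre by the perverse cohomology of the total space of the family over $\mathbb{A}^1$ and then restricting.

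\emph{Step 2: cohomology of the generic fibre.} For the generic smooth affine complete intersection $Y$ of dimension $n-r$, with $\overline{Y}$ and $\overline{Y}\cap H_\infty$ both smooth complete intersections, the weak Lefschetz theorem and the excision sequence show that $H_c^i(Y)$ vanishes except in degree $i=n-r$ and the top degree $2(n-r)$ (one-dimensional). Hence
\[
B_c(Y)=|\chi_c(Y)|+O(1),
\]
and more precisely $B_c(Y)$ is $|\chi_c(\overline{Y})-\chi_c(\overline{Y}\cap H_\infty)|$ up to the top class.

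\emph{Step 3: computing the Euler characteristic.} The Euler characteristics of smooth complete intersections are given by the standard Chern class generating function. For multidegree $(D,\dots,D)$ with $D=d+1$,
\[
\chi(\overline{Y})=\text{coefficient of }H^{n-r}\text{ in }\frac{(1+H)^{n+1}D^r}{(1+DH)^r}.
\]
The same formula in one lower dimension gives $\chi(\overline{Y}\cap H_\infty)$. I would take the difference and bound it termwise by absolute values. The sum collapses to something like
\[
\sum_j\binom{n-1-j}{r-1}\binom{?}{?}D^{\,r+j},
\]
which is majorized by $\binom{n-1}{r-1}D^n$, using $\sum_j\binom{\cdot}{\cdot}\le\binom{n-1}{r-1}$ together with $D^{r+j}\le D^n$. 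This reproduces the stated bound $\binom{n-1}{r-1}(d+1)^n$; in the hypersurface case $r=1$ it recovers the familiar bound of order $(d+1)^n$. Combining Steps 1 to 3 then gives the proposition.
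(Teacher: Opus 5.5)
The paper gives no proof of this statement; it quotes it from \cite{WZ26} and uses it as a black box. Your proposal therefore has to stand on its own, and it does not. All of the content sits in Step~1, which you assert without proof, and the mechanism you indicate does not suffice to prove it.

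Perversity of $R\Psi\Lambda[\dim X_0]$ holds for every flat family of complete intersections over a curve, yet total Betti numbers do jump up under specialization in such families. For example, $\{xy=t\}\subset\mathbb{A}^2$ is $\mathbb{G}_m$ with $B_c=2$ for $t\neq 0$, but $B_c=3$ at $t=0$. Likewise, the line triangle $xy(x+y-1)=0$ has $B_c=6$, whereas a generic affine plane cubic has $B_c=5$. The triangle $\Lambda_{X_0}\to R\Psi\Lambda\to R\Phi\Lambda\xrightarrow{+1}$ only gives $\dim H^i_c(X_0)\le\dim H^i_c(X_0,R\Psi\Lambda)+\dim H^{i-1}_c(X_0,R\Phi\Lambda)$, i.e.\ vanishing cycles can add cohomology to the special fibre.

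Moreover, your family is not proper. Since the leading forms jump from $t\,g_{i,d+1}$ to those of the $f_i$ at $t=0$, there are vanishing cycles at infinity; already for $n=r=1$, one of the $d+1$ roots of $f+tg$ escapes to infinity. So $R\Gamma_c(X_0,R\Psi\Lambda)$ need not equal $R\Gamma_c(X_{\bar\eta})$. The passage from $d$ to $d+1$ must be used quantitatively to absorb exactly these contributions, and you give no argument for that. As written, Step~1 restates the theorem rather than reducing it.

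Step~3 is also wrong as described. You have $\chi_c(Y)=\chi(\overline{Y})-\chi(\overline{Y}\cap H_\infty)=D^r\cdot\bigl(\text{coefficient of }H^{n-r}\text{ in }(1+H)^n(1+DH)^{-r}\bigr)$, and bounding this alternating expansion termwise destroys the cancellation. For $r=1$ it gives $(D+1)^n-1=(d+2)^n-1$, and after using $D^{r+j}\le D^n$ even $(2^n-1)D^n$, not $(d+1)^n$. What works is the substitution $1+H=D^{-1}\bigl((1+DH)+(D-1)\bigr)$, which gives the sign-coherent formula
\[
\chi_c(Y)=1+(-1)^{n-r}\sum_{i=0}^{r-1}\binom{n}{i}\binom{n-i-1}{r-1-i}(D-1)^{n-i}.
\]
Then $\binom{n}{i}\binom{n-i-1}{r-1-i}=\frac{n}{n-i}\binom{n-1}{r-1}\binom{r-1}{i}\le\binom{n-1}{r-1}\binom{n}{i}$ yields $B_c(Y)\le\binom{n-1}{r-1}(d+1)^n$ for $D=d+1$. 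So Steps~2 and~3 can be repaired, and they confirm that the stated bound dominates the total Betti number of a generic complete intersection of degree $d+1$. Comparing an arbitrary singular $X$ with that generic member is the actual theorem, and that comparison is missing.
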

  
 As a consequence of Proposition \ref{estimate1}, we give a set of conditions which guarantees that $N_h(q)$ is subexponential in $q$ for all sufficiently large 
 $q$, that is, for all $q$ larger than an effective constant. Throughout the paper, all sufficiently large bounds are effective. 
 These conditions are what we need in constructing locally dense lattices. 
 \begin{Thm}\label{weakloc} 
Let $\epsilon>0$. Choose a positive constant $\epsilon_1$ and positive integers $k, h$ such that 
\[ 0 < \epsilon_1 < \frac{\epsilon}{2(1+\epsilon)} < \frac{1}{2}, \quad 2k = 2\lfloor \frac{q^{\epsilon_1}}{2}\rfloor, \quad h = \lfloor (1+\epsilon)k\rfloor. \]
Then, for all sufficiently large prime $q$, we have 
\[ N_h(q) \geq \frac{1}{2} q^{\epsilon k}. \]
\end{Thm}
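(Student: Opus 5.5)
The plan is to apply Proposition \ref{estimate1} directly and check that the error term is negligible against the main term. First verify its hypotheses: for large $q$ we have $k = \lfloor q^{\epsilon_1}/2\rfloor \geq 2$ and $h = \lfloor (1+\epsilon)k\rfloor > k$. Indeed $\epsilon k \geq 2\epsilon > 1$ is not guaranteed for small $\epsilon$, but $k\to\infty$ with $q$, so eventually $\epsilon k \ge 1$ and $h \geq k+1$. Also $h \leq 2k \leq q^{\epsilon_1} < q$. Proposition \ref{estimate1} then gives
\[
N_h(q) \geq q^{h-k+1} - \tfrac12 (2k)^h q^{\frac{h-k+2}{2}}.
\]
Since $h \geq (1+\epsilon)k - 1$, the main term satisfies $q^{h-k+1} \geq q^{\epsilon k}$. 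It therefore suffices to show that the error term is at most $\tfrac12 q^{h-k+1}$, i.e.
\[
(2k)^h \leq q^{\frac{h-k}{2}}.
\]

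To compare the two sides, bound $(2k)^h \leq (q^{\epsilon_1})^h = q^{\epsilon_1 h}$, using $2k \leq q^{\epsilon_1}$. The desired inequality then follows from $\epsilon_1 h \leq (h-k)/2$, i.e. $h(1-2\epsilon_1) \geq k$. Since $h \geq (1+\epsilon)k - 1$, it suffices to have
\[
((1+\epsilon)k - 1)(1-2\epsilon_1) \geq k.
\]
The hypothesis $\epsilon_1 < \frac{\epsilon}{2(1+\epsilon)}$ is equivalent to $(1+\epsilon)(1-2\epsilon_1) > 1$. Write $(1+\epsilon)(1-2\epsilon_1) = 1+\eta$ with $\eta>0$ a constant. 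The inequality becomes $\eta k \geq 1-2\epsilon_1$, which holds once $k \geq 1/\eta$, i.e. for all sufficiently large $q$, effectively.

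Combining, $N_h(q) \geq q^{h-k+1} - \tfrac12 q^{h-k+1} = \tfrac12 q^{h-k+1} \geq \tfrac12 q^{\epsilon k}$.

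The only delicate point is the exponent bookkeeping. The Betti-number constant $(2k)^h$ is superpolynomial in $q$ and must be absorbed by the gap of $\tfrac{h-k}{2}$ between the main-term exponent and the Deligne error exponent. This is exactly where the strict inequality defining $\epsilon_1$ is used, and the floor-function losses are handled by taking $q$ large.
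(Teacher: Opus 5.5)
Your proposal is correct and follows the paper's proof. Like the paper, you apply Proposition \ref{estimate1}, bound $(2k)^h \le q^{\epsilon_1 h}$, and use $\epsilon_1 < \frac{\epsilon}{2(1+\epsilon)}$ to absorb the error term into half of the main term $q^{h-k+1} \ge q^{\epsilon k}$. One small slip: $h \le 2k$ requires $\epsilon \le 1$, which the statement does not assume, but $h \le (1+\epsilon)k < q$ for large $q$ is all you need for the hypothesis $h < q$.
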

 
 \begin{proof}  
Clearly, for large prime $q$, we have 
\[ 2 \leq   k <  h  = \lfloor (1+\epsilon)k\rfloor  \leq (1+\epsilon)q^{\epsilon_1} < q.\]
By Propositions \ref{estimate1}, for all large prime $q$, we deduce
\begin{align}
N_h(q) &\geq q^{h-k+1} - \frac{1}{2}(2k)^{h} q^{\frac{h-k+2}{2}}  \nonumber \\
&\geq \frac{1}{2} q^{h-k+1} +  \frac{1}{2}(q^{h-k+1} - (2k)^{h} q^{\frac{h-k+2}{2}}) \nonumber \\
&\geq \frac{1}{2} q^{\epsilon k} +  \frac{1}{2}(q^{\epsilon k } - q^{\epsilon_1(1+\epsilon)k +\frac{\epsilon k+2}{2}}). \nonumber                  
\end{align}
Now, for large prime $q$,  
\[\epsilon k  - (\epsilon_1(1+\epsilon)k +\frac{\epsilon k+2}{2}) = (\frac{\epsilon}{2} - \epsilon_1(1+\epsilon))k - 1 > 0.
 \]
The theorem is proved. 
\end{proof}

\begin{Rmk} When $\epsilon>1$, the dimension $h-k+1 = \lfloor \epsilon k +1\rfloor$ is large compared to $k$ (the number of equations), and non-trivial estimate for $N_h(q)$ 
could be proved using the Weil bound or the Lang-Weil bound. In our applications, we need $\epsilon<1$ and in fact we want $\epsilon$ going to zero. 
In this case, the 
dimension $h-k+1 = \lfloor \epsilon k +1\rfloor$ is very small compared to $k$ (the number of equations), and the estimate in Theorem \ref{weakloc} is much deeper. 
One could ask if there is an elementary 
proof of Theorem \ref{weakloc} bypassing Deligne's theorem for $0<\epsilon < 1$.   

\end{Rmk} 
   
 \section{Inclusion-Exclusion Sieving}
 
To handle the number $N_h^*(q)$ of solutions with distinct coordinates, namely, the number of $\mathbb{F}_q$-rational points on $X_{k,h, \mathbf{u}}^*$, we need to remove those $\mathbb{F}_q$-rational points on $X_{k,h, \mathbf{u}}$ with two coordinates being the same. For this purpose, we introduce the following hyperplane section. 

\begin{Def} 
Let 
\[ Y_{k,h, \mathbf{u}} = X_{k,h, \mathbf{u}} \cap \{x_1 = x_2\}. \]
\end{Def}

Since the defining equations for $X_{k,h, \mathbf{u}}$ are symmetric polynomials in $\{x_1, \dots, x_h\}$, it follows that for any $1\leq i < j \leq h$, we have 
\[ |Y_{k,h, \mathbf{u}}(\mathbb{F}_q)| = |X_{k,h, \mathbf{u}} \cap \{x_1 = x_2\}(\mathbb{F}_q)| = |X_{k,h, \mathbf{u}} \cap \{x_i = x_j\}(\mathbb{F}_q)|. \]
Clearly, 
\[ X_{k,h, \mathbf{u}}^*(\mathbb{F}_q) = X_{k,h, \mathbf{u}}(\mathbb{F}_q) \setminus \bigcup_{1\leq i < j \leq h} \left(X_{k,h, \mathbf{u}} \cap \{x_i = x_j\}(\mathbb{F}_q)\right). \] 
Applying the inclusion-exclusion sieving as done in \cite{CW04}, we obtain the inequality 
\begin{equation}\label{eq:inclusion_exclusion}
N_h^*(q) = |X_{k,h, \mathbf{u}}^*(\mathbb{F}_q)| \geq N_h(q) - \binom{h}{2} |Y_{k,h, \mathbf{u}}(\mathbb{F}_q)|.
\end{equation}
This is a crude sieving, but sufficient for our current purpose. Presumably, the full distinct coordinate sieving developed in \cite{LW10} could be applied to yield a sharp asymptotic formula for $N_h^*(q)$. We do not pursue this refinement here. 

The quantity $N_h(q)$ is already estimated in Proposition \ref{estimate1}. The quantity $|Y_{k,h, \mathbf{u}}(\mathbb{F}_q)|$ can be estimated in a similar manner. 

\begin{Prop}\label{estimate2} 
Let $2\leq k\leq h-2 < q-2$ with $q$ being a prime. Then, 
\[
\left| |Y_{k,h, \mathbf{u}}(\mathbb{F}_q)| - q^{h-k}\right| \leq \frac{1}{2}(2k)^{h-1}q^{\frac{h-k+2}{2}}.
\]
\end{Prop}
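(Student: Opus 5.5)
Substituting $x_1=x_2=t$ identifies $Y_{k,h,\mathbf{u}}$ with the affine variety in $\mathbb{A}^{h-1}$ (coordinates $t,x_3,\dots,x_h$) defined by
\[
2t^j+\sum_{i=3}^{h} x_i^j = h_j, \quad 1\le j\le k-1 .
\]
I will repeat the argument of Proposition \ref{estimate1} for this weighted power-sum system in $h-1$ variables, with $k-1$ equations of degree at most $k-1$. The expected main term is $q^{(h-1)-(k-1)}=q^{h-k}$.

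The first step is to take the projective closure $\overline{Y}\subset\mathbb{P}^{h-1}$, with equations $2t^j+\sum_{i\ge 3} x_i^j=h_j x_{h+1}^j$. Its section at infinity is $\overline{Y}_\infty\subset\mathbb{P}^{h-2}$, defined by $2t^j+\sum x_i^j=0$. I will show both are complete intersections, of dimensions $h-k$ and $h-k-1$, by the downward induction of Proposition \ref{comp}. With $h$ equations instead of $k-1$, the resulting system is the power-sum system for the multiset $\{t,t,x_3,\dots,x_h\}$ of size $h<q$, so Newton's identities again leave finitely many points. Here $h-1$ equations in $\mathbb{P}^{h-2}$, respectively in $\mathbb{P}^{h-1}$, already suffice; one only needs $j\le h-1$ and the power sums up to $h$, which is harmless.

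The second step is the singular locus. I would show that $\overline{Y}_\infty$ is smooth using the Jacobian argument of Proposition \ref{smooth}. The Jacobian columns are $(j t^{j-1})_j$ weighted by $2$ and $(jx_i^{j-1})_j$. A rank deficiency forces at most $k-2$ distinct values among $t,x_3,\dots,x_h$. The Vandermonde argument then gives $m_i z_i=0$, where the multiplicities $m_i$ now count $t$ twice. These satisfy $1\le m_i\le h<q$, so the same contradiction arises. Lemma \ref{singular} then shows that $\overline{Y}$ has at most isolated singularities.

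The third step applies Proposition \ref{Del} with $s=-1$ and $s=0$ over every $\mathbb{F}_{q^e}$ and subtracts. This gives $\bigl||Y(\mathbb{F}_{q^e})|-q^{e(h-k)}\bigr|\le C q^{e(h-k+1)/2}$, and $h-k\ge 2$ guarantees positive dimensions. The zeta-function argument of Proposition \ref{estimate1} then bounds all Frobenius eigenvalues other than $q^{h-k}$ by $q^{(h-k+1)/2}$. The constant becomes the total degree, which Proposition \ref{betti} bounds with $n=h-1$, $r=k-1$, $d=k-1$ by
\[
\binom{h-2}{k-2}k^{h-1}<2^{h-2}k^{h-1}\le \tfrac12(2k)^{h-1}.
\]
Since $q^{(h-k+1)/2}\le q^{(h-k+2)/2}$, this gives the stated bound, with room to spare.

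I expect the main obstacle to be the singularity analysis at infinity. The coefficient $2$ on $t$ breaks the symmetry of the original system, so I must check that the Jacobian minors involving the $t$-column still reduce to Vandermonde determinants, up to the unit $2$, when $q$ is odd. The multiplicity argument has to go through with $m(t)$ counted as $2$, which needs $h<q$. The case $q=2$ is excluded by the hypothesis $h<q$ together with $k\ge2$.
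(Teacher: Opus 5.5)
Your proof is correct, but it gets the singularity bounds by a different route than the paper.

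The paper does no new Jacobian computation. It treats $\overline{Y}=\overline{X}_{k,h,\mathbf{u}}\cap\{x_1=x_2\}$ and $\overline{Y}_\infty=\overline{X}_{k,h}\cap\{x_1=x_2\}$ as hyperplane sections of the varieties in Proposition \ref{smooth}. It gets $\dim\overline{Y}=h-k$ by slicing once more with $\{x_2=0\}$, which lands in $\overline{X}_{k,h-2,\mathbf{u}}$ and is covered by Proposition \ref{comp}. It then applies the Zak direction of Lemma \ref{singular}, which gives only $\dim\Sing(\overline{Y})\le 1$ and $\dim\Sing(\overline{Y}_\infty)\le 0$. Proposition \ref{Del} with $s=1$ and $s=0$ then yields exactly the stated exponent $\frac{h-k+2}{2}$.

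You instead rerun the Jacobian--Vandermonde argument on the weighted system $2t^j+\sum_{i\ge 3}x_i^j$. The column factor $2$ is a unit because $q>h\ge k+2\ge 4$, and the multiplicities in $\{t,t,x_3,\dots,x_h\}$ lie in $[1,h]\subset[1,q-1]$. This shows $\overline{Y}_\infty$ is actually smooth. You then use the other direction of Lemma \ref{singular}, with the hyperplane at infinity, to get isolated singularities on $\overline{Y}$.

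The trade-off is this. The paper's route is cheaper, since it is pure bookkeeping from earlier results. Yours costs one extra computation but gives the sharper bound $\frac12(2k)^{h-1}q^{\frac{h-k+1}{2}}$. With it, the main term $q^{h-k}$ strictly dominates every other Frobenius eigenvalue as soon as $h-k\ge 2$. With the paper's exponent, the two coincide when $h-k=2$.

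One point of rigor in your complete-intersection step needs fixing.

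\begin{itemize}
\item Newton's identities do not finish the job here. Power sums $p_1,\dots,p_{h-1}$ of a multiset of size $h$ leave $e_h$ free.
\item For $\overline{Y}\subset\mathbb{P}^{h-1}$, finiteness of the $(h-1)$-equation system comes from $t$ being a double root of $f=g\pm e_h$. That is, $g'(t)=0$ with $\deg g'=h-1$ and $h\neq 0$ in $\mathbb{F}_q$.
\item For $\overline{Y}_\infty\subset\mathbb{P}^{h-2}$, the dimension count needs the $(h-1)$-equation system to be empty, not merely finite. It is empty: $f=T^h\pm e_h$ with a double root forces $t=0$, then $e_h=0$, so every coordinate vanishes.
\item As literally written, using $h$ equations together with ``finitely many points'' would only give $\dim\overline{Y}\le h-k+1$.
\end{itemize}

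You can skip this step entirely. Your rank computation already shows that $\overline{Y}_\infty=\overline{Y}\cap H_\infty$ is smooth of pure dimension $h-k-1$, and that alone forces $\dim\overline{Y}\le h-k$.
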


\begin{proof}
Note that   
\[ Y_{k,h, \mathbf{u}} = \left(\overline{X}_{k,h, \mathbf{u}} \cap \{x_1 = x_2\}\right) \setminus \left(\overline{X}_{k,h} \cap \{x_1 = x_2\}\right). \]
The projective variety $\overline{X}_{k,h, \mathbf{u}} \cap \{x_1 = x_2\}$ in $\mathbb{P}^{h-1} : = \mathbb{P}^h \cap \{x_1 = x_2\}$ is defined by $k-1$ homogeneous equations. It follows that   
\[ \dim\left(\overline{X}_{k,h, \mathbf{u}} \cap \{x_1 = x_2\}\right) \geq h-1-(k-1) = h-k. \]
By Proposition \ref{comp}, its hyperplane section 
\[ \overline{X}_{k,h, \mathbf{u}} \cap \{x_1 = x_2\} \cap \{ x_2=0\} \cong \overline{X}_{k,h-2, \mathbf{u}} \]
is a complete intersection of dimension $h-k-1$ in $\mathbb{P}^{h-2}$. It follows that   
\[ \dim\left(\overline{X}_{k,h, \mathbf{u}} \cap \{x_1 = x_2\}\right) \leq (h-k-1)+1 = h-k. \]
We thus deduce that   
\[ \dim\left(\overline{X}_{k,h, \mathbf{u}} \cap \{x_1 = x_2\}\right) = h-k. \]  
This proves that $\overline{X}_{k,h, \mathbf{u}} \cap \{x_1 = x_2\}$ is a projective complete intersection of dimension $h-k\geq 2$ in $\mathbb{P}^{h-1}$. By Proposition \ref{smooth}, the singular locus of $\overline{X}_{k,h, \mathbf{u}}$ has dimension bounded by $0$. By Lemma \ref{singular}, the singular locus of $\overline{X}_{k,h, \mathbf{u}} \cap \{x_1 = x_2\}$ has dimension bounded by $1$. Similarly, $\overline{X}_{k,h} \cap \{x_1 = x_2\}$ is a projective complete intersection of dimension $h-k-1\geq 1$ in $\mathbb{P}^{h-1}$ whose singular locus has dimension bounded by $0$ (isolated singularities).  
 
Proposition \ref{Del} with $s=1$ yields the estimate 
\[ \left| |(\overline{X}_{k,h, \mathbf{u}}\cap \{x_1 = x_2\})(\mathbb{F}_{q^e})| - \frac{q^{e(h-k+1)}-1}{q^e-1}\right| \leq C_3 q^{\frac{e(h-k+2)}{2}}, \]
where the constant $C_3$ is independent of the extension degree $e$. Similarly, Proposition \ref{Del} with $s=0$ yields the estimate  
\[ \left| |(\overline{X}_{k,h}\cap \{x_1 = x_2\})(\mathbb{F}_{q^e})| - \frac{q^{e(h-k)}-1}{q^e-1}\right| \leq C_4 q^{\frac{e(h-k)}{2}}, \]
where the constant $C_4$ is independent of the extension degree $e$. Taking the difference of these two asymptotic formulas gives the estimate 
\[ \left| |Y_{k, h, \mathbf{u}}(\mathbb{F}_{q^e})| - q^{e(h-k)}\right| \leq C_5 q^{\frac{e(h-k+2)}{2}}, \] 
where $C_5$ is the total degree of the zeta function of $Y_{k, h, \mathbf{u}}$. Since the affine variety $Y_{k, h, \mathbf{u}}$ in $\mathbb{A}^{h-1}$ is a complete intersection defined by $k-1$ equations of degrees at most $k-1$, the total Betti number estimate in Proposition \ref{betti} yields the estimate  
\[ C_5 \leq \binom{h-2}{k-2} k^{h-1} \leq 2^{h-2} k^{h-1} = \frac{1}{2}(2k)^{h-1}. \]
The proof is complete. 
\end{proof}
  
Now, we are ready to prove the following theorem on locally dense lattices in the weaker sense. 

\begin{Thm}\label{loc} 
Let $0 < \epsilon < 1$. Choose a positive constant $\epsilon_1$ and positive integers $k, h$ such that 
\[ 0 < \epsilon_1 < \frac{\epsilon}{2(1+\epsilon)} < \frac{1}{4}, \quad 2k = 2\lfloor \frac{q^{\epsilon_1}}{2}\rfloor, \quad h = \lfloor (1+\epsilon)k\rfloor \leq  q/2. \]
For $1 \leq p < \infty$ and $\alpha = \left(\frac{1+\epsilon}{2}\right)^{1/p}$, the Reed-Solomon lattice $\mathcal{L}_{q,k}$ in $\mathbb{R}^q$ satisfies the following two properties:
\begin{enumerate}
\item[\rm (1)] $\lambda^{(p)}(\mathcal{L}_{q,k}) \geq (2k)^{1/p}$, and
\item[\rm (2)] Let ${\mathbf{y}} = (1, \dots, 1, 0, \dots, 0)^T$ be the binary vector in $\mathbb{Z}^q$ with Hamming weight $h$. Let $S_2(\mathbf{y}, h)$ be the set of binary vectors in the coset $\mathbf{y} + \mathcal{L}_{q,k}$ with Hamming weight $h$. Then, for all sufficiently large prime $q$, we have 
\[ |S_2(\mathbf{y}, h)| \geq q^{\epsilon_1(1+\epsilon)k}. \]
\end{enumerate}
\end{Thm}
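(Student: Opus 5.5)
Part (1) is immediate from Lemma \ref{lem:BP-min-dist}: since $h=\lfloor(1+\epsilon)k\rfloor\le q/2$ and $k<h$, we have $k\le q/2$, so $\lambda^{(p)}(\mathcal{L}_{q,k})\ge (2k)^{1/p}$. The work is in part (2). There I will use the identity $|S_2(\mathbf{y},h)| = N_h^*(q)/h!$ and bound $N_h^*(q)$ from below via the sieve inequality \eqref{eq:inclusion_exclusion}:
\[ N_h^*(q)\ \ge\ N_h(q)-\binom{h}{2}|Y_{k,h,\mathbf{u}}(\mathbb{F}_q)|. \]

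\textbf{Checking hypotheses.} Since $k\sim q^{\epsilon_1}/2\to\infty$ and $h-k\ge \lfloor \epsilon k\rfloor\to\infty$, for large $q$ we have $2\le k\le h-2<q-2$. Thus both Proposition \ref{estimate1} and Proposition \ref{estimate2} apply. They give
\[ N_h(q)\ge q^{h-k+1}-\tfrac12(2k)^h q^{\frac{h-k+2}{2}}, \qquad |Y_{k,h,\mathbf{u}}(\mathbb{F}_q)|\le q^{h-k}+\tfrac12(2k)^{h-1}q^{\frac{h-k+2}{2}}. \]
Combining with $\binom{h}{2}\le h^2$, we get
\[ N_h^*(q)\ \ge\ q^{h-k+1}-h^2q^{h-k}-\big((2k)^h+h^2(2k)^{h-1}\big)q^{\frac{h-k+2}{2}}. \]

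\textbf{Comparing terms.} Write $D=h-k\ge \epsilon k-1$. The main term is $q^{D+1}$. The term $h^2q^{D}$ is at most $\tfrac14 q^{D+1}$, because $h^2\le 4q^{2\epsilon_1}\ll q$. For the error term, note that $2k\le q^{\epsilon_1}$ and $h\le (1+\epsilon)k$. Hence
\[ (2k)^h(1+h^2)\le q^{\epsilon_1(1+\epsilon)k+3\epsilon_1}, \]
so the error term is at most $q^{\epsilon_1(1+\epsilon)k+O(1)+\frac{D+2}{2}}$. This is $\le \tfrac14 q^{D+1}$ provided
\[ \tfrac{D}{2}\ \ge\ \epsilon_1(1+\epsilon)k+O(1). \]
Since $D/2\ge \epsilon k/2-1/2$ and $\epsilon/2-\epsilon_1(1+\epsilon)>0$ by hypothesis, the gap grows linearly in $k\to\infty$, so the inequality holds for large $q$. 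Altogether this gives $N_h^*(q)\ge \tfrac12 q^{D+1}\ge \tfrac12 q^{\epsilon k}$, using $D+1\ge\epsilon k$ since $h\ge (1+\epsilon)k-1$.

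\textbf{Dividing by $h!$ (the main obstacle).} We need
\[ \frac{q^{\epsilon k}}{2\,h!}\ \ge\ q^{\epsilon_1(1+\epsilon)k}. \]
Use $h!\le h^h$ and $h\le (1+\epsilon)q^{\epsilon_1}/2\le q^{\epsilon_1}$, so $h!\le q^{\epsilon_1 h}\le q^{\epsilon_1(1+\epsilon)k}$. It then suffices to have
\[ \epsilon k-2\epsilon_1(1+\epsilon)k\ \ge\ \log_q 2, \]
which follows from $\epsilon_1<\epsilon/(2(1+\epsilon))$ with a linear-in-$k$ margin. Only the precise $h!$ versus $q^{\epsilon_1 h}$ bookkeeping needs care. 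All implied thresholds on $q$ are effective, since they come only from these explicit comparisons.
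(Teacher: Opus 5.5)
Your proof is correct and is essentially the paper's argument: the same sieve inequality \eqref{eq:inclusion_exclusion} combined with Propositions \ref{estimate1} and \ref{estimate2}, followed by division by $h!\le q^{\epsilon_1 h}\le q^{\epsilon_1(1+\epsilon)k}$. The only difference is bookkeeping. You bound each error term by $\tfrac14 q^{h-k+1}$ to get $N_h^*(q)\ge \tfrac12 q^{\epsilon k}$, whereas the paper factors out $q^{(h-k+2)/2}$, uses $q-\binom{h}{2}>\binom{h}{2}$, and obtains $N_h^*(q)\ge q^{\lfloor \epsilon k\rfloor}$.
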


\begin{proof}  
Since $k \leq h \leq q/2$, part (1) holds by Lemma \ref{lem:BP-min-dist}. We can assume that $2 \leq k \leq h-2 < q-2$ with $q$ being a prime, so that the previous estimates on rational points can be applied. Combining Propositions \ref{estimate1} and \ref{estimate2} with the inequality \eqref{eq:inclusion_exclusion}, we obtain 
\begin{align}\label{ineq1}
N_h^*(q) &\geq q^{h-k+1} - \frac{1}{2}(2k)^{h} q^{\frac{h-k+2}{2}} - \binom{h}{2}\left(q^{h-k} + \frac{1}{2}(2k)^{h-1} q^{\frac{h-k+2}{2}}\right) \nonumber \\
&\geq q^{h-k}\left(q - \binom{h}{2}\right) - \binom{h}{2}(2k)^{h}q^{\frac{h-k+2}{2}} \\
&= q^{\frac{h-k+2}{2}} \left(q^{\frac{h-k-2}{2}}\left(q - \binom{h}{2}\right) - \binom{h}{2}(2k)^{h}\right). \nonumber                  
\end{align}
By our assumption, 
\[ h = \lfloor (1+\epsilon)k \rfloor  < 2k \leq q^{\epsilon_1} < q^{1/4}< \sqrt{q}, \quad h-k = \lfloor \epsilon k \rfloor. \]
Then, $q > h^2$ and $q - \binom{h}{2} > \binom{h}{2}$. It follows that 
\begin{equation}\label{ineq2}
N_h^*(q) > q^{\frac{\lfloor \epsilon k\rfloor+2}{2}}\binom{h}{2} \left(q^{\frac{\lfloor \epsilon k\rfloor -2}{2}} - (2k)^{\lfloor (1+\epsilon)k\rfloor}\right).
\end{equation}
By our assumption, 
\[ 2k = 2\lfloor \frac{q^{\epsilon_1}}{2}\rfloor, \quad 0 < \epsilon_1 < \frac{\epsilon}{2(1+\epsilon)} < \frac{1}{4}. \] 
Then, for sufficiently large $q$, one has 
\[ \frac{\lfloor \epsilon k\rfloor -2}{2} - \epsilon_1 \lfloor (1+\epsilon)k\rfloor \geq \left( \frac{\epsilon}{2} - \epsilon_1(1+\epsilon)\right)k - 2 \geq 2. \]
It follows that for large $q$, 
\begin{align*}
\frac{1}{2}q^{\frac{\lfloor \epsilon k\rfloor-2}{2}} - (2k)^{\lfloor (1+\epsilon)k\rfloor} &\geq q^{\epsilon_1 \lfloor (1+\epsilon)k\rfloor}\left(\frac{1}{2}q^{\frac{\lfloor \epsilon k\rfloor-2}{2} - \epsilon_1 \lfloor (1+\epsilon)k\rfloor} - 1\right) \\
&\geq q^{\epsilon_1 \lfloor (1+\epsilon)k\rfloor }\left(\frac{1}{2}q^2 - 1\right) > 0.
\end{align*}
By \eqref{ineq2}, for large $q$, we have  
\[ N_h^*(q) \geq q^{\frac{\lfloor \epsilon k \rfloor+2}{2}}\binom{h}{2}\frac{1}{2}q^{\frac{\lfloor \epsilon k \rfloor-2}{2}} \geq q^{\lfloor \epsilon k\rfloor}. \]
Note that 
\[ \epsilon - \epsilon_1(1+\epsilon) > 2\epsilon_1(1+\epsilon) - \epsilon_1(1+\epsilon) = \epsilon_1(1+\epsilon). \]
Let $\epsilon_2 = \epsilon - 2\epsilon_1(1+\epsilon) > 0$. Since $h = \lfloor (1+\epsilon)k \rfloor \leq 2k$, we conclude that for large $q$, 
\begin{align*}
|S_2(\mathbf{y}, h)| &= \frac{1}{h!}N_h^*(q) \geq \frac{q^{\lfloor \epsilon k\rfloor}}{(2k)^h} \geq q^{\lfloor \epsilon k \rfloor - \epsilon_1 h} \\
&\geq q^{(\epsilon - \epsilon_1(1+\epsilon))k-2} \geq q^{\epsilon_1(1+\epsilon)k + \epsilon_2k - 2} \geq q^{\epsilon_1(1+\epsilon)k}.
\end{align*}
This is subexponential in $q$, as $q$ is a polynomial in $k$.  
\end{proof}

This theorem proves that the explicitly constructed lattice $\mathcal{L}_{q,k}$ in $\mathbb{R}^q$ with the explicit center $\mathbf{y}$ is the desired locally dense lattice we seek, except that we have not considered the linear projection yet. To finish the full proof, we need to show that the projection of $S_2(\mathbf{y}, h)$ to the first $r$ coordinates is surjective onto $\{0, 1\}^r$ for suitable choices of $r$. This is completed in the next section. 

The above theorem already has an interesting application in coding theory. By Theorem \ref{loc} with $p=1$ and \cite[Lemma 5.1]{BP23}, we obtain the following corollary.

\begin{Cor}\label{ratio}  
Let $0 < \epsilon < 1$. Choose a positive constant $\epsilon_1$ and positive integers $k, h$ such that 
\[ 0 < \epsilon_1 < \frac{\epsilon}{2(1+\epsilon)} < \frac{1}{4}, \quad 2k = 2\lfloor \frac{q^{\epsilon_1}}{2}\rfloor, \quad h = \lfloor (1+\epsilon)k\rfloor. \]
For a vector $\mathbf{v} \in \mathbb{F}_q^q$, let $M_q(\mathbf{v}, k, h)$ denote the number of codewords in the Reed-Solomon code $RS_q(h-k+1)$ that each agree with $\mathbf{v}$ in at least $h$ coordinates, i.e.,
\[ M_q(\mathbf{v}, k, h) = \bigl| \{ \mathbf{w} \in RS_q(h-k+1) : d(\mathbf{v}, \mathbf{w}) \leq q-h \}\bigr|, \]
where $d(\mathbf{v}, \mathbf{w})$ denotes the Hamming distance.   
Then, for all sufficiently large prime $q$, there is an explicit center $\mathbf{v} \in \mathbb{F}_q^q$ satisfying   
\[ M_q(\mathbf{v}, k, h) \geq q^{\epsilon_1(1+\epsilon)k}, \]
which shows that $M_q(\mathbf{v}, k, h)$ is at least subexponential in $q$. 
\end{Cor}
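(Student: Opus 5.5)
The plan is to transfer the binary vectors in $S_2(\mathbf{y},h)$ from Theorem \ref{loc} into Reed-Solomon codewords of dimension $h-k+1$, using the duality between $\mathrm{RS}_q(k)$-syndromes and polynomial interpolation, which is the content of \cite[Lemma 5.1]{BP23}.

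\textbf{Step 1: the center $\mathbf{v}$.} By Lagrange interpolation there is a unique monic-type representation: the coset $\mathbf{y}+\mathcal{L}_{q,k}$ is determined by the power sums $h_1,\dots,h_{k-1}$ of the $h$-subset $\{a_1,\dots,a_h\}$. Consider an $h$-subset $T=\{a_{i_1},\dots,a_{i_h}\}$ and its polynomial $f_T(X)=\prod_{a\in T}(X-a)=X^h-e_1X^{h-1}+\dots+(-1)^he_h$. By Newton's identities, valid since $h<q$, the conditions
\[
\sum_{a\in T}a^j=h_j, \qquad 1\le j\le k-1,
\]
are equivalent to $e_1,\dots,e_{k-1}$ taking prescribed values $c_1,\dots,c_{k-1}$. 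These values are the elementary symmetric functions of $\{a_1,\dots,a_h\}$, and they are explicitly computable.

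Set $g(X)=X^h-c_1X^{h-1}+\dots+(-1)^{k-1}c_{k-1}X^{h-k+1}$. Then $T$ corresponds to an element of $S_2(\mathbf{y},h)$ exactly when $f_T=g-P_T$ for some polynomial $P_T$ of degree at most $h-k$, namely the lower part $P_T=-\sum_{j\ge k}(-1)^je_jX^{h-j}$. Define the explicit center
\[
\mathbf{v}=(g(a_1),\dots,g(a_q))\in\mathbb{F}_q^q .
\]
Since polynomials of degree at most $h-k$ evaluated on $\mathbb{F}_q$ form $RS_q(h-k+1)$, each $\mathbf{w}_T=(P_T(a_i))_i$ is a codeword.

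\textbf{Step 2: agreement and injectivity.} The codeword $\mathbf{w}_T$ agrees with $\mathbf{v}$ at coordinate $i$ exactly when $f_T(a_i)=0$, that is, when $a_i\in T$. This gives at least $h$ agreements, so $d(\mathbf{v},\mathbf{w}_T)\le q-h$. The map $T\mapsto\mathbf{w}_T$ is injective. Indeed, $P_T$ has degree at most $h-k<q$, so $P_T$ is determined by its evaluation vector; $P_T$ then determines $f_T$, and $f_T$ determines $T$. Hence
\[
M_q(\mathbf{v},k,h)\ge|S_2(\mathbf{y},h)|\ge q^{\epsilon_1(1+\epsilon)k}
\]
by Theorem \ref{loc}(2) with $p=1$.

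\textbf{Main obstacle.} The only real checks are the hypotheses of Theorem \ref{loc}: $h\le q/2$, which holds automatically for large $q$ since $h<q^{1/4}$, and the validity of Newton's identities in characteristic $q>h$. The bijection bookkeeping in Step 1 is the delicate point. One must confirm that the first $k-1$ power-sum constraints fix exactly the top $k-1$ nonleading coefficients, leaving $h-k+1$ free coefficients, which matches the dimension $h-k+1$ of the code.
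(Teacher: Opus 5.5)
Your proof is correct and is essentially the paper's argument: map each $h$-subset in $S_2(\mathbf{y},h)$ to its vanishing polynomial, use Newton's identities to see that the top $k-1$ non-leading coefficients are fixed, and read off codewords of degree at most $h-k$ that agree with the center on the $h$ roots. The only cosmetic difference is the center. You evaluate the truncated top part $g$ of $f_{\mathbf{y}}$, while the paper evaluates $f_{\mathbf{y}}$ itself; since $f_{\mathbf{y}}-g$ has degree at most $h-k$, the two centers differ by a codeword, so the arguments coincide.
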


\begin{proof}
Recall that $\mathbb{F}_q = \{ a_1, \dots, a_q \}$ with a fixed ordering. For every binary vector $\mathbf{x} = (x_1, \dots, x_q)^T \in S_2(\mathbf{y}, h)$, define the monic squarefree polynomial 
\[ f_{\mathbf{x}}(t) = \prod_{i=1}^q (t - a_i)^{x_i} \in \mathbb{F}_q[t], \]
of degree $h$ with $h$ distinct roots in $\mathbb{F}_q$. Since $H_q(k)\mathbf{x} = H_q(k)\mathbf{y}$, the $j$-th power symmetric function of the roots of $f_{\mathbf{x}}(t)$ is equal to the $j$-th power symmetric function of the roots of $f_{\mathbf{y}}(t)$ for every $1 \leq j \leq k-1$. By Newton's identities, the $j$-th elementary symmetric function of the roots of $f_{\mathbf{x}}(t)$ is equal to the $j$-th elementary symmetric function of the roots of $f_{\mathbf{y}}(t)$ for every $1 \leq j \leq k-1$.   

This means that the coefficient of $t^{h-j}$ in $f_{\mathbf{x}}(t)$ is equal to the coefficient of $t^{h-j}$ in $f_{\mathbf{y}}(t)$ for every $0 \leq j \leq k-1$. It follows that $f_{\mathbf{y}}(t) - f_{\mathbf{x}}(t) \in \mathbb{F}_q[t]$ is a polynomial of degree at most $h-k < q$, which uniquely represents a codeword in the Reed-Solomon code $RS_q(h-k+1)$ via evaluation at the set $\mathbb{F}_q$. Distinct choices of $\mathbf{x}$ yield distinct polynomials $f_{\mathbf{x}}(t)$, and hence a distinct codeword $f_{\mathbf{y}}(t) - f_{\mathbf{x}}(t)$ in the Reed-Solomon code $RS_q(h-k+1)$. 

The explicit center $\mathbf{v}$ is the word represented by $f_{\mathbf{y}}(t)$. It clearly agrees with the codeword  $f_{\mathbf{y}}(t) - f_{\mathbf{x}}(t)$ at all the $h$ roots of $f_{\mathbf{x}}(t)$ for every $x \in S_2(\mathbf{y}, h)$. Thus, their Hamming distance is at most $q-h$. By Theorem \ref{loc}, we conclude that 
\[ M_q(\mathbf{v}, k, h) \geq |S_2(\mathbf{y}, h)| > q^{\epsilon_1(1+\epsilon)k}. \]
\end{proof}

\begin{Rmk} 
In Corollary \ref{ratio}, the radius $q-h$ is smaller than the minimum distance $q+k-h$ of the Reed-Solomon code $RS_q(h-k+1)$, and the agreement-to-dimension ratio is at least 
\[ \frac{h}{h-k+1} = \frac{\lfloor (1+\epsilon)k\rfloor}{\lfloor \epsilon k\rfloor + 1} \sim \frac{1+\epsilon}{\epsilon}, \]
as $q$ grows. This limit can be made as large as one wishes as $\epsilon$ approaches zero. In contrast, the state of the art for explicit Reed-Solomon list-decoding configurations requires a ratio of $2 - \Omega(1)$ in order to obtain a super-polynomial list size; see \cite[Corollary 2]{GR05}.   
\end{Rmk}

\section{Linear Projections} 

In this final section, we assume that we are in the situation of Theorem \ref{loc}. To finish the full de-randomization, we now consider the linear projection to the first $r$ coordinates: 
\[ A : (x_1, \dots, x_q)^T \in S_2(\mathbf{y}, h) \to (x_1, \dots, x_r)^T \in \{0,1\}^r. \]
Note that the map $A$ is now restricted to the finite subset $S_2(\mathbf{y}, h)$ of $\mathbb{Z}^q$. We need to show that this projection map is surjective for 
\[ r = \lfloor q^{\delta} \rfloor \leq h - k -2 = \lfloor \epsilon k\rfloor -2 < \lfloor \epsilon k\rfloor \]
for some constant $0 < \delta < 1$ and all sufficiently large primes $q$. Then, its image contains $2^r = 2^{\lfloor q^{\delta} \rfloor}$ elements, which is subexponential in $q$. The proof of surjectivity is conceptually similar to the proof of Theorem \ref{loc}, but notationally and combinatorially more involved. We present the details below. 

Fix $x = (x_1, \dots, x_r)^T \in \{0,1\}^r$. Let $t$ be the Hamming weight of $x$, i.e., the number of non-zero entries in $x$. Thus, $0 \leq t \leq r$. The fiber $A^{-1}(x_1, \dots, x_r)^T$ in $S_2(\mathbf{y}, h)$ consists of all binary vectors $(x_1, \dots, x_r, x_{r+1}, \dots, x_q)^T$ of Hamming weight $h$ satisfying the following system of equations:
\[ \sum_{i=1}^q a_i^j x_i = h_j, \quad 1 \leq j \leq k-1. \]
Since the first $r$ coordinates $(x_1, \dots, x_r)$ are fixed, the fiber $A^{-1}(x_1, \dots, x_r)^T$ in $S_2(\mathbf{y}, h)$ can be identified with its last $q-r$ coordinates, which consist of a binary vector $(x_{r+1}, \dots, x_q)^T$ of Hamming weight $h-t$ satisfying:
\[ \sum_{i=r+1}^q a_i^j x_i = h_j - \sum_{i=1}^r a_i^j x_i, \quad 1 \leq j \leq k-1. \]
Let the non-zero coordinates in $(x_{r+1}, \dots, x_q)$ be $x_{e_1} = \dots = x_{e_{h-t}} = 1$, where 
\[ r+1 \leq e_1 < \dots < e_{h-t} \leq q. \] 
Then, we have   
\[ \sum_{i=1}^{h-t} a_{e_i}^j = h_j - \sum_{i=1}^r a_i^j x_i, \quad 1 \leq j \leq k-1. \]

\begin{Def} 
Fix $x = (x_1, \dots, x_r)^T \in \{0,1\}^r$ with Hamming weight $t$. Let $Z_x$ be the affine variety in $\mathbb{A}^{h-t}$ defined by the system 
\[ \sum_{i=1}^{h-t} z_i^j = h_j - \sum_{i=1}^r a_i^j x_i, \quad 1 \leq j \leq k-1. \]
Let $Z_x^*$ denote the subvariety of $Z_x$ consisting of all points with distinct coordinates. That is, 
\[ Z_x^* : =  Z_x \setminus \bigcup_{1 \leq i_1 < i_2 \leq h-t} \{ z_{i_1} - z_{i_2} = 0 \}. \]
Let $Z_x^{**}$ be the subvariety of $Z_x^*$ defined by requiring $z_i \notin \{a_1, \dots, a_r\}$ for all $1 \leq i \leq h-t$. That is,   
\[ Z_x^{**} : = Z_x^* \setminus \bigcup_{\substack{1 \leq i \leq h-t \\ 1 \leq e \leq r}} \{ z_i = a_e \}. \]
\end{Def}

Recall that $\mathbb{F}_q = \{a_1, \dots, a_q\}$ with a fixed ordering. For any $\mathbb{F}_q$-rational point $(z_1, \dots, z_{h-t})$ on $Z_x^{**}$ with distinct coordinates, since $z_i \notin \{a_1, \dots, a_r\}$, we can write uniquely 
\[ z_1 = a_{e_1}, \dots, z_{h-t} = a_{e_{h-t}}, \]
where $\{ e_1, \dots, e_{h-t} \}$ are distinct elements of $\{ r+1, \dots, q \}$. For $r+1 \leq j \leq q$, define $x_j = 1$ if $j = e_i$ for some $i$, and $x_j = 0$ otherwise. Then, 
\[ (x_1, \dots, x_r, x_{r+1}, \dots, x_q)^T \in S_2(\mathbf{y}, h) \]
with the projection property that 
\[ A (x_1, \dots, x_r, x_{r+1}, \dots, x_q)^T = (x_1, \dots, x_r)^T \in \{0, 1\}^r. \] 
Two $\mathbb{F}_q$-rational points $(z_1, \dots, z_{h-t})$ and $(z'_1, \dots, z'_{h-t})$ on $Z_x^{**}$ yield the same element in $S_2(\mathbf{y}, h)$ if and only if as sets, 
\[ \{z_1, \dots, z_{h-t}\} = \{z'_1, \dots, z'_{h-t}\}. \]
This occurs if and only if $(z_1, \dots, z_{h-t})$ and $(z'_1, \dots, z'_{h-t})$ are permutations of each other. This establishes the following proposition:
  
\begin{Prop} 
For $x = (x_1, \dots, x_r)^T \in \{0,1\}^r$ with $t$ being the number of non-zero entries in $x$, we have 
\begin{equation}\label{eq:fiber_bound}
|A^{-1}(x_1, \dots, x_r)^T| \geq \frac{1}{(h-t)!} |Z_x^{**}(\mathbb{F}_q)|.
\end{equation}
\end{Prop}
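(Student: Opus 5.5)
The plan is to build an explicit injection from the set of unordered point-sets of $Z_x^{**}(\mathbb{F}_q)$ into the fiber $A^{-1}(x_1,\dots,x_r)^T$, and then count fibers of the quotient map.

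\emph{Step 1: map points to binary vectors.} Given a point $(z_1,\dots,z_{h-t})\in Z_x^{**}(\mathbb{F}_q)$, define $\Phi(z)\in\{0,1\}^q$ as in the discussion preceding the statement. Keep the prescribed first $r$ coordinates $x_1,\dots,x_r$. For $j>r$, set the $j$-th coordinate to $1$ exactly when $a_j\in\{z_1,\dots,z_{h-t}\}$. The conditions defining $Z_x^{**}$ make this well defined with the right weight. The coordinates $z_i$ are distinct, and none equals $a_1,\dots,a_r$, so the $z_i$ occupy exactly $h-t$ distinct positions in $\{r+1,\dots,q\}$. Hence $\Phi(z)$ has Hamming weight $t+(h-t)=h$.

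\emph{Step 2: check membership in the coset.} The first coordinate of $H_q(k)\Phi(z)$ is $h$. For $1\le j\le k-1$, the $j$-th coordinate is
\[\sum_{i=1}^r a_i^j x_i+\sum_{i=1}^{h-t} z_i^j=h_j,\]
using the defining equations of $Z_x$. So $H_q(k)\Phi(z)=\mathbf{u}$, which gives $\Phi(z)\in S_2(\mathbf{y},h)$. By construction $A\Phi(z)=(x_1,\dots,x_r)^T$, so $\Phi(z)$ lies in the fiber.

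\emph{Step 3: count the fibers of $\Phi$.} We have $\Phi(z)=\Phi(z')$ if and only if $\{z_i\}=\{z'_i\}$ as sets. Since the coordinates are distinct, this holds if and only if $z'$ is a permutation of $z$. The symmetric group $S_{h-t}$ acts on $Z_x^{**}(\mathbb{F}_q)$ by permuting coordinates; this is legitimate because all defining conditions are symmetric. The action is free, again because the coordinates are distinct. So each fiber of $\Phi$ has exactly $(h-t)!$ elements, and therefore
\[|A^{-1}(x_1,\dots,x_r)^T|\ \ge\ |\Phi(Z_x^{**}(\mathbb{F}_q))|=\frac{|Z_x^{**}(\mathbb{F}_q)|}{(h-t)!}.\]

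No step is a real obstacle. The only care needed is to use the excluded values $a_1,\dots,a_r$ so that the positions of the $z_i$ do not collide with the prescribed first $r$ coordinates; this is exactly what keeps the weight equal to $h$ and the vector binary. The inequality, rather than an equality, is expected: fiber elements come only from points of $Z_x^{**}$, and we need only the lower bound.
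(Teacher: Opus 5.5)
Your proof is correct and is essentially the paper's argument: it also encodes each point of $Z_x^{**}(\mathbb{F}_q)$ as a binary vector in the fiber via the positions $e_i$ with $z_i=a_{e_i}$, and observes that two points give the same vector exactly when they are permutations of one another. One small correction to your closing remark: every element of the fiber arises this way, so the bound is in fact an equality.
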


We need to show that the left-hand side of \eqref{eq:fiber_bound} is strictly positive. For this, it suffices to show that the number $|Z_x^{**}(\mathbb{F}_q)|$ of $\mathbb{F}_q$-rational points on the variety $Z_x^{**}$ is positive. In analogy to Theorem \ref{loc}, we will demonstrate that $|Z_x^{**}(\mathbb{F}_q)|$ is subexponentially large in $q$ under suitable parameter selections. 
 
For $1 \leq i \leq h-t$ and $1 \leq e \leq r$, let 
\[ Z_{x, i, e}^* : = Z_x^* \cap \{ z_i = a_e \}, \quad Z_{x, i, e} : =  Z_x \cap \{ z_i = a_e \}. \]
By inclusion-exclusion sieving, we have 
\begin{equation}\label{eq:ineq_star}
|Z_x^{**}(\mathbb{F}_q)| \geq |Z_x^*(\mathbb{F}_q)| - \sum_{i=1}^{h-t} \sum_{e=1}^r |Z_{x, i, e}^*(\mathbb{F}_q)|.
\end{equation}
The variety $Z_x^*$ is structurally identical to the variety $X_{k, h-t, \mathbf{u}}^*$ in the affine space $\mathbb{A}^{h-t}$ defined previously, yielding $\dim(Z_x^*) = h-t-k+1$. Similarly, $Z_{x, i, e}^*$ is an open subset of $X_{k, h-1-t, \mathbf{u}}^*$ in the affine space $\mathbb{A}^{h-1-t}$, and $Z_{x, i, e}$ is isomorphic to $X_{k, h-1-t, \mathbf{u}}$ in $\mathbb{A}^{h-1-t}$ with $\dim(Z_{x, i, e}) = h-t-k$.

By our choice of $r$ with $r < h-k$, we have $k < h - r \leq h - t < q - 2$. Applying inequality \eqref{ineq1} and Proposition \ref{estimate1}, we deduce 
\begin{equation}
|Z_x^*(\mathbb{F}_q)| > q^{h-t-k} \left(q - \binom{h-t}{2}\right) - \binom{h-t}{2}(2k)^{h-t} q^{\frac{h-t-k+2}{2}},
\end{equation}
\begin{equation}
|Z_{x, i, e}^*(\mathbb{F}_q)| \leq |Z_{x, i, e}(\mathbb{F}_q)| \leq q^{h-t-k} + (2k)^{h-1-t} q^{\frac{h-t-k+1}{2}}.
\end{equation}
These bounds together with inequality \eqref{eq:ineq_star} imply 
\[ |Z_x^{**}(\mathbb{F}_q)| \geq q^{h-t-k} \left(q - \binom{h}{2} - rh\right) - \left(\binom{h}{2} + rh\right)(2k)^{h-t} q^{\frac{h-t-k+2}{2}}. \]
Now, select 
\begin{equation}\label{eq3}
\quad 0 < \epsilon_1 < \frac{\epsilon}{2(1+\epsilon)} < \frac{1}{4}, ~ 2k = 2\lfloor \frac{q^{\epsilon_1}}{2}\rfloor,  ~h = \lfloor (1+\epsilon)k \rfloor, \quad r < h-k \leq \frac{h-2}{2}. 
\end{equation}
The last inequality holds for sufficiently large $q$ because $\epsilon < \frac{1+\epsilon}{2}$ for $0 < \epsilon < 1$. 
Furthermore, 
\[ h < 2k \leq q^{\epsilon_1} \leq q^{1/4} <  \frac{1}{2}\sqrt{q}.\]
This implies 
\[ q > 4h^2 > 4\binom{h}{2} \geq 2\left(\binom{h}{2} + rh\right). \]
We thus obtain 
\begin{align*}
|Z_x^{**}(\mathbb{F}_q)| &\geq \left(\binom{h}{2} + rh\right) \left( q^{h-t-k} - (2k)^{h-t}q^{\frac{h-t-k+2}{2}}\right) \\
&= \left(\binom{h}{2} + rh\right) q^{\frac{\lfloor \epsilon k\rfloor-t+2}{2}} \left( q^{\frac{\lfloor \epsilon k\rfloor-t-2}{2}} - (2k)^{\lfloor(1+\epsilon)k\rfloor-t}\right).
\end{align*}
Choosing 
\begin{equation}\label{eq4}
r = \lfloor q^{\delta} \rfloor < \min\left(2 \left( \frac{\epsilon}{2} - \epsilon_1(1+\epsilon)\right)k - 5, \, \frac{\epsilon_1(1+\epsilon)}{4}k - 1\right).
\end{equation}
In particular, the rightmost term guarantees that 
\[ r < \frac{\epsilon_1(1+\epsilon)}{4}k - 1 < \frac{\epsilon}{8} k < \lfloor \epsilon k \rfloor = h-k. \]
Such a choice of $r$ with $0 < \delta < \epsilon_1$ is clearly viable for sufficiently large $q$. Recall that $0 \leq t \leq r$. We deduce 
\begin{align*}
 \frac{\lfloor \epsilon k \rfloor - t - 2}{2} - \epsilon_1 (\lfloor(1+\epsilon)k\rfloor - t) &\geq \left( \frac{\epsilon}{2} - \epsilon_1(1+\epsilon)\right)k - \frac{r+3}{2} \\ 
 &\geq \frac{r+5}{2} - \frac{r+3}{2} =1. 
\end{align*}
It follows that for large $q$, 
\begin{align*}
\frac{1}{2}q^{\frac{\lfloor \epsilon k\rfloor -t-2}{2}} - (2k)^{\lfloor (1+\epsilon)k\rfloor-t} &\geq q^{\epsilon_1 (\lfloor (1+\epsilon)k\rfloor - t)} \left( \frac{1}{2}q^{\frac{\lfloor \epsilon k \rfloor -t-2}{2} - \epsilon_1 (\lfloor (1+\epsilon)k\rfloor-t)} - 1\right) \\
&\geq q^{\epsilon_1 (\lfloor (1+\epsilon)k\rfloor - r)} \left(\frac{1}{2}q - 1\right) \geq q^{\epsilon_1 (\lfloor(1+\epsilon)k\rfloor - \lfloor \epsilon k\rfloor)} \geq 1.
\end{align*}
Thus, for sufficiently large $q$, 
\[ |Z_x^{**}(\mathbb{F}_q)| \geq q^{\frac{\lfloor \epsilon k \rfloor-t+2}{2}} \left(\binom{h}{2} + rh\right) \frac{1}{2} q^{\frac{\lfloor \epsilon k\rfloor-t-2}{2}} \geq q^{\lfloor \epsilon k\rfloor - t} \geq q^{\epsilon k - r - 1}. \]
Since $h-t \leq h \leq 2k$, $2k \leq q^{\epsilon_1}$, and $r < \frac{\epsilon_1(1+\epsilon)}{4}k - 1$, we conclude that for large $q$, 
\begin{align*}
|A^{-1}(x_1, \dots, x_r)^T| &\geq \frac{1}{(h-t)!} |Z_x^{**}(\mathbb{F}_q)| \geq \frac{q^{\epsilon k - r - 1}}{(2k)^h} \\
&\geq q^{\epsilon k - r - 1 - \epsilon_1 h} \geq q^{(\epsilon - \epsilon_1(1+\epsilon))k - r - 1} \\
&\geq q^{\epsilon_1(1+\epsilon)k - r - 1} \geq q^{\epsilon_1(1+\epsilon)k - \frac{\epsilon_1(1+\epsilon)}{4}k} \\
&= q^{\frac{3}{4}\epsilon_1(1+\epsilon)k}.
\end{align*}
This is subexponential in $q$, since $q$ is polynomial in $k$. In particular, the projection map $A : S_2(\mathbf{y}, h) \to \{ 0, 1\}^r$ is surjective with the above choices of $h = \lfloor(1+\epsilon)k\rfloor$, $r = \lfloor q^{\delta}\rfloor$, and $2k = 2\lfloor \frac{q^{\epsilon_1}}{2}\rfloor$ as defined in \eqref{eq3} and \eqref{eq4}. 

In summary, we have established the following theorem on locally dense lattices:

\begin{Thm}\label{thm-final} 
 Let $0 < \epsilon < 1$ be a constant. Choose positive constants  $\delta$ and $\epsilon_1$ such that 
\[ 0 < \delta < \epsilon_1 < \frac{\epsilon}{2(1+\epsilon)} < \frac{1}{4}.\]
For all sufficiently large prime $q$, define positive integers $k, h, r$ by 
\[\quad 2k = 2\lfloor \frac{q^{\epsilon_1}}{2}\rfloor, \quad h = \lfloor (1+\epsilon)k\rfloor,  \quad r = \lfloor q^{\delta} \rfloor.\]
For $1 \leq p < \infty$ and $\alpha = \left(\frac{1+\epsilon}{2}\right)^{1/p}$, the Reed-Solomon lattice $\mathcal{L}_{q,k}$ in $\mathbb{R}^q$ is a $(p, \alpha, r, q)$-locally dense lattice satisfying the following two properties:
\begin{enumerate}
\item[\rm (1)] $\lambda^{(p)}(\mathcal{L}) \geq (2k)^{1/p}$, and
\item[\rm (2)] Let $\mathbf{y} = (1, \dots, 1, 0, \dots, 0)^T$ be the binary vector in $\mathbb{Z}^q$ with Hamming weight $h$. Let $V : =  (\mathbf{y} + \mathcal{L}_{q,k}) \cap B(0, \alpha (2k)^{1/p})$ be the set of all vectors in the coset $\mathbf{y} + \mathcal{L}_{q, k}$ with $\ell_p$-norm at most $\alpha (2k)^{1/p}$. Then, 
\[ \{ 0, 1\}^r \subseteq A(V) : = \{(x_1, \dots, x_r) : (x_1, \dots, x_q) \in V\}, \]
where $A$ is the projection to the first $r$ coordinates. 
\end{enumerate}
\end{Thm}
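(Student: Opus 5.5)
The plan is to assemble Theorem \ref{thm-final} from the pieces already established, verifying only that the parameter choices satisfy every hypothesis used along the way. Property (1) follows from Lemma \ref{lem:BP-min-dist}. For large $q$ we have $k \leq h \leq 2k \leq q^{\epsilon_1} < q/2$, so the lemma gives $\lambda^{(p)}(\mathcal{L}_{q,k}) \geq (2k)^{1/p}$. This is also Condition (1) of Definition \ref{def:locally-dense} with $\ell = 2k$.

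For property (2), I would first use Proposition \ref{prop:V-contains-S2} to get $S_2(\mathbf{y},h) \subseteq V$. This holds because each binary vector of weight $h$ in the coset has $\ell_p$-norm $h^{1/p} \leq ((1+\epsilon)k)^{1/p} = \alpha(2k)^{1/p}$. It therefore suffices to show that the projection $A$ restricted to $S_2(\mathbf{y},h)$ already hits every $x\in\{0,1\}^r$.

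For a fixed $x$ of weight $t \le r$, I would use the fiber bound \eqref{eq:fiber_bound}, $|A^{-1}(x)| \geq |Z_x^{**}(\mathbb{F}_q)|/(h-t)!$, and then the sieve \eqref{eq:ineq_star}. The sieve reduces the task to the lower bound for $|Z_x^*(\mathbb{F}_q)|$ and the upper bounds for $|Z_{x,i,e}(\mathbb{F}_q)|$. These come from \eqref{ineq1} and Propositions \ref{estimate1} and \ref{estimate2}, applied to $X_{k,h-t,\mathbf{u}'}$ and $X_{k,h-t-1,\mathbf{u}'}$ with the shifted right-hand side $\mathbf{u}'$. Those results hold for arbitrary right-hand sides, since the geometry in Section 4 (complete intersection, isolated singularities) never used the specific values $h_j$. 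Their hypotheses require $2 \le k < h-t-1 < q-2$. This needs $r < h-k-1$, which follows from $r = \lfloor q^{\delta}\rfloor$ with $\delta < \epsilon_1$: then $r = o(k)$, while $h-k = \lfloor \epsilon k\rfloor$ grows like $q^{\epsilon_1}$.

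The main obstacle is the uniform exponent bookkeeping. I must show that $q^{h-t-k}(q - \binom{h}{2} - rh)$ dominates $(\binom{h}{2}+rh)(2k)^{h-t}q^{(h-t-k+2)/2}$ simultaneously for all $0\le t\le r$. I would do this by checking that
\[ \tfrac{\lfloor\epsilon k\rfloor - t - 2}{2} - \epsilon_1(h-t) \geq \bigl(\tfrac{\epsilon}{2} - \epsilon_1(1+\epsilon)\bigr)k - \tfrac{r+3}{2} \geq 1, \]
which holds because $r = q^{\delta} = o(k)$ while the coefficient $\frac{\epsilon}{2}-\epsilon_1(1+\epsilon)$ is a positive constant. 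The side conditions \eqref{eq4} on $r$ then hold automatically for large $q$, since $\delta<\epsilon_1$.

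This gives $|Z_x^{**}(\mathbb{F}_q)| \ge q^{\lfloor \epsilon k\rfloor - t}$. Dividing by $(h-t)! \le (2k)^h \le q^{\epsilon_1(1+\epsilon)k}$ leaves a positive power of $q$, so every fiber is nonempty. Hence $\{0,1\}^r \subseteq A(S_2(\mathbf{y},h)) \subseteq A(V)$. All parameters $k,h,r,\mathbf{y}$ and the basis $B_{q,k}$ are computable in $\mathrm{poly}(q)$ time. Since $r \sim q^{\delta}$, we have $q = \mathrm{poly}(r)$, so the construction meets the requirements of Theorem \ref{thm:reduction}.
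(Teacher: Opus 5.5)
Your proposal is correct and follows essentially the same route as the paper: property (1) via Lemma~\ref{lem:BP-min-dist}, then $S_2(\mathbf{y},h)\subseteq V$, the fiber bound \eqref{eq:fiber_bound}, and the sieve \eqref{eq:ineq_star} fed by \eqref{ineq1} and Proposition~\ref{estimate1} with shifted right-hand sides. It also uses the same uniform exponent check $\frac{\lfloor\epsilon k\rfloor-t-2}{2}-\epsilon_1(h-t)\ge 1$ based on $r=o(k)$; your remark that the Section~4 geometry does not depend on the right-hand-side values is exactly what justifies the paper's claim that $Z_x^*$ is ``structurally identical'' to $X^*_{k,h-t,\mathbf{u}}$.
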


Note that the condition $\delta < \epsilon_1$ automatically implies that the following condition 
\[ r = \lfloor q^{\delta} \rfloor < \min\left(2 \left( \frac{\epsilon}{2} - \epsilon_1(1+\epsilon)\right)k - 5, \, \frac{\epsilon_1(1+\epsilon)}{4}k - 1\right) \]
is satisfied for all sufficiently large $q$.  This is the condition in (\ref{eq4}). 
The locally dense lattice $\mathcal{L}_{q,k}$ in $\mathbb{R}^q$ is constructed deterministically in $\mathrm{poly}(q) = \mathrm{poly}(r)$ time. 

We now apply Theorem \ref{thm:reduction} to finish the proof of the derandomization. 
Let $1\leq p < \infty$. Let $r_0$ be a sufficiently large integer and $\alpha \in (2^{-1/p}, 1)$. Set 
$\epsilon = 2\alpha^p -1 >0$ and choose positive constants $\delta$ and $\epsilon_1$ as in Theorem \ref{thm-final}. Choose a poly($r_0$)-bounded prime $q > r_0^{1/\delta}$, i.e., $q^{\delta} > r_0$.  
Such a prime $q$ exists by Bertrand's theorem and can be constructed in deterministic poly($r_0$) time 
by scanning through the polynomial-size interval and using a deterministic polynomial time primality test algorithm. Let 
\[r = \lfloor q^{\delta}\rfloor, \quad 2k = 2\lfloor \frac{q^{\epsilon_1}}{2}\rfloor, \quad h = \lfloor (1+\epsilon)k\rfloor.\] 
It is clear that $r\geq r_0$. Theorem \ref{thm-final} shows that the 
lattice $\mathcal{L}_{q,k}$ in $\mathbb{R}^q$ is a $(p, \alpha, r, q)$-locally dense lattice, and it is constructed in deterministic poly($r$)-time. 
The surjective projection  $\{ 0, 1\}^r$ to the first $r_0$-coordinates $\{ 0, 1\}^{r_0}$ shows that 
$\mathcal{L}_{q,k}$ in $\mathbb{R}^q$ is also a $(p, \alpha, r_0, q)$-locally dense lattice constructed in deterministic poly($r_0$)-time. 
Theorem \ref{thm:reduction} applies. Since we can take $\alpha$ arbitrarily close to $2^{-1/p}$,  
we deduce the following main hardness result:

\begin{Thm} 
For every $1\leq p <\infty$, $\gamma\text{-}\mathrm{GapSVP}_p$ is $\mathbf{NP}$-hard for all $1 \leq \gamma < 2^{1/p}$. 
\end{Thm}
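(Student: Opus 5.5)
The plan is to combine Theorem \ref{thm-final} with the reduction of Theorem \ref{thm:reduction}. Fix $1\leq p<\infty$ and a target factor $1\leq \gamma<2^{1/p}$. First choose $\alpha\in(2^{-1/p},1)$ with $\gamma<1/\alpha$. This is possible because $1/\alpha$ tends to $2^{1/p}$ as $\alpha$ decreases to $2^{-1/p}$; concretely, take $\alpha$ strictly between $2^{-1/p}$ and $\min(1,1/\gamma)$. When $\gamma=1$, any $\alpha<1$ works. Then set $\epsilon=2\alpha^p-1\in(0,1)$, so that $\alpha=((1+\epsilon)/2)^{1/p}$. Next fix constants $0<\delta<\epsilon_1<\epsilon/(2(1+\epsilon))$, exactly as Theorem \ref{thm-final} requires.

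The second step is to exhibit a deterministic $\mathrm{poly}(r_0)$-time algorithm that, on input a sufficiently large $r_0$, outputs a $(p,\alpha,r_0,\mathrm{poly}(r_0))$-locally dense lattice.

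\begin{enumerate}
\item Search for the least prime $q$ with $q^\delta>r_0$. By Bertrand's postulate there is a prime in $(N,2N]$ with $N=\lceil r_0^{1/\delta}\rceil$. This interval has polynomial size in $r_0$, since $\delta$ is a fixed constant, and AKS primality testing makes the scan deterministic polynomial time.
\item Set $k=\lfloor q^{\epsilon_1}/2\rfloor$, $h=\lfloor(1+\epsilon)k\rfloor$ and $r=\lfloor q^\delta\rfloor\geq r_0$.
\item Output the basis $B_{q,k}$. It is computed in $\mathrm{poly}(q)$ time by inverting the Vandermonde block $V$ over $\FF_q$ and lifting $V^{-1}W$.
\item Output $\ell=2k$, the shift $\mathbf{y}$, and $A=[I_{r_0}\mid 0]$.
\end{enumerate}

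Because $r_0$ is large, $q$ is large, so Theorem \ref{thm-final} applies. It gives property (1), $\lambda^{(p)}\geq(2k)^{1/p}=\ell^{1/p}$, and also $\{0,1\}^r\subseteq A_r(V)$ for the projection $A_r$ onto the first $r$ coordinates. Composing $A_r$ with the further projection onto the first $r_0\leq r$ coordinates is surjective from $\{0,1\}^r$ onto $\{0,1\}^{r_0}$. Hence $\{0,1\}^{r_0}\subseteq A(V)$, and this is property (2) for $r_0$. The dimension is $q=O(r_0^{1/\delta})=\mathrm{poly}(r_0)$.

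Finally, invoke Theorem \ref{thm:reduction}, which yields $\mathbf{NP}$-hardness of $\gamma'$-GapSVP$_p$ for every $\gamma'<1/\alpha$, and in particular for $\gamma$. The theorem allows us to discard finitely many small $r_0$: for those we can hardcode or brute-force outputs, or simply note that the reduction only needs large $r_0$.

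The main obstacle is not conceptual, since the heavy lifting (the Deligne estimates and sieving) is in Theorem \ref{thm-final}. It is bookkeeping: confirming that "sufficiently large $q$" in Theorem \ref{thm-final} is effective, so the algorithm is genuinely uniform. One must also check that the size of every integer entry in $B_{q,k}$ and $\mathbf{y}$ is polynomial in $r_0$, and that the intervals to scan for $q$ stay polynomial in length.
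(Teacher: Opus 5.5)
Your proposal is correct and matches the paper's argument step for step. You pick $\alpha\in(2^{-1/p},1)$ with $\gamma<1/\alpha$, set $\epsilon=2\alpha^p-1$ and $0<\delta<\epsilon_1<\epsilon/(2(1+\epsilon))$, find a polynomially bounded prime $q>r_0^{1/\delta}$ via Bertrand's postulate and deterministic primality testing, apply Theorem \ref{thm-final}, project $\{0,1\}^r$ onto $\{0,1\}^{r_0}$, and conclude with Theorem \ref{thm:reduction}; your explicit choice of $\alpha$ below $\min(1,1/\gamma)$ simply spells out the paper's phrase ``take $\alpha$ arbitrarily close to $2^{-1/p}$.''
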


\end{document}